\let\cal=\mathcal
\def\N{{\mathbb N}}
\def\R{{\mathbb R}}
\newtheorem{thm}{Theorem}[section]
\newtheorem{cor}[thm]{Corollary}
\newtheorem{lem}[thm]{Lemma}
\newtheorem{prop}[thm]{Proposition}
\theoremstyle{definition}
\newtheorem{de}[thm]{Definition}
\theoremstyle{remark}
\newtheorem{rem}[thm]{Remark}
\numberwithin{equation}{section}
\newcommand{\rmd}{{\rm d}}
\newcommand{\rme}{{\rm e}}
\begin{document}

\title[Favard separation method for a.p. SDEs]{Favard separation method for almost periodic stochastic differential equations}

\author{Zhenxin Liu}
\address{Z. Liu: School of Mathematics,
Jilin University, Changchun 130012, P. R. China and School of Mathematical Sciences,
Dalian University of Technology, Dalian 116024, P. R. China}
\email{zxliu@jlu.edu.cn; zxliu@dlut.edu.cn}

\author{Wenhe Wang}
\address{W. Wang: School of Mathematics,
Jilin University, Changchun 130012, P. R. China}
\email{whwang13@mails.jlu.edu.cn}

\thanks{This work is partially supported by NSFC Grants 11271151, 11522104, and the startup and Xinghai
Youqing funds from Dalian University of Technology.}

\date{October 20, 2015}

\subjclass[2010]{60H10, 34C27, 42A75}  

\keywords{Stochastic differential equation, Almost periodic solution, Favard separation, Amerio separation}

\begin{abstract}
Favard separation method is an important means to study almost periodic solutions to
linear differential equations; later, Amerio applied Favard's idea to nonlinear differential equations.
In this paper, by appropriate choosing separation and almost periodicity in distribution sense, we obtain
the Favard and Amerio type theorems for stochastic differential equations.
\end{abstract}

\maketitle

\section{Introduction}

The theory of almost periodic functions was founded by Bohr in 1924--1926 \cite{B1,B2,B3}, and many significant contributions were made to the subject
in the immediate decade following Bohr's work; see, for example, Bochner \cite{Boch27,Boch33}, von Neumann \cite{N}, van Kampen \cite{K}.
In the early stage of the theory, much attention was paid to the Fourier series theory of almost periodicity. Later it was observed that many differential equations
arising from physics admit almost periodic solutions, then almost periodic phenomenon
was extensively studied in differential equations, following Favard's pioneering work \cite{F,F33}; we refer the reader to the books,
e.g. Amerio and Prouse \cite{AP}, Fink \cite{Fink74}, Levitan and Zhikov \cite{LZ},
Yoshizawa \cite{Y75} etc, for the survey.

We know that the white noise perturbations have the effect of mixing and averaging, so what will happen when the almost periodic equation in consideration is perturbed
by white noise? In this situation, the almost periodic phenomenon was studied in stochastic differential equations. To the best of our knowledge,
only the fixed point method was used so far to investigate the existence of almost periodic solutions by assuming that
the linear part of the equation admits the exponential dichotomy; see Halanay \cite{Hal}, Morozan and Tudor \cite{MT}, 
Da Prato and Tudor \cite{DT}, and Arnold and Tudor \cite{AT}, among others.


In this paper, we aim to adopt the Favard separation method to study almost periodic solutions for stochastic differential equations.
The separation method goes back to Favard \cite{F} for linear equations.
Consider\footnote{To compare the results for periodic solutions, we
reverse somewhat the history of the subject.} the linear equation on $\R^d$
\begin{equation}\label{LE}
\dot x=A(t)x + f(t).
\end{equation}
If $A$ and $f$ are periodic with common period, the classical Massera criterion \cite{M} states that \eqref{LE} admits a periodic solution
with the same period if and only if it admits a bounded solution. When $A$ and $f$ are almost periodic, the situation is
more complicated. When $A$ is a constant matrix and $f$ is almost periodic in \eqref{LE}, Bohr and Neugebauer \cite{BN} proved that
a solution of \eqref{LE} is almost periodic if and only if it is bounded. But in the general case,
the existence of bounded solutions of \eqref{LE} does not imply the existence of almost periodic ones;
see counterexamples given by Zhikov and Levitan \cite{ZL}, Johnson \cite{Joh},
and the more recent work of Ortega and Tarallo \cite{OT} which unifies the situations of \cite{ZL,Joh}. Assuming the existence of bounded solutions,
Favard \cite{F} proved that \eqref{LE} admits an almost periodic solution if the so-called
{\it Favard separation condition} holds, which means that, for any $B\in H(A)$, each nontrivial bounded solution $x(t)$ of the equation
\begin{equation*}\label{hullb}
\dot x=B(t)x,
\end{equation*}
satisfies
\[
\inf_{t\in\mathbb R} |x(t)| >0.
\]
Here the hull $H(A)$ of $A$ is defined as follows
\[
H(A)= {\rm cl} \{A_\tau: \tau\in\R\}
\]
with $A_\tau(\cdot)=A(\tau+\cdot)$ and the closure being taken under the uniform topology. The Favard separation condition is optimal in some sense
since all the counterexamples we know so far (e.g. the works mentioned above) fail to obey it.  The Favard separation condition was extensively studied in
the literature in various situations. In particular, Amerio \cite{A} applied Favard's idea to nonlinear differential equations to study
almost periodic solutions. Later, Seifert \cite{S65} proposed a kind of separation, which is equivalent
to almost periodicity, to study the almost periodic solutions
of nonlinear equations. Fink \cite{Fink72} generalized separation conditions of \cite{A,S65} to semi-separation ones.

What we mainly concern in the present paper is the existence of almost periodic in distribution solutions to
stochastically perturbed differential equations under
the Favard or Amerio type separation condition. For instance, when \eqref{LE} is perturbed by small white noise:
\[
\rmd X = (A(t)X+f(t)) \rmd t + \epsilon \rmd W,
\]
does it admit almost periodic solutions in some sense if the unperturbed equation admits bounded solutions and satisfies the Favard separation condition?
To this interesting question, the answer is positive. Actually, we can obtain more general result than this; see the following
Favard type theorem.

\medskip
\noindent {\bf Theorem A.} {\em Consider the It\^o stochastic differential equation on $\R^d$
\begin{equation}\label{SLE}
\rmd X = (A(t)X+f(t)) \rmd t + \sum_{i=1}^m(B_i(t)X + g_i(t)) \rmd W_i,
\end{equation}
where $A, B_1,\ldots, B_m$ are $(d\times d)$-matrix-valued and $f,g_1,\ldots,g_m$ are $\R^d$-valued;
all of these functions are almost periodic; $W=(W_1,\ldots,W_m)$ is a standard $m$-dimensional Brownian motion.
Assume that \eqref{SLE} admits an $L^2$-bounded solution $X$, i.e. $\sup_{t\in\R}E|X(t)|^2<\infty$, and that the
Favard separation condition holds for \eqref{SLE}. Then \eqref{SLE} admits an almost periodic in distribution solution.}
\medskip

For nonlinear stochastic differential equations, we have the following Amerio type theorem.

\medskip
\noindent {\bf Theorem B.} {\em Consider the It\^o stochastic differential equation on $\R^d$
\begin{equation}\label{SNE}
\rmd X = f(t,X) \rmd t + g(t,X) \rmd W,
\end{equation}
where $f(t,x)$ is an $\R^d$-valued uniformly almost periodic function, $g(t,x)$ is a $(d\times m)$-matrix-valued
uniformly almost periodic function, and $W$ is a standard $m$-dimensional Brownian motion. Assume that
$f$ and $g$ are globally Lipschitz in $x$ with Lipschitz constants independent of $t$, and that
the Amerio semi-separation condition holds for \eqref{SNE} in $\cal D_r$ for some $r>0$. Then all the $L^2$-bounded solutions
of \eqref{SNE}, with $\sup_{t\in\R}E|X(t)|^2\le r^2$, are
almost periodic in distribution.}
\medskip

For the Favard separation condition for \eqref{SLE} and the Amerio semi-separation condition for \eqref{SNE} as well as
the meaning of $\cal D_r$, we refer the reader to Section 2 for details.

Besides the above Theorems A and B, we also obtain a result for linear stochastic equations,
which suggests the existence of non-minimal almost periodic in distribution solutions (see Theorem \ref{non-sepa} for details),
a result for nonlinear stochastic equations which weakens, in some sense, the Amerio semi-separation condition in
Theorem B (see Theorem \ref{Amerio-inherit} for details), and a result which reduces the existence of $L^2$-bounded
solutions for \eqref{SNE}, hence also for \eqref{SLE}, on the whole real line to that on the positive real line
(see Theorem \ref{L2exis} for details).

The paper is organized as follows. Section 2 is a preliminary
section in which we mainly review some fundamental properties of
almost periodic functions and introduce separation conditions for stochastic differential equations.
In Section 3, we study almost periodic solutions for linear stochastic equations under the
Favard type separation condition. In Section 4, we investigate
almost periodic solutions for nonlinear stochastic equations under the
Amerio type semi-separation condition. In Section 5, we illustrate
our results by some applications. Finally, we discuss, in Section 6, the possibility of improving
some results of Sections 3--5, i.e. we can obtain almost periodicity of solutions in
distribution sense on the path space.

Throughout the paper, we use $\R$ to denote the set of real numbers, and $\R_-=(-\infty,0]$, $\R_+=[0,+\infty)$;
we use the same symbol $|\,\cdot|$ to denote the absolute value of a number, the
Euclidian norm of a vector and the induced norm of a matrix, and the cardinality of a set; we denote by $B_r$ the
closed ball in $\R^d$ with radius $r$ centered at the origin.

\section{Preliminary}

Through this section, we assume that $(M,d)$ is a complete metric space.

\subsection{Almost periodic functions}

\begin{de}[Bohr \cite{B1}]
A continuous function $f:\R\to M$ is called {\em (Bohr) almost periodic} if for any given $\epsilon >0$,
the set
\[
T(\epsilon,f):=\{\tau\in\R:  \sup_{t\in\R} d(f(t+\tau),f(t)) <\epsilon\}
\]
is {\em relatively dense} on $\R$, i.e. there is a number $l=l(\epsilon)>0$ such that $(a, a+l)\cap T(\epsilon,f)\neq \emptyset$ for any $a\in\R$.
The set $T(\epsilon,f)$ is called the {\em set of $\epsilon$-almost periods of $f$}.
\end{de}

\begin{rem}\label{apbounded}
For given almost periodic function $f:\R\to M$, it is not hard to prove that $f$ is uniformly continuous on $\R$ and the
range $R(f)$ of $f$ is precompact, i.e. the closure of $R(f)$ is compact; see,  e.g. \cite[page 2]{LZ}.
\end{rem}

For simplicity, we follow Bochner's notation \cite{Boch62}. We denote a sequence of real numbers $\{\alpha_n\}$ by $\alpha$.
By $\alpha\subset \beta$ we mean $\alpha$ is a subsequence of $\beta$; $-\alpha$ means $\{-\alpha_n\}$; $\alpha >0$ means $\alpha_n >0$ for each $n$;
$\{\alpha+\beta\}$ means $\{\alpha_n+\beta_n\}$; $\alpha$ and $\beta$ being common subsequences of $\alpha'$ and $\beta'$ means
that $\alpha_k=\alpha'_{n(k)}$ and $\beta_k=\beta'_{n(k)}$ for the same function $n(k)$. The notation $T_\alpha f=g$ means
$g(t)=\lim_{n\to\infty}f(t+\alpha_n)$ and is written only when the limit exists; the mode of convergence will be specified
at each time when the notation is used.

The following definition of almost periodicity is due to Bochner \cite{Boch27}.

\begin{de}
A continuous function $f:\R\to M$ is called {\em (Bochner) almost periodic} if for any sequence $\alpha'$, there
exists a subsequence $\alpha\subset \alpha'$ such that $T_\alpha f$ exists uniformly on $\R$.
\end{de}

\begin{prop}[Bochner \cite{Boch27,Boch62}] \label{ap-p}
For a given continuous function $f:\R\to M$, the following statements are equivalent.
\begin{itemize}
\item[{\rm (i)}] The function $f$ is Bohr almost periodic.
\item[{\rm (ii)}] The function $f$ is Bochner almost periodic.
\item[{\rm (iii)}] For every pair of sequences $\alpha'$ and $\beta'$, there are common subsequences $\alpha\subset\alpha'$
   and $\beta\subset\beta'$ such that
\[
T_{\alpha+\beta} f = T_\alpha T_\beta f \quad \hbox{pointwise}.
\]
\item[{\rm (iv)}] For every pair of sequences $\alpha'$ and $\beta'$, there are common subsequences $\alpha\subset\alpha'$
   and $\beta\subset\beta'$ such that
\[
T_{\alpha+\beta} f = T_\alpha T_\beta f \quad \hbox{uniformly on }\R.
\]
\end{itemize}
\end{prop}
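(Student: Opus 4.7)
The plan is to close the cycle (i) $\Rightarrow$ (ii) $\Rightarrow$ (iv) $\Rightarrow$ (iii) $\Rightarrow$ (i), with (iv) $\Rightarrow$ (iii) being automatic. I would begin with (i) $\Rightarrow$ (ii), invoking Remark \ref{apbounded}: uniform continuity of $f$ and precompactness of $R(f)$ make the family $\{f(\cdot+\alpha'_n)\}$ equicontinuous and pointwise relatively compact, so a standard Arzel\`a--Ascoli diagonal extraction yields a subsequence converging uniformly on every compact interval $[-k,k]$. To upgrade this to uniform convergence on all of $\R$, I would use the relative density of $T(\epsilon,f)$: given $\epsilon>0$, any $t\in\R$ decomposes as $t=\tau+s$ with $|s|\le L(\epsilon)$ and $\tau\in T(\epsilon,f)$, so a uniform Cauchy estimate on $[-L(\epsilon),L(\epsilon)]$ propagates, at the cost of $2\epsilon$, to one on all of $\R$.

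For (ii) $\Rightarrow$ (iv), given $\alpha',\beta'$ I would first extract a subsequence of $\beta'$ along which $g:=T_\beta f$ converges uniformly. Since translates and uniform limits of Bochner a.p.\ functions are again Bochner a.p.\ (a direct check from the definition), $g$ is itself Bochner a.p., so from $\alpha'$ indexed along $\beta$ we can extract a common refinement making both $T_\alpha g$ and $T_{\alpha+\beta}f$ exist uniformly. The agreement $T_{\alpha+\beta}f=T_\alpha g$ then follows from
\[
d(f(t+\alpha_n+\beta_n),T_\alpha g(t)) \le d(f(t+\alpha_n+\beta_n),g(t+\alpha_n)) + d(g(t+\alpha_n),T_\alpha g(t)),
\]
whose first term vanishes uniformly in $t$ by $T_\beta f\to g$ uniformly. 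The complementary direction (ii) $\Rightarrow$ (i) is a standard contradiction argument: if $T(\epsilon,f)$ failed to be relatively dense, one would produce $\{\alpha'_n\}$ with $\alpha'_n-\alpha'_m\notin T(\epsilon,f)$, hence $\sup_t d(f(t+\alpha'_n),f(t+\alpha'_m))\ge\epsilon$ for all $n\ne m$, precluding any uniformly Cauchy subsequence of $\{f(\cdot+\alpha'_n)\}$.

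The main obstacle is (iii) $\Rightarrow$ (i), the heart of Bochner's equivalence theorem: at this stage neither uniform continuity of $f$ nor precompactness of $R(f)$ is available, since these are consequences of (i) rather than hypotheses. I would argue by contradiction: assuming $T(\epsilon,f)$ fails relative density for some $\epsilon>0$, build a sequence $\{\alpha'_n\}$ whose pairwise differences lie outside $T(\epsilon,f)$, and for each $n>m$ pick a witness $t_{n,m}\in\R$ realizing $d(f(t_{n,m}+\alpha'_n-\alpha'_m),f(t_{n,m}))\ge\epsilon$. Applying (iii) to $\alpha'=\{\alpha'_n\}$ and $\beta'=\{-\alpha'_k\}$ and passing to common subsequences, the pointwise identity $T_{\alpha+\beta}f(t)=T_\alpha T_\beta f(t)$ at carefully chosen test points forces the simultaneous-limit value to agree with the iterated-limit value, contradicting the $\epsilon$-separation engineered into $\alpha'_n-\alpha'_m$. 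The delicate bookkeeping step is arranging the common-subsequence indexing so that both limits genuinely exist at the chosen test points; this is the technical core of the argument, and the only place where the pure double-limit hypothesis (iii) has to do real work.
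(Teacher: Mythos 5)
The paper does not prove Proposition \ref{ap-p} at all: it is quoted as Bochner's classical equivalence theorem with a citation to \cite{Boch27,Boch62}, so the only issue is whether your blind argument is complete. Your implications (i) $\Rightarrow$ (ii), (ii) $\Rightarrow$ (iv) and (iv) $\Rightarrow$ (iii) are correct and use the standard devices (Arzel\`a--Ascoli on compacts plus relative density of $T(\epsilon,f)$ to globalize the convergence; refinement to common subsequences plus the triangle inequality). The genuine gap is exactly the step you flag: (iii) $\Rightarrow$ (i) is described but not proved, and the mechanism you sketch does not work as stated. Evaluating the pointwise identity $T_{\alpha+\beta}f=T_\alpha T_\beta f$ at ``carefully chosen test points'' cannot contradict the separation $\sup_t d\bigl(f(t+\alpha'_n-\alpha'_m),f(t)\bigr)\ge\epsilon$: the witnesses $t_{n,m}$ vary with $(n,m)$, whereas in (iii) the test point is frozen before the limits in $n$ are taken; and the particular choice $\beta'=\{-\alpha'_k\}$ only produces an identity of the type $T_\alpha T_{-\alpha}f=f$ pointwise, which is an almost-automorphy condition and is strictly weaker than almost periodicity. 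So the cycle is not closed.

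The classical way to close it is to prove (iii) $\Rightarrow$ (ii) and then invoke the (ii) $\Rightarrow$ (i) contradiction argument you already mention. Given any sequence $h'$, use (iii) together with a diagonal extraction to find $h\subset h'$ with $T_h f=g$ pointwise. If no further subsequence converges uniformly, there are $\epsilon>0$, indices $n_k\to\infty$ and witness points $s_k$ with $d\bigl(f(s_k+h_{n_k}),g(s_k)\bigr)\ge\epsilon$. Now apply (iii) to the pair $\alpha'=\{s_k\}$, $\beta'=\{h_{n_k}\}$: along common subsequences $T_\beta f=g$ still holds pointwise, and evaluating $T_{\alpha+\beta}f=T_\alpha T_\beta f$ at the single point $t=0$ forces $f(s_k+h_{n_k})$ and $g(s_k)$ to converge to the same limit, contradicting the $\epsilon$-separation. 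The essential trick your sketch misses is that the witness points must be fed in as one of the two translation sequences, not used as test points of the pointwise identity. With that repair your plan closes correctly and coincides with the argument of Bochner \cite{Boch62}; see also \cite{Fink74} or \cite{LZ}.
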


\begin{rem}
Since Bohr's almost periodicity is equivalent to Bochner's by the above result, we
will just call them almost periodicity in what follows.
\end{rem}

To study almost periodic solutions of differential equations, we need to consider uniformly almost periodic functions.

\begin{de}[Yoshizawa \cite{Y75}]\label{uap}
Let $D\subset\R^d$ be an open set. A continuous function $f:\mathbb{R}\times D\rightarrow\mathbb{R}^d$ is called {\em almost periodic in $t$
uniformly for $x\in D$} if for any $\epsilon>0$ and any compact set $S\subset D$, the set
\[
T(\epsilon,f,S):=\{\tau\in\R: \sup_{(t,x)\in\mathbb{R}\times S}|f(t+\tau,x)-f(t,x)|<\epsilon\}
\]
is relatively dense on $\R$, i.e. there is a number $l=l(\epsilon,S)>0$ such that $(a, a+l)\cap T(\epsilon,f,S)\neq \emptyset$ for any $a\in\R$.
\end{de}


Similar to almost periodic functions, we have the following result.

\begin{prop}[Yoshizawa \cite{Y75}] \label{uap-p}
Let $D\subset\R^d$ be an open set. For a given continuous function $f:\R\times D\to \R^d$, the following statements are equivalent.
\begin{itemize}
\item[{\rm (i)}] The function $f$ is almost periodic in $t$ uniformly for $x\in D$.
\item[{\rm (ii)}] For any sequence $\alpha'$, there exists a subsequence $\alpha\subset \alpha'$ such that
$T_\alpha f:=\lim_{n\to\infty} f(t+\alpha_n,x)$ exists uniformly on $\R\times S$ for any compact $S\subset D$.
\item[{\rm (iii)}] For every pair of sequences $\alpha'$ and $\beta'$, there are common subsequences $\alpha\subset\alpha'$
   and $\beta\subset\beta'$ such that for any compact $S\subset D$
\[
T_{\alpha+\beta} f = T_\alpha T_\beta f \quad \hbox{uniformly on }\R\times S.
\]
\end{itemize}
\end{prop}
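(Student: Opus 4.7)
The plan is to mirror the scalar Bochner--Bohr equivalence (Proposition \ref{ap-p}) with $\R$ replaced by $\R\times S$ for each compact $S\subset D$, using a diagonal extraction over an exhaustion $S_1\subset S_2\subset\cdots$ of $D$ by compact sets to handle uniformity in $x$. I would prove the cycle (i)$\Rightarrow$(ii)$\Rightarrow$(iii)$\Rightarrow$(ii)$\Rightarrow$(i), where (iii)$\Rightarrow$(ii) is trivial (take $\beta'$ constant) and closes the equivalence.

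For (i)$\Rightarrow$(ii), fix a compact $S\subset D$. I would first adapt Remark \ref{apbounded} to show $f$ is uniformly continuous and bounded on $\R\times S$: cover $\R$ by translates of $[0,l(\epsilon,S)]$ using $\epsilon$-almost periods, combined with uniform continuity of $f$ on the compact slab $[0,l(\epsilon,S)]\times S$. Given a sequence $\alpha'$, the shifts $\{f(\cdot+\alpha_n',\cdot)\}$ are then uniformly bounded and equicontinuous on $\R\times S$. To obtain \emph{global} uniform convergence (not merely local, as Arzel\`a--Ascoli provides), fix $\epsilon_k\downarrow 0$: each translate is determined up to error $\epsilon_k$ by its restriction to any single window of length $l(\epsilon_k,S)$, so Arzel\`a--Ascoli on that window combined with a Cantor diagonalization over $k$ yields $\alpha\subset\alpha'$ with $T_\alpha f$ converging uniformly on $\R\times S$. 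A further diagonalization over a compact exhaustion of $D$ produces a single subsequence good for every compact $S\subset D$.

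For (ii)$\Rightarrow$(iii), given $\alpha',\beta'$ and a compact $S$, extract successively: $\beta^{(1)}\subset\beta'$ with $T_{\beta^{(1)}}f=:h$ uniform on $\R\times S$; $\alpha^{(1)}\subset\alpha'$ with $T_{\alpha^{(1)}}h$ uniform (using that uniform limits of shifts of $f$ satisfy (ii)); and finally a common-index refinement $n(k)$, derived by applying (ii) to the sequence $\alpha^{(1)}+\beta^{(1)}$, giving $\alpha\subset\alpha^{(1)}$ and $\beta\subset\beta^{(1)}$ with $T_{\alpha+\beta}f$ uniform. The identity $T_{\alpha+\beta}f=T_\alpha T_\beta f$ then follows from
\[
|f(t+\alpha_k+\beta_k,x)-(T_\alpha h)(t,x)|\le |f((t+\alpha_k)+\beta_k,x)-h(t+\alpha_k,x)| + |h(t+\alpha_k,x)-(T_\alpha h)(t,x)|,
\]
both terms being uniformly small in $(t,x)\in\R\times S$; a compact-exhaustion diagonalization completes the step. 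For (ii)$\Rightarrow$(i), define $\phi:\R\to C_b(\R\times S)$ by $\phi(t)=f(\cdot+t,\cdot)$; statement (ii) says exactly that every sequence of translates $\phi(\alpha_n')$ has a uniformly convergent subsequence, and a routine check using uniform continuity of $f$ on $\R\times S$ shows the scalar Bochner criterion (Proposition \ref{ap-p}(ii)) holds for $\phi$ as a $C_b(\R\times S)$-valued function. Then Proposition \ref{ap-p}(i) applied to $\phi$ gives relative density of $\{\tau:\|\phi(\cdot+\tau)-\phi(\cdot)\|_\infty<\epsilon\}=T(\epsilon,f,S)$, which is exactly (i).

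The main obstacle is (i)$\Rightarrow$(ii): upgrading the local uniform convergence furnished by Arzel\`a--Ascoli to \emph{global} uniform convergence on $\R\times S$. This is where almost periodicity is used substantively --- the relative density of $\epsilon$-almost periods is what propagates Cauchy information on a single finite window to all of $\R$. The remaining implications reduce cleanly: (ii)$\Rightarrow$(iii) is triangle-inequality bookkeeping, (iii)$\Rightarrow$(ii) is immediate, and (ii)$\Rightarrow$(i) is obtained by reducing to the already-established scalar case through the $C_b(\R\times S)$-valued auxiliary function $\phi$.
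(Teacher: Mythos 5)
This proposition is quoted in the paper as a known result of Yoshizawa \cite{Y75} and is given no proof there, so there is nothing internal to compare against; your reconstruction follows the standard route of the classical references and is correct. The key steps are all in order: in (i)$\Rightarrow$(ii), the observation that agreement of two translates on a window $[0,l(\epsilon,S)]\times S$ propagates, via a relatively dense set of $\epsilon$-almost periods, to agreement within $2\epsilon$ on all of $\R\times S$ is exactly what upgrades Arzel\`a--Ascoli from local to global uniform convergence, and the double diagonalization (over $\epsilon_k\downarrow 0$ and over a compact exhaustion of $D$) is the right bookkeeping. Two points you gloss should be made explicit but are routine: in (ii)$\Rightarrow$(iii), the extraction of $\alpha^{(1)}$ with $T_{\alpha^{(1)}}h$ uniform requires transferring property (ii) from $f$ to the limit $h=T_{\beta^{(1)}}f$, which is done by a diagonal choice $k(n)$ approximating $h(\cdot+\alpha_n,\cdot)$ by $f(\cdot+\alpha_n+\beta_{k(n)},\cdot)$ and applying (ii) to $f$ along $\{\alpha_n+\beta_{k(n)}\}$; and in (ii)$\Rightarrow$(i), the uniform continuity of $f$ on $\R\times S$ needed for continuity of $\phi:\R\to C_b(\R\times S)$ cannot be borrowed from (i) (the cycle only provides (ii) at that stage) but follows from (ii) itself by a translate-compactness contradiction argument. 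With these noted, the argument is complete and is essentially the proof one finds in Yoshizawa or Fink.
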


For a given function $f: \mathbb{R}\times D \rightarrow\mathbb{R}^d$ almost periodic in $t$ uniformly for $x\in D$,
the {\em hull of $f$} is defined as follows:
\begin{align*}
H(f):= \{g : & \hbox{ there exists a sequence } \alpha \hbox{ such that } T_\alpha f = g \\
&\quad \hbox{ uniformly on } \mathbb{R}\times S \hbox{ for every compact set } S\subset D\}.
\end{align*}

We will need the following results in the sequel.

\begin{prop}\label{uap-p2}
Let $D\subset\R^d$ be an open set and $f:\R\times D\to \R^d$ be almost periodic in $t$ uniformly for $x\in D$.
\begin{itemize}
\item[{\rm (i)}] If a sequence $\alpha$ is such that $T_\alpha f$ exists uniformly on $\R\times S$ for any compact $S\subset D$, then
$T_\alpha f$ is almost periodic in $t$ uniformly for $x\in D$.

\item[{\rm (ii)}] Any $g\in H(f)$ is also almost periodic in $t$ uniformly for $x\in D$ and $H(g)=H(f)$.

\item[{\rm (iii)}] For any $g\in H(f)$, there exists a sequence $\alpha$ with $\alpha_n\to +\infty$ (or $\alpha_n\to -\infty$) such that
$T_{\alpha} f=g$ uniformly on $\R\times S$ for any compact $S\subset \R^d$.

\end{itemize}
\end{prop}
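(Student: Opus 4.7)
The plan is to handle the three parts in order. Throughout, fix a compact exhaustion $S_1 \subset S_2 \subset \cdots$ of the open set $D$; ``uniformly'' will mean uniformly on every $\R \times S_k$ (equivalently, on $\R \times S$ for every compact $S \subset D$), and I will reuse the same diagonal-extraction mechanism in parts (i) and (ii).

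For (i), I would verify Bochner's criterion, Proposition \ref{uap-p}(ii), for $g := T_\alpha f$. Given an arbitrary sequence $\gamma'$, the idea is to approximate the translates $g(\cdot + \gamma'_k, \cdot)$ by translates of $f$ at a rate fast enough to push through the extraction. Concretely, pick $n_k$ so that
\[
\sup_{(t,x) \in \R \times S_k}|f(t+\alpha_{n_k},x) - g(t,x)| < 1/k,
\]
which is possible since $T_\alpha f = g$ uniformly on $\R \times S_k$; translating $t \mapsto t + \gamma'_k$ preserves the supremum over $t \in \R$. Applying Proposition \ref{uap-p}(ii) to the sequence $(\gamma'_k + \alpha_{n_k})_k$ and passing to a diagonal subsequence $k_j$ over $\{S_k\}$ gives uniform convergence of $f(\cdot + \gamma'_{k_j} + \alpha_{n_{k_j}}, \cdot)$; the triangle inequality then forces $g(\cdot + \gamma'_{k_j}, \cdot)$ to converge uniformly, which proves (i).

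For (ii), the uniform almost periodicity of $g$ is just (i). The inclusion $H(g) \subset H(f)$ follows from the same approximation trick: given $h = T_\beta g$ uniformly, the sequence $\delta_j := \beta_{k_j} + \alpha_{n_{k_j}}$ satisfies $T_\delta f = h$ uniformly. For the opposite inclusion I first establish $f \in H(g)$ by applying Proposition \ref{uap-p}(iii) to $f$ with the pair $(-\alpha, \alpha)$: the common subsequences $\tilde\alpha_n = -\alpha_{m(n)}$, $\tilde\beta_n = \alpha_{m(n)}$ satisfy $\tilde\alpha_n + \tilde\beta_n \equiv 0$, so
\[
f = T_{\tilde\alpha + \tilde\beta} f = T_{\tilde\alpha} T_{\tilde\beta} f = T_{\tilde\alpha} g
\]
uniformly, using that $T_{\tilde\beta} f = g$ as a subsequence of the convergent $T_\alpha f$. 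With $\delta := \tilde\alpha$ witnessing $f = T_\delta g$, a second round of the approximation trick gives, for any $h = T_\gamma f$, a sequence $\eta_j := \gamma_j + \delta_{n_j}$ with $T_\eta g = h$ uniformly.

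For (iii), starting from any $\beta$ with $T_\beta f = g$ uniformly, I would manufacture one tending to $+\infty$ by shifting via almost periods (the $-\infty$ case is symmetric). For each $n$, pick $\tau_n \in T(\epsilon_n, f, S_n)$ with $\epsilon_n \to 0$ and $\tau_n \geq n - \beta_n$; such $\tau_n$ exist by relative density on $\R$. Setting $\alpha_n := \tau_n + \beta_n$ yields $\alpha_n \to +\infty$, and
\[
|f(t+\alpha_n, x) - g(t, x)| \le |f(t + \tau_n + \beta_n, x) - f(t + \beta_n, x)| + |f(t + \beta_n, x) - g(t, x)|
\]
bounds the first summand by $\epsilon_n$ on $\R \times S$ once $S \subset S_n$ (the uniform-in-$t$ almost period property) and drives the second to zero by hypothesis. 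The main obstacle throughout is keeping the diagonal extractions honest, so that a single subsequence delivers uniform convergence on every compact $S \subset D$ simultaneously; once this is set up in (i), parts (ii) and (iii) reduce essentially to triangle-inequality bookkeeping combined with the hull-composition law of Proposition \ref{uap-p}(iii).
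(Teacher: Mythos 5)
Your proof is correct; note that the paper itself states Proposition \ref{uap-p2} without proof (these are standard facts from the classical theory, cf.\ Fink and Yoshizawa), and your argument is essentially the textbook one: Bochner's criterion via the translate-approximation (triangle-inequality) trick for (i) and for $H(g)\subset H(f)$, the composition law of Proposition \ref{uap-p}(iii) applied to the pair $(-\alpha,\alpha)$ to get $f\in H(g)$ and hence the reverse inclusion in (ii), and a shift of the translation sequence by $\epsilon$-almost periods taken in far-out intervals for (iii). Two cosmetic remarks: Proposition \ref{uap-p}(ii) already yields one subsequence that works for every compact $S\subset D$ simultaneously, so the extra diagonal extraction in (i) is redundant (though harmless), and invoking Bochner's criterion for $g=T_\alpha f$ tacitly uses that $g$ is continuous, which follows from the uniform convergence on $\R\times S$ and deserves a word.
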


\begin{rem}
(i) Since we consider stochastic differential equations on $\R^d$ in this paper, i.e. $D=\R^d$ in our situation,
we will simply call a function, which is almost periodic in $t$ uniformly for $x\in \R^d$,
``uniformly almost periodic" in the sequel if there is no confusion.

(ii) Fink \cite{Fink74} and Seifert \cite{S82} introduced slightly different concepts of uniform almost periodicity;
see \cite{S82} for some discussions on their relations.
\end{rem}

\subsection{Asymptotically almost periodic functions}

\begin{de}\label{aap}
A continuous function $f:\mathbb{R_+}\to M$  is {\em asymptotically almost periodic} if there exists
an almost periodic function $p:\mathbb{R}\to M$ such that
\[
\lim_{t\rightarrow+\infty} d(f(t),p(t))=0.
\]
The function $p$ is called the {\em almost periodic part of $f$}. The asymptotically almost periodic function on $\R_-$ is defined similarly.
\end{de}

\begin{rem}
For a given asymptotically almost periodic function $f:\mathbb{R_+}\to M$, its almost periodic part is unique.
\end{rem}

\begin{prop}[Seifert \cite{S65}, Fink \cite{Fink72}]\label{aap-p}
For a given continuous function $f:\R_+ \to M$,  the following statements are equivalent.
\begin{itemize}
\item[{\rm (i)}] The function $f$ is asymptotically almost periodic.

\item[{\rm (ii)}] For any sequence $\alpha' >0$ with $\alpha'_n\rightarrow+\infty$, there exists a subsequence
$\alpha\subset\alpha'$ and a constant $d(\alpha)>0$ such that $T_{\alpha}f$ exists pointwise on $\mathbb{R}_+$ and if sequences
$\delta >0$, $\beta\subset\alpha,\gamma\subset\alpha$ are such that
\[
T_{\delta+\beta}f=h_1 \quad\hbox{and}\quad T_{\delta+\gamma}f=h_2
\]
exist pointwise on $\mathbb{R}_+$, then either $h_1\equiv h_2$ on $\R_+$ or
$\inf_{t\in\mathbb{R}_+}d(h_1(t),h_2(t))\geq 2d(\alpha)$.
\end{itemize}
\end{prop}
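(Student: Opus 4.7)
My plan is as follows.

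The direction (i)$\Rightarrow$(ii) I would handle directly. Write $f = p + r$ with $p$ almost periodic and $d(f(t), p(t)) \to 0$ as $t \to +\infty$. For any sequence $\alpha' > 0$ with $\alpha'_n \to +\infty$, Proposition \ref{ap-p} yields a subsequence $\alpha \subset \alpha'$ with $T_\alpha p = q$ uniformly on $\R$. Since $t + \alpha_n \to +\infty$ uniformly in $t \in \R_+$, the remainder satisfies $d(f(t+\alpha_n), p(t+\alpha_n)) \to 0$ uniformly in $t \in \R_+$, so $T_\alpha f = q$ uniformly on $\R_+$. I then pick any positive $d(\alpha)$, say $d(\alpha) = 1$, and verify the dichotomy: for $\delta > 0$ and $\beta, \gamma \subset \alpha$ with $T_{\delta+\beta}f = h_1$ and $T_{\delta+\gamma}f = h_2$ existing pointwise on $\R_+$, I claim $h_1 \equiv h_2$. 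Indeed, $\delta_n + \beta_n \to +\infty$ (and similarly for $\gamma$) forces the $f$-limits to coincide with the $p$-limits; moreover $T_\beta p = T_\gamma p = q$ uniformly on $\R$ (as subsequences of a uniformly convergent one), so substituting $s = t + \delta_n$ gives
\[
d(p(t + \delta_n + \beta_n),\, q(t + \delta_n)) \to 0 \quad \text{uniformly in } t \in \R_+ \text{ and } \delta_n \geq 0,
\]
and likewise with $\gamma$. Hence $h_1(t) = \lim_n q(t+\delta_n) = h_2(t)$, so the first branch of the dichotomy always holds.

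For the converse (ii)$\Rightarrow$(i), the plan is to verify the precompactness characterization of asymptotic almost periodicity, namely that $\{f(\cdot + \tau) : \tau \geq 0\}$ is precompact in $C_b(\R_+, M)$ with the supremum metric. I would proceed in four steps. First, applying (ii) to any sequence tending to $+\infty$ yields a convergent subsequence of $\{f(\alpha_n)\}$ in $M$; combined with continuity of $f$ on compact intervals this establishes precompactness of $R(f)$. Second, I would prove uniform continuity of $f$ on $\R_+$ by contradiction, using the separation dichotomy to preclude pairs $s_n, u_n \to \infty$ with $|s_n - u_n| \to 0$ but $d(f(s_n), f(u_n))$ bounded below. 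Third, for any $\alpha$ supplied by (ii) with $T_\alpha f = h$ pointwise, I would upgrade the pointwise convergence to uniform convergence on $\R_+$. Finally, the uniform limit $h$ extends via Proposition \ref{ap-p} to a Bochner almost periodic function $p$ on $\R$, and $d(f(t), p(t)) \to 0$ by unwinding the construction, giving (i).

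The main obstacle will be the third step (and step two has a similar flavor). Assuming $f(\cdot+\alpha_n) \to h$ fails to be uniform, I would produce $\epsilon_0 > 0$, $t_k \geq 0$, and $n_k \uparrow \infty$ with $d(f(t_k + \alpha_{n_k}), h(t_k)) \geq \epsilon_0$, and set $\beta_k := \alpha_{n_k}$ (so $\beta \subset \alpha$ with $T_\beta f = h$) and $\delta_k := t_k$. A diagonal extraction based on repeated application of (ii) produces a pointwise limit $T_{\delta + \beta}f = h_1$ with $d(h_1(0), h(0)) \geq \epsilon_0$. I would then construct an auxiliary $\gamma \subset \alpha$ such that $T_{\delta + \gamma}f = h_2$ exists pointwise with $h_2$ closely tracking $h$ near $t=0$; applying the separation dichotomy with common $\delta$ to $(h_1, h_2)$, and after replacing $\epsilon_0$ by $\min(\epsilon_0, d(\alpha))$ if necessary, forces either $h_1 \equiv h_2$ (contradicting $d(h_1(0), h(0)) \geq \epsilon_0 > 0$) or $\inf_t d(h_1(t), h_2(t)) \geq 2d(\alpha)$ (contradicting $d(h_1(0), h_2(0)) < 2d(\alpha)$ by construction of $\gamma$). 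The delicate combinatorial point is to realize $\gamma$ as a genuine subsequence of $\alpha$ while sharing the same $\delta$ as $\beta$ and maintaining common indices; this is precisely where the positivity $d(\alpha) > 0$ is used essentially, and a similar scheme covers step two.
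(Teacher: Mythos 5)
The paper itself offers no proof of this proposition --- it is quoted from Seifert and Fink --- so your argument has to stand on its own. Your direction (i)$\Rightarrow$(ii) does: taking $\alpha$ so that $T_\alpha p=q$ uniformly on $\R$, the uniformity in $s$ of $d(p(s+\beta_k),q(s))\to 0$ for any $\beta\subset\alpha$ lets you absorb the varying shifts $\delta_k$, so every limit $T_{\delta+\beta}f$ equals $\lim_k q(\cdot+\delta_k)$ and the first branch of the dichotomy always holds; hence any $d(\alpha)>0$ works. That half is complete.

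The converse, which is the whole substance of the proposition, is only a plan, and the one concrete scheme you give for its key step would not close. From non-uniformity you take $d(f(t_k+\alpha_{n_k}),h(t_k))\ge\epsilon_0$, set $\delta_k=t_k$, $\beta_k=\alpha_{n_k}$, extract $h_1=T_{\delta+\beta}f$, and build $\gamma\subset\alpha$ so that $h_2=T_{\delta+\gamma}f$ ``tracks $h$'', i.e. $h_2(0)=\lim_k h(t_k)$ (note the quantity bounded below by $\epsilon_0$ is $d(h_1(0),\lim_k h(t_k))$, not $d(h_1(0),h(0))$ as you wrote). This yields only a \emph{lower} bound $d(h_1(0),h_2(0))\ge\epsilon_0/2$; there is no upper bound at all, so the claim that $d(h_1(0),h_2(0))<2d(\alpha)$ ``by construction of $\gamma$'' is unjustified, and the second branch of the dichotomy --- $h_1,h_2$ being $2d(\alpha)$-separated on all of $\R_+$ --- is perfectly consistent with your configuration. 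No contradiction results. To make the step work you must pin the deviation strictly below $2d(\alpha)$: for instance, choose $j_k$ with $d(f(t_k+\alpha_{j_k}),h(t_k))$ small, observe that the \emph{continuous} real function $t\mapsto d(f(t+\alpha_{n_k}),f(t+\alpha_{j_k}))$ is small at $t=0$ (both translates tend to $h(0)$) and at least $\min(\epsilon_0,d(\alpha))/2$ near $t=t_k$, and take $\delta_k$ to be an intermediate-value crossing time where it equals, say, $\min(\epsilon_0,d(\alpha))/2$; then $d(h_1(0),h_2(0))$ is strictly between $0$ and $2d(\alpha)$ and both branches of the dichotomy fail. Nothing playing this role appears in your outline.

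Two further points. The combinatorial worry you flag is actually the harmless part: limits of the form $T_{\delta+\beta}f$ with $\beta\subset\alpha$, $\delta>0$ can be extracted by applying (ii) itself to the sequence $\{\delta_k+\beta_k\}$ (its subsequences are again of that form), so no equicontinuity is needed there. By contrast, your step two (uniform continuity of $f$) faces the same structural difficulty as above --- the dichotomy only compares limits along $\delta+\beta$ and $\delta+\gamma$ with a \emph{common} $\delta$ and $\beta,\gamma\subset\alpha$, and you do not say how a pair $s_n,u_n$ with $u_n-s_n\to0$ is put into that form --- and your step four, that the uniform limit extends to an almost periodic $p$ on $\R$ with $d(f(t),p(t))\to0$, is asserted by appeal to Proposition \ref{ap-p} but needs an actual construction (e.g.\ a $T_{-\alpha}T_\alpha$ argument), as does the Fr\'echet-type precompactness characterization you invoke at the outset.
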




\subsection{Almost periodicity in distribution}

Through the paper, we assume for convenience that $(\Omega, \mathcal{F}, {P} )$ is a probability space which is rich enough
to support random variables for any given distribution on $\R^d$ or the path space $C(\R,\R^d)$, the space of $\R^d$-valued
continuous functions on $\R$. Let ${L}^{2}({P}, \mathbb R^d)$ stand for the space of
all $\mathbb R^d$-valued random variables $X$ such that
$E |X|^{2} = \int_{\Omega}|X|^{2}\rmd  P <\infty$.
For $X\in {L}^{2}({P},\mathbb R^d)$, let
$ \|X\|_{2}{:=}\left( \int_{\Omega}|X|^{2}\rmd {P} \right)^{
1/2 }$. Then ${L}^{2} ({P}, \mathbb R^d)$ is a Hilbert space
equipped with the norm $\|\cdot\|_2$. For an $\R^d$-valued stochastic process $X=\{X(t):t\in\R\}$,
if $\sup_{t\in\R}\|X(t)\|_2 <\infty$, we say $X$ is {\em $L^2$-bounded} and denote
$\|X\|_\infty := \sup_{t\in\R}\|X(t)\|_2$. Then the set of $L^2$-bounded stochastic processes is
a Banach space with the norm $\|\cdot\|_\infty$. In what follows, we also denote by $X(t)$ or $X(\cdot)$ an $\R^d$-valued stochastic process
for convenience. Let $\cal P(\R^d)$ be the space of all Borel probability measures on $\R^d$.
For a given $\R^d$-valued random variable $X$, we denote by $\cal L(X)$ the law or distribution of $X$ on
$\R^d$; for a given process $X$, by the {\em law of $X$ on $\R^d$}
we mean the $\cal P(\R^d)$-valued mapping $\mu: \R\to \cal P(\R^d), t\mapsto \cal L(X(t))$.

Next, let us introduce the concept of almost periodicity in distribution.
For the definiteness, we endow  $\cal P(\R^d)$ with the $\rho$ metric (actually other metrics are also available):
\[
\rho (\mu,\nu) :=\sup\left\{ \left| \int f \rmd \mu - \int f\rmd \nu\right|: \|f\|_{BL} \le 1
\right\}, \quad \hbox{for } \mu,\nu\in \cal P(\R^d),
\]
where $f$ are Lipschitz continuous real-valued functions on $\R^d$ with the norms
\[
\|f\|_{BL}= \|f\|_L + \|f\|_\infty, \|f\|_L=\sup_{x\neq y} \frac{|f(x)-f(y)|}{|x-y|}, \|f\|_{\infty}=\sup_{x\in\R^d}|f(x)|.
\]
A sequence $\{\mu_n\}\subset \cal P(\R^d)$ is said to {\em weakly converge to $\mu$} if $\int f \rmd\mu_n\to \int f\rmd \mu$ for
all $f\in C_b(\R^d)$, the space of all bounded continuous real-valued functions on $\R^d$.
It is well-known that  $(\cal P(\R^d),\rho)$ is a separable complete metric space
and that a sequence $\{\mu_n\}$ weakly converges to $\mu$ if and only if $\rho(\mu_n,\mu)\to 0$ as $n\to\infty$.
See \cite[Chapter 11]{Dud} for this metric $\rho$ (denoted by $\beta$ there) and its related properties.
A sequence $\{X_n\}$ of $\R^d$-valued stochastic processes is said to {\em converge in distribution} to $X$
if $\cal L(X_n(t))$ weakly converges to $\cal L (X(t))$; the mode of convergence in $t$ will be specified at each use.

\begin{de}\label{apid}
An $\mathbb R^d$-valued stochastic process $X$ is said to be {\em (asymptotically) almost periodic
in distribution} if its law on $\R^d$ is a $\cal P(\mathbb R^d)$-valued (asymptotically) almost periodic
mapping.
\end{de}

\begin{rem}
Since $(\cal P(\R^d),\rho)$ is a complete metric space, all the assertions on (asymptotic) almost periodicity
for the abstract space $(M,d)$ hold for the $\mathbb R^d$-valued stochastic processes which are (asymptotically) almost periodic
in distribution.
\end{rem}


\subsection{Stochastic differential equations and separation}

Assume that $W_1$ and $W_2$ are two independent Brownian motions on the probability space $(\Omega, \mathcal{F}, {P})$. Let
\[
W (t) =\left\{ \begin{array}{ll}
                 W_1(t), &  \hbox{ for } t\ge 0, \\
                 -W_2(-t), &  \hbox{ for } t\le 0.
               \end{array}
 \right.
\]
Then $W$ is a two-sided Brownian motion defined on the filtered probability space $(\Omega, \mathcal{F}, {P},\cal F_t )$
with $\mathcal F_t=\sigma\{W(u): u \le t\}, t\in\R$.

Consider the equation \eqref{SNE}. The triple $(X',W'), (\Omega',\cal F',P'), \{\cal F'_t:t\in\R\}$ is a {\em weak solution} of \eqref{SNE}
if $(\Omega',\cal F',P')$ is a probability space and $\{\cal F'_t:t\in\R\}$ is a filtration of sub-$\sigma$-algebras of $\cal F'$,
$W'=\{W'(t):t\in\R\}$ is an $\cal F'_t$-adapted $m$-dimensional Brownian motion and $X'=\{X'(t):t\in\R\}$ is an $\cal F'_t$-adapted $d$-dimensional process
such that
\begin{equation*}\label{defsolu}
X'(t)= X'(s)+ \int_s^t f(r,X'(r))\rmd r
+\int_s^t g(r,X'(r))\rmd W'(r)
\end{equation*}
for all $t\ge s$ and each $s\in\mathbb{R}$ almost surely. The weak solution $(X',W'), (\Omega',\cal F',P'), \{\cal F'_t:t\in\R\}$ is a {\em strong solution}
if for given $t_0\in\R$, there exists a measurable function $h$ such that $X'(\cdot)=h(X'(t_0),W'(\cdot))$ on $\R$ almost surely.
For strong/weak solutions on the positive real line, see \cite{IW} or \cite{KS} for details; when the coefficients of \eqref{SNE}
are globally Lipschitz and of linear growth, see Remark \ref{comm-key} for some properties of strong/weak solutions of \eqref{SNE}
on $\R$.

For the Cauchy problem of \eqref{SNE} on the positive real line, it is well-known that the
pathwise uniqueness
implies uniqueness in the sense of probability law on the path space which we simply call ``weak uniqueness",
see e.g. \cite[\S IV.1]{IW}; in the meantime, we note that the weak uniqueness implies the uniqueness of law on $\R^d$.


Consider \eqref{SNE}. To emphasize explicitly the coefficients of \eqref{SNE}, we also call it {\em equation $(f,g)$}. For given $r>0$, we
introduce the following notations:
\begin{align*}
& \cal B_r:=\{X\in L^2(P,\R^d): \|X\|_2 \le r\}, \quad \cal D_r:=\{ \mu\in\cal P(\R^d): \int_{\R^d} |x|^2 \rmd \mu(x) \le r^2 \}, \\
&\cal B_r^{\eqref{SNE}}=\cal B_r^{(f,g)} :=\{X(\cdot): (X,W) \hbox{ weakly solves equation } (f,g)  \hbox{ on } \R \\
&\qquad\qquad\qquad\qquad \hbox{ on some filtered probability space for some } W \hbox{ and } \|X\|_\infty\le r \},\\
&\cal D_r^{\eqref{SNE}}=\cal D_r^{(f,g)} :=\{\mu: \mu(\cdot) =\cal L (X(\cdot)) \hbox{ for some } X \in \cal B_r^{(f,g)} \}.
\end{align*}

\begin{de}
If $\cal B_r^{\eqref{SNE}}$ is non-empty for some $r>0$, then $\lambda:=\inf_{X\in \cal B_r^{\eqref{SNE}}} \|X\|_{\infty}$ is called
the {\em minimal value of \eqref{SNE}}; if $X_0\in \cal B_r^{\eqref{SNE}}$ and $\|X_0\|_\infty =\lambda$, then $X_0$ is a
{\em minimal (weak) solution of \eqref{SNE}}.
\end{de}

\begin{de}
(i) Assume that the coefficients $A,f,B_i,g_i$ of \eqref{SLE} are almost periodic. If there is a sequence $\alpha$ such that $T_\alpha A=\tilde A$,
$T_\alpha f=\tilde f$, $T_\alpha B_i=\tilde B_i$ and $T_\alpha g_i=\tilde g_i$ uniformly on $\R$ for $i=1,\ldots,m$, then
the equations
\begin{align*}
\rmd X = (\tilde A(t)X+\tilde f(t)) \rmd t + \sum_{i=1}^m(\tilde B_i(t)X +\tilde g_i(t)) \rmd W_i,
\end{align*}
and
\begin{align*}
\rmd X = \tilde A(t)X \rmd t + \sum_{i=1}^m \tilde B_i(t)X  \rmd W_i
\end{align*}
are called  {\em hull equation of \eqref{SLE}} and {\em homogeneous hull equation of \eqref{SLE}}, respectively.
\\
(ii) Assume that $f,g$ in \eqref{SNE} are uniformly almost periodic.
The equation $(\tilde f,\tilde g)$ is called a {\em hull equation of \eqref{SNE}}, denoted by $(\tilde f,\tilde g)\in H(f,g)$,
if there exists a sequence $\alpha$ such that
$T_\alpha f=\tilde f$ and $T_\alpha g=\tilde g$, also denoted by $T_\alpha(f,g)=(\tilde f,\tilde g)$, uniformly on
$\R\times S$ for any compact subset $S\subset\R^d$.
\end{de}

\begin{de}\label{Fav-SLE}
We say that the {\em Favard (separation) condition  holds for \eqref{SLE}} if for any homogeneous hull equation corresponding to \eqref{SLE}
\begin{equation}\label{SLEh}
\rmd X = \tilde A(t)X \rmd t + \sum_{i=1}^m \tilde B_i(t)X\rmd W_i,
\end{equation}
every nontrivial $L^2$-bounded weak solution $X$ of \eqref{SLEh} on $\R$ satisfies $\inf_{t\in\R} \|X(t)\|_2 >0$.
\end{de}

\begin{rem}\label{Fav-u}
Note that if any nontrivial deterministic solution $x(t)$ of the equation $\dot x =A(t)x$ satisfies $\inf_{t\in\R} |x(t)|>0$, then
we have $\inf_{t\in\R}\|X(t)\|_2 >0$ for any nontrivial $L^2$-bounded stochastic process $X$ which satisfies the same equation.
The converse is obviously true. Therefore, the
Favard separation condition in Definition \ref{Fav-SLE} is a natural generalized version of the usual one mentioned in the
Introduction.
\end{rem}

\begin{de}
Assume that $f,g$ in \eqref{SNE} are uniformly almost periodic.
We say that the {\em Amerio positive (resp. negative) semi-separation condition holds for \eqref{SNE}} in $\cal D_r$
if any hull equation $(\tilde f, \tilde g)$ of \eqref{SNE}
only admits positive (resp. negative) semi-separated in distribution solutions in $\cal B_r$; that is, for any $\mu \in \cal D_r^{(\tilde f, \tilde g)}$,
there is a constant $d(\mu)>0$, called {\em separation constant}, such that $\inf_{t\ge0}\rho(\mu(t),\nu(t))\ge d(\mu)$ (resp. $\inf_{t\le 0}\rho(\mu(t),\nu(t))\ge d(\mu)$)
for any other $\nu\in \cal D_r^{(\tilde f, \tilde g)}$.
\end{de}

\begin{de}
A property $P$ is called {\em negative semi-separating in $\cal D_r^{\eqref{SNE}}$}
if for any distinct $\mu,\nu\in\cal D_r^{\eqref{SNE}}$ which satisfy $P$, there exists a constant $d^{\mu,\nu}>0$
such that $\inf_{t\in\R_-}\rho(\mu(t),\nu(t))\ge d^{\mu,\nu}$.
\end{de}

\begin{de}
Assume that $f,g$ in \eqref{SNE} are uniformly almost periodic.
A property $P$ is {\em inherited in distribution in $\cal D_r$} if $\mu \in\cal D_r^{(f,g)}$ has
property $P$ with respect to the elements of $\cal D_r^{(f,g)}$, $(\tilde f,\tilde g)\in H(f,g)$ with
$T_{\alpha}(f,g)=(\tilde f, \tilde g)$ and $T_{\alpha}\mu=\nu$ uniformly on compact intervals for some sequence $\alpha$,
then $\nu$ also has property $P$ with respect to the elements of $\cal D_r^{(\tilde f,\tilde g)}$.
\end{de}

\section{Favard separation for linear stochastic equations}
\setcounter{equation}{0}

The following result, which simply says that limits of solutions are solutions of the limit equation
in distribution sense, is a key ``lemma" for what follows and interesting on its own rights,
so we state it as a theorem.

\begin{thm}\label{key}
Consider the following family of It\^o stochastic equations on $\R^d$
\[
\rmd X = f_n(t,X)\rmd t + g_n(t,X)\rmd W,\quad n=1,2,\cdots, 
\]
where $f_n$ are $\R^d$-valued, $g_n$ are $(d\times m)$-matrix-valued, and
$W$ is a standard $m$-dimensional Brownian motion.
Assume that $f_n, g_n$ satisfy the conditions of global Lipschitz and linear growth with common Lipschitz and linear growth constants; that is,
there are constants $L$ and $K$, independent of $t\in\mathbb R$ and $n\in\N$, such that for all $x,y\in\mathbb R^d$
\begin{align*}
&|f_n(t,x)-f_n(t,y)|\vee |g_n(t,x)-g_n(t,y)|\le L|x-y|,\\
& |f_n(t,x)|\vee |g_n(t,x)|\le K (1+|x|),
\end{align*}
where $a\vee b=\max\{a,b\}$. Assume further that $f_n\to f$, $g_n\to g$ pointwise on $\mathbb R\times \mathbb R^d$ as $n\to\infty$ and that
$X_n\in \cal B_{r_0}^{(f_n,g_n)}$ for some constant $r_0$, independent of $n$. Then there is a subsequence of $\{X_n\}$ which
converges in distribution, uniformly on compact intervals, to some $X\in \cal B_{r_0}^{(f,g)}$.
\end{thm}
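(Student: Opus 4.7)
The plan is to establish tightness of the laws of the solution--driver pairs $(X_n,W_n)$ on every compact interval, extract a weakly convergent subsequence, realize the limit on a common probability space via Skorokhod's representation theorem, and pass to the limit in the integral form of each SDE. From the uniform bound $\|X_n\|_\infty\le r_0$, the linear-growth hypothesis $|f_n(t,x)|\vee|g_n(t,x)|\le K(1+|x|)$, and the Burkholder--Davis--Gundy inequality applied to the martingale part, a routine computation yields, for each $N\in\N$,
\[
E|X_n(t)-X_n(s)|^{4}\le C_N|t-s|^{2},\qquad s,t\in[-N,N],
\]
with $C_N$ independent of $n$. Kolmogorov's tightness criterion then gives tightness of $\{\cal L(X_n|_{[-N,N]})\}$ in $C([-N,N],\R^d)$; joint with the Wiener measures $\cal L(W_n|_{[-N,N]})$, a Cantor diagonal extraction produces a subsequence (not relabelled) along which $\cal L(X_n,W_n)$ converges weakly on $C([-N,N],\R^{d+m})$ for every $N$, hence on $C(\R,\R^{d+m})$ equipped with the topology of uniform convergence on compacta.

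By Skorokhod's representation theorem on this Polish space, there exist on a common probability space $(\tilde\Omega,\tilde{\cal F},\tilde P)$ copies $(\tilde X_n,\tilde W_n)$ and a limit $(\tilde X,\tilde W)$ with $\cal L(\tilde X_n,\tilde W_n)=\cal L(X_n,W_n)$ and $(\tilde X_n,\tilde W_n)\to(\tilde X,\tilde W)$ $\tilde P$-almost surely, uniformly on each compact interval. Standard arguments (cf.\ \cite{IW}) then ensure that $\tilde W$ is a Brownian motion for a suitable filtration and that $\tilde X_n$ still weakly solves equation $(f_n,g_n)$ driven by $\tilde W_n$ on $(\tilde\Omega,\tilde{\cal F},\tilde P)$.

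It remains to pass to the limit in
\[
\tilde X_n(t)=\tilde X_n(s)+\int_s^t f_n(r,\tilde X_n(r))\,\rmd r+\int_s^t g_n(r,\tilde X_n(r))\,\rmd\tilde W_n(r).
\]
For the drift, the decomposition $f_n(r,\tilde X_n)-f(r,\tilde X)=[f_n(r,\tilde X_n)-f_n(r,\tilde X)]+[f_n(r,\tilde X)-f(r,\tilde X)]$, together with the common Lipschitz constant $L$, the pointwise convergence $f_n\to f$, the linear-growth domination, and dominated convergence yields $L^1(\tilde P)$-convergence. The main obstacle is the stochastic integral, since the driving Brownian motions $\tilde W_n$ themselves vary with $n$. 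I would handle this term by the same splitting: the It\^o isometry reduces the first piece to showing $E\int_s^t|g_n(r,\tilde X_n)-g(r,\tilde X)|^{2}\,\rmd r\to 0$ (which follows exactly as for the drift), while the residual $\int_s^t g(r,\tilde X(r))\,\rmd \tilde W_n(r)\to \int_s^t g(r,\tilde X(r))\,\rmd \tilde W(r)$ is handled by a standard convergence-of-stochastic-integrals result (Kurtz--Protter type, or a direct argument approximating $g(r,\tilde X(r))$ by adapted step processes and exploiting the a.s.\ uniform convergence $\tilde W_n\to\tilde W$ on compact intervals). The resulting identity shows that $(\tilde X,\tilde W)$ weakly solves equation $(f,g)$ on $\R$; Fatou's lemma applied to $\tilde X_n(t)\to\tilde X(t)$ a.s.\ with $E|\tilde X_n(t)|^{2}\le r_0^{2}$ yields $\|\tilde X\|_\infty\le r_0$, so $\tilde X\in\cal B_{r_0}^{(f,g)}$. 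Finally, the equicontinuity of $t\mapsto\cal L(X_n(t))$ in the $\rho$-metric (a consequence of the moment bound above) upgrades the pointwise weak convergence $\cal L(X_n(t))\to\cal L(\tilde X(t))$ to uniform convergence on compact intervals, as asserted.
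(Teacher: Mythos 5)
Your overall strategy (path-space tightness of $(X_n,W_n)$, Skorokhod representation for the whole processes, then passage to the limit in the integral equation) is genuinely different from the paper's, and it could in principle be carried out, but as written it has two concrete gaps.

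First, the tightness step fails as stated. The hypothesis only gives $\sup_n\sup_t E|X_n(t)|^2\le r_0^2$, and the linear growth bound then yields only the second-moment increment estimate $E|X_n(t)-X_n(s)|^2\le C|t-s|$ (this is exactly the estimate the paper derives), which is not sufficient for Kolmogorov's criterion. Your claimed bound $E|X_n(t)-X_n(s)|^4\le C_N|t-s|^2$ is not a ``routine computation'': via BDG it requires $\sup_n\sup_{t\in[-N,N]}E|X_n(t)|^4<\infty$, since $|f_n|,|g_n|$ grow linearly, and finiteness of fourth moments of an $L^2$-bounded solution on $\R$ is neither assumed nor derivable from the hypotheses. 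The step can be repaired, e.g.\ by truncating the value at time $-N$ (replace $X_n(-N)$ by $X_n(-N)\mathbf 1_{\{|X_n(-N)|\le R\}}$, use the flow/pathwise-uniqueness property so that the truncated solution coincides with $X_n$ off an event of probability at most $r_0^2/R^2$, and note the truncated solutions do have uniform moments of all orders), but this localization idea is missing and is exactly where the work lies.

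Second, in the limit passage for the stochastic integral your splitting produces the term $\int_s^t g(r,\tilde X(r))\,\rmd\tilde W_n(r)$, which need not be a well-defined It\^o integral: the limit process $\tilde X$ is in general not adapted to any filtration with respect to which $\tilde W_n$ is a Brownian motion (it may anticipate $\tilde W_n$), so you cannot isolate this ``residual'' and then invoke convergence of stochastic integrals for it. The correct use of a Kurtz--Protter type theorem is to treat $\int_s^t g_n(r,\tilde X_n(r))\,\rmd\tilde W_n(r)$ directly, using the joint a.s.\ uniform convergence of the adapted pairs $(g_n(\cdot,\tilde X_n(\cdot)),\tilde W_n)$ and the uniform control of the drivers, or alternatively to pass through the martingale-problem formulation. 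For comparison, the paper avoids both difficulties by never working on the path space: it proves $\rho$-equicontinuity of the one-dimensional laws $t\mapsto\cal L(X_n(t))$ from the $L^2$ increment bound, applies Arzel\`a--Ascoli with values in the compact set $\cal D_{r_0}\subset\cal P(\R^d)$, and then identifies the limit curve as the law of a solution by Skorokhod representation at a single time, the Gihman--Skorohod approximation theorem forward in time, and Kunita's flow (homeomorphism) property backward in time; your concluding equicontinuity argument for uniform convergence on compact intervals coincides with this part of the paper.
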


\begin{proof}
Fix $n\in\N$. For given bounded interval $[a,b]\subset \R$ and $X_n\in \cal B_{r_0}^{(f_n,g_n)}$ with $(X_n,W_n)$ weakly
solving equation $(f_n, g_n)$ on some filtered probability space, by Cauchy-Schwarz inequality and It\^o's isometry we have
for any $a \le s \le t \le b$
\begin{align}\label{l2c}
E |X_n(t)-X_n(s)|^2  & = E \left| \int_s^t f_n(r,X_n(r))\rmd r + \int_s^t g_n(r,X_n(r)) \rmd W_n(r) \right|^2 \nonumber\\
& \le 2 (t-s) \int_s^t E |f_n(r,X_n(r))|^2 \rmd r + 2 \int_s^t E |g_n(r,X_n(r))|^2 \rmd r \nonumber\\
& \le 2 (t-s) K^2\int_s^t  E(1+|X_n(r)|)^2 \rmd r + 2 K^2\int_s^t  E(1+|X_n(r)|)^2 \rmd r \nonumber\\
& \le 4 (t-s) K^2 \int_s^t  E(1+|X_n(r)|^2) \rmd r + 4 K^2\int_s^t  E(1+|X_n(r)|^2) \rmd r \nonumber\\
& \le 4K^2(\|X_n\|_\infty^2 +1)(b-a+1)(t-s)\nonumber\\
& \le 4K^2 (r_0^2 +1)(b-a+1)(t-s).
\end{align}
Note that the estimate \eqref{l2c} is uniform in $X_n,n\in \mathbb N$.

It follows from Chebyshev's inequality that, for any $X\in \cal B_{r_0}$ and $C\in\mathbb{R}_+$, we have
\[
{P}\{|X|> C\}\le \frac{{E}|X|^2}{C^2}\leq\frac{r_0^2}{C^2}.
\]
So for given $\epsilon>0$, there is a compact set $K_{\epsilon}:=B_C$, a closed ball in $\R^d$, such that
$${P}\{X\in\mathbb{R}^d\backslash K_{\epsilon}\}\leq\epsilon$$
by choosing $C >0$ large enough.
By the Prohorov's theorem \cite{Pro}, $\cal D_{r_0}$ is contained in a compact set in $\mathcal{P}(\mathbb{R}^d)$; actually the Fatou's lemma
yields that $\cal D_{r_0}$ is closed and hence compact in $\mathcal{P}(\mathbb{R}^d)$.
It follows from \eqref{l2c} that the sequence $\{X_n\}$, regarded as continuous mappings from $[a,b]$ to $L^2(P,\R^d)$,
is equi-continuous. Denote $\mu_n(\cdot)=\cal L(X_n(\cdot)): [a,b]\to \cal P(\R^d)$, the law of $X_n(\cdot)$ on $\R^d$.
Then since $L^2$-continuity implies
continuity in distribution,  the sequence $\{\mu_n\}$ is equi-continuous.
Applying a general version of Arzela-Ascoli Theorem (see, e.g. \cite[Theorem 7.17]{Kel}), we obtain a subsequence of $\{\mu_n\}$,
still denote by $\{\mu_n\}$, which converges uniformly on $[a,b]$. Since the interval $[a,b]$ is arbitrary, by the diagonal
method there is a further subsequence, still denote by $\{\mu_n\}$, such that $\mu_n:\R\to\cal P(\R^d)$
converges to a function $\mu:\R\to\cal P(\R^d)$, uniformly on any compact interval.

In the remaining part of the proof, we prove that the limit $\mu$ is the law of some $L^2$-bounded solution $X$ of
the equation $(f,g)$ with $\|X\|_\infty\le r_0$,
so the theorem is proved.

For the given bounded interval $[a,b]$, since
$\mu_n(a)\to\mu(a)$ as $n\to\infty$, by the Skorohod representation theorem there is a probability space
$(\tilde\Omega,\tilde{\cal F},\tilde P)$ and random variables $\{\tilde X_n(a) \}_{n=1}^\infty$, $\tilde X(a)$ defined on it so that
$\cal L(\tilde X_n(a)) =\cal L(X_n(a))$, $\cal L(\tilde X(a)) = \mu(a)$ and $\tilde X_n(a)\to \tilde X(a)$ almost surely as $n\to\infty$.
We consider the equation $(f_n,g_n)$ with a
common Brownian motion $W$ on the probability space $(\tilde\Omega,\tilde{\cal F},\tilde P)$. Then since the coefficients $f_n, g_n$
satisfy the global Lipschitz and linear growth conditions, by the classical
approximation theorem (see, e.g. \cite[p54, Theorem 3]{GS}), we have
\begin{equation}\label{appr}
\sup_{t\in [a,b]} |\tilde X_n(t)-\tilde X(t)| \to 0 \quad \hbox{in probability as } n\to \infty,
\end{equation}
where $\tilde X_n(\cdot)$ and $\tilde X(\cdot)$ are strong solutions on $[a,b]$ of equations $(f_n,g_n)$ and $(f,g)$ with the common
Brownian motion $W$ and initial values $\tilde X_n(a)$ and $\tilde X(a)$, respectively. This implies that
\[
\mu_n(t)=\cal L(X_n(t)) = \cal L(\tilde X_n(t)) \to \cal L (\tilde X(t))
\]
uniformly on $[a,b]$, where $\cal L(X_n(t)) = \cal L(\tilde X_n(t))$ holds since the weak uniqueness for equation $(f_n,g_n)$ on $[a,b]$ holds and
weak uniqueness implies uniqueness of laws on $\R^d$.
On the other hand, $\mu_n(\cdot)\to \mu(\cdot)$ on $[a,b]$.
So this enforces that $\mu(t)=\cal L(\tilde X(t))$, $t\in [a,b]$.
We may restart from $b$ and repeat the above procedure. In this way, we have proved that $\mu$ is the law of some
solution of the equation $(f,g)$ on the half line $[a, \infty)$.

Next we will construct, by Kunita's stochastic flow method \cite{Kun}, a strong solution on $(-\infty,a]$ of the equation $(f,g)$ with the above common $W$
so that its law on $\R^d$ coincides with $\mu$ on $(-\infty,a]$.  Since $f$ and $g$ satisfy the global Lipschitz condition, by \cite[Theorem 2.4.3]{Kun}
we know that the solution mapping $\Phi_{s,t}(\cdot,\omega):\R^d\to\R^d$ of the equation $(f,g)$ is a homeomorphism of $\R^d$ for any $s<t$ and almost all $\omega$.
For given $c<a$, we take $\tilde X(c,\omega)= \Phi_{c,a}^{-1}(\tilde X(a,\omega),\omega)$ for each $\omega$, i.e. the inverse
image of $\tilde X(a)$ at ``time" $c$. So if we consider the equation $(f,g)$ on $[c,a]$ with initial value $\tilde X(c)$,
then the value of the solution at ``time" $a$ is exactly $\tilde X(a)$. In the same way, we take $\tilde X_n(c,\omega)= (\Phi_{c,a}^{n}(\tilde X_n(a,\omega),\omega))^{-1}$,
with $\Phi^{n}$ being the solution mapping of the equation $(f_n, g_n)$. Then the convergence of $\tilde X_n(a)$ to $\tilde X(a)$ implies that
of $\tilde X_n(c)$ to $\tilde X(c)$ since $\Phi^{n}$ is a homeomorphism and $\Phi^n_{c,a}\to\Phi_{c,a}$ as $n\to\infty$
by the above mentioned approximation theorem.
The same argument as that on $[a,b]$ shows that $\mu(\cdot)$ is the law of $\tilde X(\cdot)$ on the interval $[c,a]$. By repeating
the procedure, it follows that $\mu(\cdot)$ is the law of $\tilde X(\cdot)$ on $(-\infty,a]$ and hence on $\R$.

Since $\tilde X_n(t)$ converges in probability to $\tilde X(t)$ for each $t\in\R$, the Fatou's lemma and the fact $\|\tilde X_n\|_\infty \le r_0$ imply that
\[
E |\tilde X(t)|^2 \le \liminf_{n} E |\tilde X_n(t)|^2 \le r_0^2.
\]
That is, $\|\tilde X\|_\infty \le r_0$.

Finally, we replace $\tilde X(a)$ on $(\tilde\Omega,\tilde{\cal F},\tilde P)$ by a random variable
$X(a)$ on $(\Omega,{\cal F}, P)$ with the same law on $\R^d$, and denote the corresponding solution (the existence and
uniqueness is guaranteed by the Lipschitz and linear growth conditions) of equation $(f,g)$ by $X(t)$. Then this solution $X(t)$
admits the same distribution on $\R^d$ as that of $\tilde X(t)$ by weak uniqueness for the equation $(f,g)$,
and we also have $\|X\|_\infty \le r_0$. This $X(t)$ is what we look for. The proof is complete.
\end{proof}

\begin{rem}\label{comm-key}
(i) The above theorem is nontrivial since we consider solutions on the whole real line instead of the usual case where we
consider solutions of the Cauchy problem on a positive finite interval or the positive real line.

(ii) For given Brownian motion on some probability space, it follows from the proof of Theorem \ref{key} that
each solution of \eqref{SNE} on $\R$ is determined by the
``initial value" at time $0$ or at any given ``time" $a\in\R$, under the global Lipschitz and linear growth conditions.
That is, a weak solution on $\R$ is actually a strong solution on $\R$ in this case; this is similar to the usual case of solutions
on the positive real line. Therefore, on some occasions we will not distinguish weak or strong solutions (just call them solutions)
in what follows since the equations we consider in this paper satisfy the conditions of global Lipschitz and linear growth.
In this case, for convenience we assume that the probability space $(\Omega,\cal F,P)$, the Brownian motion $W$ and the filter $\cal F_t$
are fixed, as pointed out in the Introduction.


(iii) We can also observe from the proof of Theorem \ref{key} that if we only consider the law on $\R^d$ of solutions
of \eqref{SNE} on $\R$, then the law on $\R^d$ is determined
by the ``initial law" at time $0$ or at any given ``time" $a\in\R$ by the weak uniqueness on the positive real line and
the fact that the Kunita's stochastic flow theorem holds under the global Lipschitz and linear growth conditions.
That is, for any given two random variables at $0$ or any $a\in\R$
with the same law, the solutions on $\R$ they determine share the same law on $\R^d$. Actually the stronger result holds:
they share the same law on the path space.


\end{rem}

\begin{lem}\label{exis}
Assume that the coefficients $f,g$ of the equation \eqref{SNE} satisfy the conditions of global Lipschitz and linear growth,
and that there is an $L^2$-bounded solution for \eqref{SNE}. Then \eqref{SNE} admits a minimal
solution.
\end{lem}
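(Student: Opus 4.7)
The plan is to produce a minimal solution by selecting a minimizing sequence and passing to the limit via Theorem \ref{key}, with Fatou's lemma controlling the $L^2$-norm of the limit.

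First, fix any $L^2$-bounded solution $X^\ast$ of \eqref{SNE} and set $r^\ast := \|X^\ast\|_\infty$. Then $\cal B_{r^\ast}^{(f,g)} \neq \emptyset$, so the quantity
\[
\lambda := \inf_{X \in \cal B_{r^\ast}^{(f,g)}} \|X\|_\infty
\]
is well-defined and satisfies $0 \le \lambda \le r^\ast$. The goal is to exhibit some $X_0 \in \cal B_{r^\ast}^{(f,g)}$ with $\|X_0\|_\infty = \lambda$.

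Choose a minimizing sequence $\{X_n\} \subset \cal B_{r^\ast}^{(f,g)}$ with $\|X_n\|_\infty \to \lambda$ as $n\to\infty$. In particular $\|X_n\|_\infty \le r^\ast$ for every $n$, so the sequence is uniformly $L^2$-bounded by $r^\ast$. Apply Theorem \ref{key} with the constant choices $f_n \equiv f$, $g_n \equiv g$ (which trivially satisfy the common Lipschitz/linear-growth hypothesis and the convergence $f_n\to f$, $g_n\to g$) and with $r_0 = r^\ast$. This yields a subsequence, still denoted $\{X_n\}$, and a process $X_0 \in \cal B_{r^\ast}^{(f,g)}$ such that $\cal L(X_n(\cdot)) \to \cal L(X_0(\cdot))$ uniformly on every compact interval of $\R$. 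Since $X_0 \in \cal B_{r^\ast}^{(f,g)}$, we immediately get the inequality $\|X_0\|_\infty \ge \lambda$ by definition of the infimum.

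For the reverse inequality, I would invoke the Skorohod-representation / approximation step already carried out inside the proof of Theorem \ref{key}: on an auxiliary probability space one obtains processes $\tilde X_n, \tilde X_0$ with $\cal L(\tilde X_n(t)) = \cal L(X_n(t))$, $\cal L(\tilde X_0(t)) = \cal L(X_0(t))$ and $\tilde X_n(t) \to \tilde X_0(t)$ in probability for each $t\in\R$. Fatou's lemma then gives
\[
E|X_0(t)|^2 = E|\tilde X_0(t)|^2 \le \liminf_{n\to\infty} E|\tilde X_n(t)|^2 = \liminf_{n\to\infty} E|X_n(t)|^2 \le \liminf_{n\to\infty} \|X_n\|_\infty^2 = \lambda^2,
\]
uniformly in $t\in\R$; hence $\|X_0\|_\infty \le \lambda$. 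Combining the two inequalities yields $\|X_0\|_\infty = \lambda$, so $X_0$ is a minimal solution of \eqref{SNE}.

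The only real content of the argument is Theorem \ref{key}, which already guarantees that the limit of the minimizing sequence lies in $\cal B_{r^\ast}^{(f,g)}$; the main (minor) point to be careful about is that the limit process might a priori have larger $L^2$-norm than $\lambda$, but this is ruled out by Fatou applied on the Skorohod space, exactly as in the last paragraph of the proof of Theorem \ref{key}. No further difficulty is expected.
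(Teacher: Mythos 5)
Your argument is correct and is essentially the paper's own proof: the paper likewise takes a minimizing sequence (with $\|X_n\|_\infty\le\lambda+\tfrac1n$), applies Theorem \ref{key} to extract a limiting solution, and relies on the Fatou/Skorohod step inside the proof of Theorem \ref{key} to conclude that the limit satisfies $\|X_0\|_\infty\le\lambda$. Your version merely makes that last Fatou estimate explicit, which the paper leaves implicit.
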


\begin{proof}
Denote $\lambda$ as the minimal value of \eqref{SNE}.
Take a sequence $\{X_n\}$ of $L^2$-bounded solutions of \eqref{SNE} such that
\begin{equation*}
\|X_n\|_\infty \le \lambda + \frac1n.
\end{equation*}
Then it follows from Theorem \ref{key} that there is a subsequence of $\{X_n\}$ which converges in distribution to some solution $X$ of \eqref{SNE},
with $\|X\|_\infty \le \lambda$.
This limit solution is a minimal solution of \eqref{SNE}.
\end{proof}

\begin{lem}\label{homo}
Consider the homogeneous linear equation corresponding to \eqref{SLE} on $\R^d$
\begin{equation*}
\rmd X = A(t)X \rmd t + \sum_{i=1}^m B_i(t)X \rmd W_i.
\end{equation*}
Assume that $A,B_i$ are almost periodic and that $Y(t)$ is an $L^2$-bounded solution of the above equation on $\R$ which
is almost periodic in distribution. Then we have the following alternative:
\[
\inf_{t\in\R} \|Y(t)\|_2 >0 \quad \hbox{or} \quad Y(t)=0 \hbox{ for all } t\in\R \hbox{ a.s.}
\]
\end{lem}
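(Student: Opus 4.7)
The plan is to prove the dichotomy contrapositively: assuming $\inf_{t\in\R}\|Y(t)\|_2=0$, I will show that $Y(t)=0$ a.s.\ for every $t\in\R$.

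First I would pick a sequence $\{t_n\}\subset\R$ with $\|Y(t_n)\|_2\to 0$, so that the laws $\mu(t_n):=\cal L(Y(t_n))$ converge weakly to $\delta_0$ (since $L^2$-convergence implies convergence in probability, hence in distribution). Using Bochner's characterization together with the fact that $\mu:\R\to(\cal P(\R^d),\rho)$ is almost periodic, I extract a common subsequence (still denoted $\{t_n\}$) along which $T_{\{t_n\}}A=\tilde A$, $T_{\{t_n\}}B_i=\tilde B_i$ uniformly on $\R$, and $T_{\{t_n\}}\mu=\tilde\mu$ uniformly on $\R$ in the $\rho$-metric. I would then apply Theorem~\ref{key} to the shifted coefficients $f_n(t,x)=A(t+t_n)x$, $g_{n,i}(t,x)=B_i(t+t_n)x$ (which share uniform Lipschitz and linear-growth bounds, since almost periodic functions are bounded by Remark~\ref{apbounded}) and to the shifted solutions $Y_n(\cdot):=Y(\cdot+t_n)\in\cal B^{(f_n,g_n)}_{\|Y\|_\infty}$; a further subsequence then makes $Y_n$ converge in distribution, uniformly on compact intervals, to some $L^2$-bounded process $\tilde Y$ solving the homogeneous hull equation driven by $\tilde A,\tilde B_i$.

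From $\cal L(Y_n(0))=\mu(t_n)\to\delta_0$ on the one hand, and $\cal L(Y_n(0))\to\cal L(\tilde Y(0))$ (via the uniform-on-compacts convergence from Theorem~\ref{key}) on the other, I conclude $\tilde Y(0)=0$ a.s. Since the hull equation is linear with bounded coefficients, a standard Gronwall estimate applied to $E|\tilde Y(t)|^2$ gives $E|\tilde Y(t)|^2\le E|\tilde Y(0)|^2\,e^{C|t|}=0$ for every $t\in\R$, so $\tilde Y\equiv 0$ a.s. and hence $\tilde\mu(t)=\delta_0$ for all $t\in\R$.

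Finally I would transport this back to $\mu$ itself, using crucially that the convergence $T_{\{t_n\}}\mu=\tilde\mu$ holds \emph{uniformly on all of $\R$}: for any fixed $s\in\R$, choosing $t=s-t_n$ gives $\rho(\mu(s),\delta_0)\le\sup_{t\in\R}\rho(\mu(t+t_n),\delta_0)\to 0$, so $\mu(s)=\delta_0$, i.e., $Y(s)=0$ a.s. The step I expect to require the most care is the invocation of Theorem~\ref{key}, which must deliver a solution of the hull equation whose initial law is precisely the weak limit of $\mu(t_n)$; once that identification is secured, the rest reduces to Gronwall uniqueness for the linear hull equation combined with an elementary ``undoing" of the shift that is possible because the joint Bochner convergence of $(A,B_i,\mu)$ is uniform on $\R$.
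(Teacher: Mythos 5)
Your argument is correct, and its skeleton coincides with the paper's: assume $\inf_{t\in\R}\|Y(t)\|_2=0$, shift along the sequence where the norm vanishes, use Theorem~\ref{key} to pass to a hull equation whose limit solution $\tilde Y$ has $\cal L(\tilde Y(0))=\delta_0$, conclude $\tilde\mu\equiv\delta_0$, and transport back via almost periodicity of $\mu$. Where you genuinely depart from the paper is the backward-in-time propagation of $\tilde Y(0)=0$: the paper argues forward by pathwise uniqueness and backward by Kunita's stochastic flow (the solution maps $\Phi_{s,t}$ are a.s.\ homeomorphisms, as in the proof of Theorem~\ref{key}), whereas you use a backward Gronwall estimate. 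Your estimate is sound: for $t\le 0$ one writes $\tilde Y(t)=\tilde Y(0)-\int_t^0\tilde A\tilde Y\,\rmd r-\sum_i\int_t^0\tilde B_i\tilde Y\,\rmd W_i$, the stochastic integrals are forward It\^o integrals of adapted, $L^2$-bounded integrands (so the isometry applies, with $\tilde A,\tilde B_i$ bounded by Remark~\ref{apbounded}), giving $E|\tilde Y(t)|^2\le 3E|\tilde Y(0)|^2+C_T\int_t^0 E|\tilde Y(r)|^2\rmd r$ on $[-T,0]$; time reversal turns this into a standard Gronwall inequality and yields $\tilde Y\equiv 0$ on $[-T,0]$ (the exact constant in your stated bound is immaterial since the right side is zero). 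This buys a more elementary, self-contained treatment of the linear case, avoiding the flow/homeomorphism machinery; the paper's Kunita argument is the same device already deployed in Theorem~\ref{key} and is what one needs once linearity is unavailable. Your way of undoing the shift—evaluating the uniform-on-$\R$ convergence $T_{\{t_n\}}\mu=\tilde\mu$ at $t=s-t_n$—is a legitimate variant of the paper's choice $\beta'=-\alpha'$ in Proposition~\ref{ap-p}(iv); both rest on the assumed almost periodicity of $\mu$. Two small finishing touches: record that $\cal L(\tilde Y(\cdot))=\tilde\mu(\cdot)$ because both arise as limits of $\mu(\cdot+t_n)$ along the final subsequence, and upgrade ``$Y(s)=0$ a.s.\ for each $s$'' to ``$Y\equiv 0$ a.s.'' by continuity of sample paths, as the paper does.
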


\begin{proof}
We only need to show that $\inf_{t\in\R} \|Y(t)\|_2 =0$ implies $Y(t)= 0$ almost surely for all $t\in\R$, which implies $Y(t)= 0$
for all $t\in\R$ almost surely since $Y(t)$ is a continuous process. So let us assume
$\inf_{t\in\R} \|Y(t)\|_2 =0$, then there exists a sequence $\alpha'=\{\alpha'_n\}$ such that $Y(\alpha'_n)\to 0$ in
$L^2(P,\R^d)$. It follows from Proposition \ref{ap-p} (iv) (by choosing $\beta'=-\alpha'$ there)
that there exits a subsequence $\alpha\subset\alpha'$ so that
\begin{align*}
 T_\alpha A(t)= \tilde A(t), \quad T_\alpha B_i(t) =\tilde B_i(t),
 \quad T_{-\alpha}\tilde A(t)=  A(t), \quad T_{-\alpha}\tilde B_i(t) = B_i(t)
\end{align*}
and
\begin{align*}
 T_\alpha \mu(t) = \tilde \mu(t), \quad T_{-\alpha}\tilde \mu(t) = \mu(t)
\end{align*}
uniformly on $\R$, where $\mu(\cdot)$ is the law on $\R^d$ of the solution $Y(\cdot)$. By the proof of Theorem \ref{key},
the limit $\tilde \mu(\cdot)$ is the law of some solution $\tilde Y(\cdot)$
of the limit equation
\[
\rmd X = \tilde A(t)X \rmd t + \sum_{i=1}^m \tilde B_i(t)X \rmd W_i.
\]

Note that $\tilde\mu(0)=T_\alpha\mu(0)=\lim_{n\to\infty}\mu(\alpha_n)=\delta_0$ weakly, with $\delta_0$ being the Dirac measure at $0$.
So we have $\tilde Y(0) = 0$ almost surely, and hence $\tilde Y(t) =0 $ almost surely for $t\ge 0$
by the uniqueness of solutions; then by the
Kunita's stochastic flow method, as in the proof of Theorem \ref{key} again, we have $\tilde Y(t)= 0$ almost surely for $t\in\R_-$
and hence $\tilde \mu(t)\equiv \delta_0$ for $t\in\R$.
So we have $\mu(t)=T_{-\alpha}\tilde\mu(t)=\delta_0$ for each $t\in\R$. Therefore, $Y(t)= 0$ almost surely on $\R$. The proof is complete.
\end{proof}

\begin{rem}\label{refa}
Consider the linear equation of the form on $\R^d$
\begin{equation}\label{sim}
\rmd X = (AX +f(t) )\rmd t + g(t) \rmd W,
\end{equation}
where $A$ is a constant matrix, and $f,g$ are almost periodic.
Since any non-trivial deterministic bounded solution of $\dot x=Ax$ is almost periodic by \cite[Theorem 5.3]{Fink74},
it follows from Lemma \ref{homo} that these solutions are separated from $0$. So
the Favard condition holds for \eqref{sim} by Remark \ref{Fav-u}.
\end{rem}

\begin{lem}\label{Fav}
Assume that the Favard condition holds for the linear equation \eqref {SLE},
then, for given Brownian motion $W$ on the filtered probability space $(\Omega,\cal F,P,\cal F_t)$, there is
at most one strong minimal solution for any hull equation of \eqref {SLE}.
\end{lem}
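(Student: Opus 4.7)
The plan is to argue by contradiction. Suppose some hull equation of \eqref{SLE}, with coefficients $(\tilde A,\tilde f,\tilde B_i,\tilde g_i)$, admits two distinct strong minimal solutions $X_1$ and $X_2$ on the given filtered space, both with the same driving $W$. Let $\lambda^{*}:=\|X_1\|_\infty=\|X_2\|_\infty$ be the minimal value.

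First I would form $Y:=X_1-X_2$ and $Z:=\tfrac{1}{2}(X_1+X_2)$. By linearity of the hull equation and the fact that both $X_1,X_2$ are driven by the common $W$ on the common filtered probability space, $Y$ is an $L^{2}$-bounded strong solution on $\R$ of the homogeneous hull equation
\[
\rmd X=\tilde A(t)X\,\rmd t+\sum_{i=1}^{m}\tilde B_i(t)X\,\rmd W_i,
\]
while $Z$ is an $L^{2}$-bounded strong solution of the full hull equation $(\tilde A,\tilde f,\tilde B_i,\tilde g_i)$. By the triangle inequality $\|Z(t)\|_2\le\lambda^{*}$, so $\|Z\|_\infty\le\lambda^{*}$; combined with the definition of the minimal value, this forces $\|Z\|_\infty=\lambda^{*}$, and hence there is a sequence $\{t_n\}\subset\R$ with $\|Z(t_n)\|_2\to\lambda^{*}$.

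Next I would invoke the parallelogram identity in the Hilbert space $L^{2}(P,\R^{d})$ at each $t\in\R$:
\[
\|X_1(t)+X_2(t)\|_2^{2}+\|X_1(t)-X_2(t)\|_2^{2}=2\|X_1(t)\|_2^{2}+2\|X_2(t)\|_2^{2}\le 4(\lambda^{*})^{2},
\]
which gives $\|Y(t)\|_2^{2}\le 4\bigl((\lambda^{*})^{2}-\|Z(t)\|_2^{2}\bigr)$. Evaluating at $t=t_n$ and letting $n\to\infty$ yields $\|Y(t_n)\|_2\to 0$, so $\inf_{t\in\R}\|Y(t)\|_2=0$. By the Favard separation condition applied to the homogeneous hull equation above, this forces $Y\equiv 0$ on $\R$ almost surely, i.e. $X_1=X_2$, contradicting the assumption that they are distinct.

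The main obstacle, as I see it, is not any single verification but the conceptual step of coupling the homogeneous Favard condition (which controls only the \emph{difference} $Y$) with the minimality of the full-equation solutions (which controls only the \emph{average} $Z$); the parallelogram identity is the bridge. Everything else (linearity giving that $Y$ solves the homogeneous hull equation, that $Z$ achieves the minimal $L^{2}$-norm along some sequence, and that strong solutions on $\R$ here are well-defined by Remark \ref{comm-key}) is essentially bookkeeping under the ambient Lipschitz and linear-growth hypotheses.
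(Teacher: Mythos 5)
Your proof is correct and uses the same essential ingredients as the paper's: the difference $X_1-X_2$ solves the homogeneous hull equation, the parallelogram identity in $L^2(P,\R^d)$ links its norm to that of the midpoint $\tfrac12(X_1+X_2)$, and minimality plus the Favard condition yield the contradiction. The only difference is bookkeeping: the paper applies the Favard condition first to lower-bound $\|X_1(t)-X_2(t)\|_2$ and then contradicts minimality because the midpoint has $\|\cdot\|_\infty$ strictly below $\lambda$, whereas you use minimality of the midpoint to force $\inf_t\|X_1(t)-X_2(t)\|_2=0$ and then contradict the Favard condition — logically the same argument.
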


\begin{proof}
If the assertion is not true, then there exists a hull equation
\begin{equation}\label{h}
\rmd X= (\tilde A(t)X + \tilde f(t)) \rmd t + \sum_{i=1}^m(\tilde B_i(t)X + \tilde g_i(t))\rmd W_i
\end{equation}
so that $X_1$ and $X_2$ are both minimal solutions of the above equation with the common minimal value $\lambda$.
Then $(X_1-X_2)/2$ is a nontrivial $L^2$-bounded solution of the corresponding homogeneous hull equation
\[
\rmd X= \tilde A(t)X  \rmd t + \sum_{i=1}^m\tilde B_i(t)X \rmd W_i.
\]
But the Favard condition yields that there is a constant $\eta>0$ so that
\[
\inf_{t\in\R} \frac12 \|X_1(t)-X_2(t)\|_2 \ge \eta.
\]
It follows from the parallelogram formula that for any $t\in\R$
\[
\|\frac12 (X_1(t) + X_2(t))\|^2_2 + \|\frac12 (X_1(t) - X_2(t))\|^2_2 = \frac12 (\|X_1(t)\|^2_2 + \|X_2(t)\|^2_2)
\le \lambda^2.
\]
So $\|(X_1 + X_2)/2\|_\infty < \lambda$, the minimal value. This is a contraction since $(X_1 + X_2)/2$ is
an $L^2$-bounded solution of \eqref{h}.
\end{proof}

\begin{rem}\label{rem-Fav}
By Lemma \ref{Fav} and Remark \ref{comm-key} (ii)-(iii), it follows that, for any given hull equation of \eqref{SLE}, all the weak minimal
solutions (if they exist) of it share the same law on the path space and hence on $\R^d$ if the Favard condition holds for \eqref{SLE}.
\end{rem}

\begin{lem}\label{limit}
Assume that $f,g$ in \eqref{SNE} are uniformly almost periodic and satisfy global Lipschitz condition with Lipschitz constants
independent of $t$. Then any hull equation of \eqref{SNE}
admits the same minimal value as that of \eqref{SNE}.
\end{lem}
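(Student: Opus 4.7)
The plan is to establish both inequalities $\tilde\lambda \le \lambda$ and $\lambda \le \tilde\lambda$, where $\tilde\lambda$ denotes the minimal value of the hull equation $(\tilde f,\tilde g) = T_\alpha(f,g)$, by constructing $L^2$-bounded solutions of each equation as distributional limits of time-shifted solutions of the other, invoking Theorem \ref{key} as the main tool.

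For $\tilde\lambda \le \lambda$, by Lemma \ref{exis} there is a minimal solution $X$ of \eqref{SNE} attaining $\|X\|_\infty = \lambda$. Let $X_n(t) := X(t+\alpha_n)$; then each $X_n$ is an $L^2$-bounded weak solution of the shifted equation $(f_n,g_n) := (f(\cdot+\alpha_n,\cdot),\, g(\cdot+\alpha_n,\cdot))$, with $\|X_n\|_\infty = \lambda$ for every $n$. Uniform almost periodicity ensures $f_n \to \tilde f$ and $g_n \to \tilde g$ uniformly on $\R\times S$ for any compact $S\subset\R^d$, hence certainly pointwise on $\R\times\R^d$. Since $f_n,g_n$ inherit from $f,g$ the common Lipschitz and linear growth constants, Theorem \ref{key} (applied with $r_0 = \lambda$) yields a subsequence of $\{X_n\}$ that converges in distribution, uniformly on compact intervals, to some $\tilde X \in \cal B_\lambda^{(\tilde f,\tilde g)}$. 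Thus $\tilde\lambda \le \|\tilde X\|_\infty \le \lambda$. For the reverse direction, Proposition \ref{uap-p2}(ii) gives $H(\tilde f,\tilde g) = H(f,g)$, so $(f,g)\in H(\tilde f,\tilde g)$; applying the same argument with the roles reversed (starting from a minimal solution of the hull equation furnished by Lemma \ref{exis}) yields $\lambda \le \tilde\lambda$.

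The main delicate point is the interpretation of the shifted process $X_n$ as a genuine weak solution of $(f_n,g_n)$ in the sense of the definition of $\cal B_r^{(f_n,g_n)}$. If $(X,W)$ weakly solves $(f,g)$ on $(\Omega,\cal F,P,\cal F_t)$, one must check that $(X(\cdot+\alpha_n),\, W(\cdot+\alpha_n) - W(\alpha_n))$ weakly solves $(f_n,g_n)$ on the shifted filtered space $(\Omega,\cal F,P,\cal F_{t+\alpha_n})$; this is a routine change-of-variables in the stochastic integral using the strong Markov property of Brownian motion, but it is the only place outside of Theorem \ref{key} and Lemma \ref{exis} where genuine work is needed. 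The convergence $\tilde\lambda \le \|\tilde X\|_\infty$ itself follows trivially from the definition of $\tilde\lambda$, and the bound $\|\tilde X\|_\infty \le \lambda$ is already contained in the conclusion of Theorem \ref{key}.
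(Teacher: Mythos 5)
Your proposal is correct and takes essentially the same route as the paper's proof: shift a minimal solution along $\alpha$ together with $W_n(\cdot)=W(\cdot+\alpha_n)-W(\alpha_n)$, pass to a limit via Theorem \ref{key} to get $\tilde\lambda\le\lambda$, and then use $(f,g)\in H(\tilde f,\tilde g)$ to run the symmetric argument for $\lambda\le\tilde\lambda$. The only detail the paper makes explicit that you gloss over is that the linear-growth hypothesis of Theorem \ref{key} is available in the first place because $f(\cdot,0)$ and $g(\cdot,0)$ are bounded by almost periodicity (Remark \ref{apbounded}) combined with the global Lipschitz condition; with that noted, your argument matches the paper's.
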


\begin{proof}
Firstly note that the coefficients $f,g$ of \eqref{SNE} satisfy the linear growth condition since $f(\cdot,0),g(\cdot,0)$ are bounded
on $\R$ by Remark \ref{apbounded}. Assume that $\varphi$ is a minimal solution of \eqref{SNE},
i.e. $\|\varphi\|_\infty = \lambda$, the minimal value of \eqref{SNE}.
Then we have for any $s<t$
\[
\varphi(t) = \varphi(s) + \int_s^t f(r,\varphi(r))\rmd r + \int_s^t g(r,\varphi(r)) \rmd W(r)
\]
for some Brownian motion $W$. Consider the hull equation $(\tilde f,\tilde g)$ with $T_\alpha(f,g)=(\tilde f,\tilde g)$.
Denote $\varphi_n(\cdot):= \varphi(\cdot+\alpha_n)$, $f_n(\cdot,\cdot):= f(\cdot+\alpha_n,\cdot)$, $g_n(\cdot,\cdot):= g(\cdot+\alpha_n,\cdot)$,
and $W_n(\cdot):= W(\cdot+\alpha_n)-W(\alpha_n)$. Note that $f_n,g_n$ are uniformly almost periodic and globally Lipschitz
with the same Lipschitz constants as that of $f,g$, $W_n$ are standard
Brownian motions, and that $f_n\to \tilde f$, $g_n\to \tilde g$ uniformly on $\R\times S$ as $n\to\infty$ for any compact subset $S\subset\R^d$.
It is clear that $\varphi_n$ satisfies the following equation for any $s<t$
\[
\varphi_n(t) = \varphi_n(s) + \int_s^t f_n(r,\varphi_n(r))\rmd r + \int_s^t g_n(r,\varphi_n(r)) \rmd W_n(r).
\]

By Theorem \ref{key},
there is a subsequence of $\{\varphi_n\}$ which we still denote by the
sequence itself so that $\varphi_n$ converges in distribution, uniformly on compact intervals,
to some $\tilde\varphi$ as $n\to\infty$ which satisfies
the hull equation on $\R$, i.e. for any $s<t$
\[
\tilde\varphi(t) = \tilde\varphi(s) + \int_s^t \tilde f(r,\tilde\varphi(r))\rmd r + \int_s^t \tilde g(r,\tilde\varphi(r)) \rmd \tilde W(r)
\]
for some Brownian motion $\tilde W$.
The Fatou's lemma implies that $\|\tilde \varphi\|_\infty \le \|\varphi\|_\infty = \lambda$. Hence $\tilde\lambda \le \lambda$, where $\tilde\lambda$
denotes the minimal value of the hull equation $(\tilde f,\tilde g)$.

Conversely, by the property of uniform almost periodic functions, we have $T_{-\alpha} \tilde f = f$ and $T_{-\alpha} \tilde g = g$.
By the symmetry, we have $\lambda\le \tilde\lambda$. Therefore, $\lambda=\tilde\lambda$. The proof is complete.
\end{proof}

\begin{cor}\label{mini}
Consider \eqref{SNE} and assume that the assumptions of Lemma \ref{limit} hold. Assume further that $\varphi$ is a
minimal solution of \eqref{SNE}, and that the sequence $\alpha$
satisfies $T_\alpha(f,g)=(\tilde f,\tilde g)$ and $T_\alpha \varphi$ converges in distribution, uniformly on compact intervals,
to some solution $\tilde \varphi$ of
equation $(\tilde f,\tilde g)$. Then $\tilde\varphi$ is a minimal solution of equation $(\tilde f,\tilde g)$.
\end{cor}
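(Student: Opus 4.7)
The corollary is essentially a bookkeeping consequence of Lemma \ref{limit} together with the lower-semicontinuity of $\|\cdot\|_\infty$ under convergence in distribution. My plan is to show the two inequalities $\|\tilde\varphi\|_\infty \le \tilde\lambda$ and $\|\tilde\varphi\|_\infty \ge \tilde\lambda$, where $\tilde\lambda$ denotes the minimal value of the hull equation $(\tilde f,\tilde g)$.

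First I would invoke Lemma \ref{limit} to record the equality $\tilde\lambda = \lambda$, where $\lambda = \|\varphi\|_\infty$ is the minimal value of \eqref{SNE}. Since by hypothesis $\tilde\varphi$ is a solution of the hull equation $(\tilde f,\tilde g)$, the defining property of $\tilde\lambda$ as an infimum gives $\|\tilde\varphi\|_\infty \ge \tilde\lambda = \lambda$ (provided we verify $\tilde\varphi$ is $L^2$-bounded, which is part of the next step).

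For the reverse inequality I would use the convergence hypothesis. Since $T_\alpha\varphi$ converges in distribution to $\tilde\varphi$ uniformly on compact intervals, for each fixed $t\in\R$ we have $\mathcal L(\varphi(t+\alpha_n)) \to \mathcal L(\tilde\varphi(t))$ weakly in $\mathcal P(\R^d)$. Applying Fatou's lemma for weak convergence to the nonnegative continuous function $x\mapsto |x|^2$ (truncating by $|x|^2\wedge N$, using that truncations are bounded continuous, and then letting $N\to\infty$ via monotone convergence, exactly as in the proof of Lemma \ref{limit}), we obtain
\[
E|\tilde\varphi(t)|^2 \le \liminf_{n\to\infty} E|\varphi(t+\alpha_n)|^2 \le \|\varphi\|_\infty^2 = \lambda^2.
\]
Taking the supremum over $t$ yields $\|\tilde\varphi\|_\infty \le \lambda = \tilde\lambda$.

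Combining the two estimates gives $\|\tilde\varphi\|_\infty = \tilde\lambda$, which is precisely the statement that $\tilde\varphi$ is a minimal solution of equation $(\tilde f,\tilde g)$. There is no real obstacle here; the only subtle point is justifying the Fatou step for weak convergence with the unbounded weight $|x|^2$, but this is the same device already employed in the proof of Lemma \ref{limit}, so it can simply be cited.
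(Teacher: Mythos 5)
Your proof is correct and follows essentially the same route as the paper: the author's proof is literally "immediate from the proof of Lemma \ref{limit}", i.e.\ combine $\tilde\lambda=\lambda$ with the Fatou-type lower semicontinuity of $t\mapsto E|\cdot|^2$ along the convergent shifts to get $\|\tilde\varphi\|_\infty\le\lambda=\tilde\lambda$, while $\|\tilde\varphi\|_\infty\ge\tilde\lambda$ holds by definition. Your only (harmless) deviation is applying Fatou directly to the weak convergence via truncation, whereas the paper's underlying argument in Theorem \ref{key} routes this through the Skorohod representation.
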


\begin{proof}
It is immediate from the proof of Lemma \ref{limit}.
\end{proof}

\begin{lem}\label{uniap}
Assume that each hull equation of \eqref{SNE} admits a unique minimal solution in distribution sense, i.e. all the minimal
solutions of the given hull equation possess the same law on $\R^d$. Then these minimal solutions are almost periodic in distribution.
\end{lem}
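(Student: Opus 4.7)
The approach is to verify Bochner's criterion, Proposition \ref{ap-p} (ii), for the law $\mu(\cdot) = \cal L(\varphi(\cdot))$ of an arbitrary minimal solution $\varphi$ of \eqref{SNE}. By the uniqueness-in-distribution hypothesis all minimal solutions of a given hull equation share the same law, so it suffices to show that every sequence $\alpha'$ admits a subsequence $\alpha$ for which $T_\alpha \mu$ exists uniformly on $\R$.

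Given $\alpha'$, I would first extract $\alpha \subset \alpha'$ via Proposition \ref{uap-p} (ii) so that $T_\alpha(f,g) = (\tilde f, \tilde g)$ uniformly on $\R \times S$ for each compact $S \subset \R^d$. Applying Theorem \ref{key} to the translated processes $\varphi(\cdot + \alpha_n)$, which solve the shifted equations with shifted Brownian motions and remain in $\cal B_\lambda$ for $\lambda = \|\varphi\|_\infty$, I pass to a further subsequence, still called $\alpha$, so that $\mu(\cdot + \alpha_n) \to \tilde\mu(\cdot)$ uniformly on compact intervals, with $\tilde\mu$ the law of some solution $\tilde\varphi$ of $(\tilde f, \tilde g)$ satisfying $\|\tilde\varphi\|_\infty \le \lambda$. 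Corollary \ref{mini} then makes $\tilde\varphi$ minimal for $(\tilde f, \tilde g)$.

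The crucial step is upgrading compact-interval convergence to uniform convergence on $\R$. Suppose this fails: there exist $\epsilon_0 > 0$ and subsequences $n_k \to \infty$, $t_k \in \R$ with $\rho(\mu(t_k + \alpha_{n_k}), \tilde\mu(t_k)) \ge \epsilon_0$. If $\{t_k\}$ is bounded, uniform convergence on a compact interval containing all $t_k$ gives an immediate contradiction. Otherwise, after passing to a subsequence with $|t_k| \to \infty$, I invoke Proposition \ref{uap-p} (iii) on $\{t_k\}$ and $\{\alpha_{n_k}\}$ to obtain common subsequences $\beta_k = t_{i_k}$, $\gamma_k = \alpha_{n_{i_k}}$ with
$T_{\beta+\gamma}(f,g) = T_\beta T_\gamma(f,g) = T_\beta(\tilde f,\tilde g) =: (\hat f,\hat g)$
uniformly on $\R \times S$. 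Theorem \ref{key} together with Corollary \ref{mini}, applied once to $\tilde\varphi(\cdot + \beta_k)$ and once to $\varphi(\cdot + \beta_k + \gamma_k)$ with further subsequence extractions, produces laws $\hat\mu$ and $\bar\mu$ each of a minimal solution of the \emph{same} hull equation $(\hat f, \hat g)$.

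The uniqueness-in-distribution assumption now forces $\bar\mu = \hat\mu$, so in particular $\bar\mu(0) = \hat\mu(0)$. Combined with $\mu(\beta_k + \gamma_k) \to \bar\mu(0)$ and $\tilde\mu(\beta_k) \to \hat\mu(0)$ (evaluating the compact-interval limits at $t=0$), we conclude $\rho(\mu(\beta_k + \gamma_k), \tilde\mu(\beta_k)) \to 0$, contradicting $\rho(\mu(\beta_k + \gamma_k), \tilde\mu(\beta_k)) \ge \epsilon_0$. The main obstacle is the careful bookkeeping of nested subsequences so that the Bochner commutativity of Proposition \ref{uap-p} (iii) and the uniqueness-in-distribution hypothesis can be simultaneously applied to the iterated shifts $\tilde\varphi(\cdot + \beta_k)$ and $\varphi(\cdot + \beta_k + \gamma_k)$; once this is arranged, Theorem \ref{key} and Corollary \ref{mini} supply the required equality of limiting laws at $t = 0$.
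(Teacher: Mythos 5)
Your argument is correct, but it takes a genuinely different route from the paper's. You verify Bochner's criterion in the form of Proposition \ref{ap-p}~(ii): for every sequence there is a subsequence along which the translates of $\mu$ converge uniformly on all of $\R$. This forces you to upgrade the compact-interval convergence delivered by Theorem \ref{key} to global uniform convergence, which you do by a contradiction argument involving a second round of shifts $\beta_k=t_{i_k}$, a second application of Theorem \ref{key} and Corollary \ref{mini} to $\tilde\varphi(\cdot+\beta_k)$ and $\varphi(\cdot+\beta_k+\gamma_k)$, and the uniqueness hypothesis applied at the new hull equation $(\hat f,\hat g)=T_{\beta+\gamma}(f,g)$; this is sound (the Lipschitz/linear growth constants and uniform almost periodicity pass to hull limits, $\gamma\subset\alpha$ still gives $T_\gamma(f,g)=(\tilde f,\tilde g)$, and $(\hat f,\hat g)\in H(f,g)$, so the uniqueness hypothesis does apply there), but it costs the nested-subsequence bookkeeping you acknowledge. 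The paper instead verifies the commutativity criterion, Proposition \ref{ap-p}~(iii): for arbitrary $\alpha',\beta'$ it chooses common subsequences so that $T_{\alpha+\beta}(\tilde f,\tilde g)=T_\alpha T_\beta(\tilde f,\tilde g)$, uses Theorem \ref{key} and Corollary \ref{mini} once to realize $T_{\alpha+\beta}\tilde\mu$ and $T_\alpha T_\beta\tilde\mu$ as laws of minimal solutions of the same hull equation, and concludes equality from the uniqueness hypothesis; since criterion (iii) only needs pointwise limits, no upgrade from compact intervals to $\R$ is required, which makes the proof shorter. Your approach buys, as a by-product, the uniform-on-$\R$ convergence of the translated laws, a slightly stronger intermediate statement. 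One small presentational point: the lemma concerns the minimal solutions of every hull equation, while you argue for a minimal solution of \eqref{SNE} itself; this is harmless because $H(\tilde f,\tilde g)=H(f,g)$ by Proposition \ref{uap-p2}~(ii) and the standing Lipschitz bounds persist in the hull, so the same argument applies verbatim with $(f,g)$ replaced by any $(\tilde f,\tilde g)\in H(f,g)$, but it should be said.
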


\begin{proof}
For given $(\tilde f,\tilde g)\in H(f,g)$ and arbitrary sequences $\alpha'$ and $\beta'$, by the property of uniform almost periodic
functions there exist common subsequences $\alpha,\beta$ of $\alpha',\beta'$ so that
\[
T_{\alpha+\beta} \tilde f = T_\alpha T_\beta \tilde f,\quad  T_{\alpha+\beta} \tilde g = T_\alpha T_\beta \tilde g
\]
uniformly on $\R\times S$ for any compact subset $S$ of $\R^d$. Assume that $\tilde\varphi$ is a minimal solution of the equation
$(\tilde f,\tilde g)$ whose law on $\R^d$ is $\tilde\mu$.

By the proof of Theorem \ref{key}, there exist common subsequences of $\alpha$ and $\beta$, which we still denote by $\alpha$ and $\beta$,
so that $T_{\alpha+\beta}\tilde \mu$ and $T_\alpha T_\beta \tilde\mu$ exist, uniformly on compact intervals, and they are laws of
processes $\varphi_1$ and $\varphi_2$, which are solutions of the equations $(T_{\alpha+\beta} \tilde f, T_{\alpha+\beta} \tilde g)$ and
$(T_\alpha T_\beta \tilde f, T_\alpha T_\beta \tilde g)$, respectively.
That is, $\varphi_1$ and $\varphi_2$ satisfy the same equation. By Corollary \ref{mini},
both $\varphi_1$ and $\varphi_2$ are minimal solutions of the equation
$(T_{\alpha+\beta} \tilde f, T_{\alpha+\beta} \tilde g)$. But each hull equation admits a unique minimal solution
in distribution sense, which enforces
$\cal L(\varphi_1) = \cal L(\varphi_2)$ and hence $T_{\alpha+\beta}\tilde \mu= T_\alpha T_\beta \tilde\mu$ .
That is, $\tilde \varphi$ is almost periodic in distribution by Proposition \ref{ap-p}. The proof is complete.
\end{proof}


The following result is Theorem A in the Introduction.

\begin{thm}\label{Favard}
Consider \eqref{SLE} with the coefficients $A, B_1,\ldots, B_m$, $f,g_1,\ldots,g_m$ being almost periodic.
Assume further that \eqref{SLE} admits an $L^2$-bounded solution and that the Favard condition holds
for \eqref{SLE}. Then \eqref{SLE} admits an almost periodic in distribution solution.
\end{thm}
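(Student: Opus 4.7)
The plan is to apply Lemma \ref{uniap}, which reduces the theorem to showing that every hull equation of \eqref{SLE} admits a unique minimal solution in distribution sense. Once this is proved, Lemma \ref{uniap} immediately yields that the minimal solutions of \eqref{SLE} itself are almost periodic in distribution.

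First I would verify that \eqref{SLE} fits the hypotheses of Lemma \ref{exis}. Since $A, B_i, f, g_i$ are almost periodic, they are bounded on $\R$ by Remark \ref{apbounded}; hence the coefficients $x\mapsto A(t)x+f(t)$ and $x\mapsto B_i(t)x+g_i(t)$ are globally Lipschitz in $x$ with constants independent of $t$ and of linear growth. Together with the hypothesis that \eqref{SLE} possesses an $L^2$-bounded solution, Lemma \ref{exis} produces a minimal solution $\varphi$ of \eqref{SLE} with $\|\varphi\|_\infty = \lambda$, the minimal value. Note that the same reasoning applies verbatim to every hull equation, so by Lemma \ref{limit} each hull equation has the same minimal value $\lambda$, and a minimal solution of any given hull equation can be produced as a limit (in distribution, uniformly on compact intervals) of time-shifts of $\varphi$ via Theorem \ref{key} and Corollary \ref{mini}.

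The core step is uniqueness in distribution of the minimal solution of each hull equation. Given two weak minimal solutions $X_1, X_2$ of the same hull equation, possibly defined on different probability spaces with different driving Brownian motions, I would invoke Remark \ref{comm-key}(ii)--(iii): under the global Lipschitz and linear growth conditions, the law on $\R^d$ of a solution on the whole real line is determined by its ``initial law'' at time $0$. This allows me to realize both solutions on a single filtered probability space carrying one common Brownian motion $W$, with initial values at time $0$ chosen to have the same laws as $X_1(0)$ and $X_2(0)$. The resulting strong solutions retain the respective laws on $\R^d$ and remain minimal since $\|\cdot\|_\infty$ depends only on one-dimensional marginals. Lemma \ref{Fav} then forces these two strong minimal solutions (sharing $W$) to coincide, so $X_1$ and $X_2$ must have the same law on $\R^d$. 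Combining everything, every hull equation admits exactly one minimal solution in distribution, and Lemma \ref{uniap} finishes the proof.

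I expect the main obstacle to be precisely this uniqueness-in-distribution step. Lemma \ref{Fav} only gives a pathwise-style uniqueness of minimal solutions driven by a \emph{fixed} Brownian motion, while Lemma \ref{uniap} demands weak uniqueness across arbitrary probability spaces. The bridge between these is the Kunita stochastic flow observation in Remark \ref{comm-key}, which is what makes the transfer of weak solutions to a common probabilistic setup legitimate; without this flow machinery, the jump from single-probability-space uniqueness to distributional uniqueness on $\cal D_\lambda^{(\tilde A, \tilde f, \tilde B_i, \tilde g_i)}$ would be unavailable.
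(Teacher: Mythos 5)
Your proposal is correct and follows essentially the same route as the paper: Lemma \ref{exis} together with Corollary \ref{mini} gives minimal solutions for every hull equation, the combination of Lemma \ref{Fav} with the Kunita-flow observation of Remark \ref{comm-key} (which is exactly the content of Remark \ref{rem-Fav}) gives uniqueness of the minimal solution in distribution, and Lemma \ref{uniap} then yields almost periodicity in distribution. The uniqueness-in-distribution bridge you single out as the main obstacle is precisely how the paper handles it.
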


\begin{proof}
It follows from Lemma \ref{exis} and Corollary \ref{mini} that each hull equation of \eqref{SLE}
admits minimal solutions, and by Lemma \ref{Fav} and Remark \ref{rem-Fav} each hull equation admits a unique
minimal solution in distribution sense. So the theorem follows from Lemma \ref{uniap}.
\end{proof}

\begin{rem}\label{BN}
It follows from Remark \ref{refa} and Theorem \ref{Favard} that the existence of $L^2$-bounded solutions of
\eqref{sim} implies that it admits an almost periodic in distribution solution. This can be regarded as a stochastic version
of Bohr-Neugebauer type result, mentioned in the Introduction.
\end{rem}

\begin{cor}\label{thAco}
Consider the equation of the form on $\R^d$
\begin{equation}\label{exa}
\rmd X= [A(t)X + f(t)] \rmd t + g(t) \rmd W,
\end{equation}
where $A,f,g$ are almost periodic functions. If the corresponding deterministic equation
\begin{equation}\label{here0}
\dot x= A(t)x + f(t)
\end{equation}
satisfies the Favard condition in usual sense (mentioned in the Introduction) and \eqref{exa} admits an $L^2$-bounded solution, then
\eqref{exa} admits an almost periodic in distribution solution.
\end{cor}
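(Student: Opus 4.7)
The plan is to reduce the corollary to a direct application of Theorem \ref{Favard}, and for that I only need to verify that the stochastic Favard condition (Definition \ref{Fav-SLE}) holds for \eqref{exa}. Since an $L^2$-bounded solution of \eqref{exa} is assumed to exist, the only hypothesis of Theorem \ref{Favard} requiring work is this separation condition.

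First I would unravel what the stochastic Favard condition means in this setting. In \eqref{exa} the diffusion coefficient in front of $X$ is absent (all $B_i\equiv 0$), so the homogeneous hull equations reduce to purely deterministic linear equations
\[
\mathrm{d} X = \tilde A(t) X\,\mathrm{d} t,\qquad \tilde A\in H(A).
\]
A solution $X(t)$ of this equation on the probability space is just $X(t)=\Phi_{\tilde A}(t,0)X(0)$ a.s., where $\Phi_{\tilde A}$ is the deterministic fundamental matrix. Hence each pathwise realization $t\mapsto X(t,\omega)$ is an ordinary bounded solution of the deterministic hull equation $\dot x=\tilde A(t)x$.

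Next, I would invoke the usual (deterministic) Favard condition assumed for \eqref{here0}, which, by the definition recalled in the Introduction, says exactly that for every $\tilde A\in H(A)$, every nontrivial bounded solution $x$ of $\dot x=\tilde A(t)x$ satisfies $\inf_{t\in\R}|x(t)|>0$. Remark \ref{Fav-u} then directly upgrades this to its stochastic counterpart: any nontrivial $L^2$-bounded process $X$ satisfying $\mathrm{d}X=\tilde A(t)X\,\mathrm{d}t$ must obey $\inf_{t\in\R}\|X(t)\|_2>0$. Since $\tilde A\in H(A)$ was arbitrary, the stochastic Favard condition in the sense of Definition \ref{Fav-SLE} is verified for \eqref{exa}.

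Finally, with both hypotheses of Theorem \ref{Favard} in force ($L^2$-boundedness and Favard separation), Theorem \ref{Favard} yields an almost periodic in distribution solution of \eqref{exa}. There is no real obstacle here; the only point that could look subtle is the passage from pathwise separation of deterministic trajectories to the $L^2$-separation of stochastic processes, but this has already been handled in Remark \ref{Fav-u}, so the corollary falls out immediately.
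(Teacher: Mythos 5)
Your proposal is correct and follows essentially the same route as the paper: observe that the homogeneous hull equations of \eqref{exa} coincide with those of the deterministic equation \eqref{here0} (since the noise coefficients of $X$ vanish), upgrade the usual Favard condition to the stochastic one via Remark \ref{Fav-u}, and conclude with Theorem \ref{Favard}. (Your parenthetical claim that each pathwise realization is a \emph{bounded} deterministic solution is not quite justified -- $L^2$-boundedness does not give pathwise boundedness -- but it is not needed, since Remark \ref{Fav-u} already supplies the $L^2$-separation.)
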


\begin{proof}
Note that
the homogeneous equation corresponding to \eqref{exa} is the same
as that of the deterministic equation \eqref{here0},
so the Favard condition holds for \eqref{exa} by Remark \ref{Fav-u}.
The result now follows from Theorem \ref{Favard}.
\end{proof}

Finally we give a result, which confirms that there may be other almost periodic in distribution solutions
besides minimal ones.

\begin{thm}\label{non-sepa}
Consider the linear equation on $\R^d$
\begin{equation}\label{lincon}
\rmd X = (AX + f(t) )\rmd t + g(t) \rmd W,
\end{equation}
where $A$ is a constant matrix, $f,g$ are almost periodic, and $W$ is a given $m$-dimensional Brownian motion. If $X$ is a strong
$L^2$-bounded solution of \eqref{lincon} on $\R$
so that $X(\tau)-X_0(\tau)$ is independent of $X_0(\tau)$ and $W$
for some $\tau\in\R$, where $X_0$ is the strong minimal solution of \eqref{lincon}. Then $X$ is almost periodic in distribution.
\end{thm}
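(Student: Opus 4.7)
The plan is to decompose $X = X_0 + Y$ with $Y := X - X_0$ and analyze the two summands separately. Subtracting the two copies of \eqref{lincon} cancels the stochastic integrals and leaves the deterministic linear ODE $\dot Y(t)=AY(t)$, so that $Y(t)= e^{A(t-\tau)} Y(\tau)$ for every $t\in\R$. Both $X$ and $X_0$ are $L^2$-bounded, hence so is $Y$; writing $\Sigma:=E[Y(\tau)Y(\tau)^T]=\sum_i\lambda_iv_iv_i^T$ in spectral form, the identity $E|Y(t)|^2=\sum_i\lambda_i|e^{A(t-\tau)}v_i|^2$ forces each $v_i$ with $\lambda_i>0$ to lie in the subspace
\[
V:=\bigoplus_{\lambda\in i\R}\ker(A-\lambda I),
\]
i.e.\ the vectors with two-sided bounded $e^{At}$-orbit; hence $Y(\tau)\in V$ almost surely. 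On $V$ the restriction $e^{At}|_V$ is a direct sum of rotations, and therefore $t\mapsto e^{At}|_V$ is almost periodic as a map from $\R$ into the space of linear operators on $V$.

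I would then separately show that $\cal L(X_0(\cdot))$ and $\cal L(Y(\cdot))$ are almost periodic in $\cal P(\R^d)$. The former is Remark \ref{BN} (applicable since here the drift matrix is the constant $A$). For the latter, given any sequence $\alpha'$, almost periodicity of $e^{At}|_V$ yields $\alpha\subset\alpha'$ with $e^{A\alpha_n}|_V\to B$ in operator norm for some $B$; combined with $Y(t+\alpha_n)=e^{A\alpha_n}Y(t)$ and the uniform $L^2$-bound on $Y$, this gives $\sup_{t\in\R}\|Y(t+\alpha_n)-BY(t)\|_2\to 0$, and hence uniform convergence of the laws in the $\rho$-metric.

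Next I would upgrade the hypothesized independence at $t=\tau$ to independence of $Y(t)$ and $X_0(t)$ at every $t$. Under the global Lipschitz and linear growth assumptions, Remark \ref{comm-key}(ii) writes the strong solution $X_0(t)$ as a measurable function of $X_0(\tau)$ and the path of $W$, so $X_0(t)\in\sigma(X_0(\tau),W)$. The assumption that $Y(\tau)$ is independent of $\sigma(X_0(\tau),W)$ then yields $Y(\tau)\perp X_0(t)$, and since $Y(t)$ is a deterministic image of $Y(\tau)$, $Y(t)\perp X_0(t)$ for every $t\in\R$. Consequently
\[
\cal L(X(t))=\cal L(X_0(t))*\cal L(Y(t)) \qquad \text{for all } t\in\R.
\]

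Finally I would combine the two almost periodicities. Translation invariance of the $BL$-norm gives the joint $1$-Lipschitz estimate
\[
\rho(\mu_1*\nu_1,\mu_2*\nu_2)\le\rho(\mu_1,\mu_2)+\rho(\nu_1,\nu_2),
\]
so convolution is continuous in the $\rho$ metric. A diagonal extraction then produces a common subsequence $\alpha\subset\alpha'$ along which $\cal L(X_0(\cdot+\alpha_n))$ and $\cal L(Y(\cdot+\alpha_n))$ both converge uniformly on $\R$; the above estimate forces uniform convergence of $\cal L(X(\cdot+\alpha_n))$, and Bochner's characterization (Proposition \ref{ap-p}) delivers almost periodicity of $X$ in distribution. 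The point I expect to require the most care is the identification $Y(\tau)\in V$ almost surely, which must rule out any component of $Y(\tau)$ in stable, unstable, or nontrivial-Jordan directions; the spectral decomposition of $E[Y(\tau)Y(\tau)^T]$ sketched above appears to be the cleanest route, but one could alternatively argue by a direct Fatou / exponential dichotomy estimate.
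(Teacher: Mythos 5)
Your proposal is correct and follows the same overall strategy as the paper's proof: decompose $X=X_0+Y$ with $Y$ solving $\dot y=Ay$, use the independence hypothesis plus the measurable-function representation of the strong solutions (your appeal to Remark \ref{comm-key}(ii) plays the role of the paper's Yamada--Watanabe/Kunita argument) to get $\mathcal L(X(t))=\mathcal L(X_0(t))*\mathcal L(Y(t))$, and conclude from almost periodicity of the two factors via continuity of convolution. Where you genuinely diverge is in two sub-steps. First, for the almost periodicity of $Y$ the paper cites Fink's Theorem 5.3 together with a footnote asserting the argument survives $L^2$-random initial data; your spectral argument (support of $\mathcal L(Y(\tau))$ contained in the range of $E[Y(\tau)Y(\tau)^T]$, hence in the semisimple purely-imaginary eigenspace $V$, on which $e^{At}$ is a trigonometric-polynomial matrix) is a self-contained and more elementary replacement, and the identity $Y(t+\alpha_n)=e^{A\alpha_n}Y(t)$ on $V$ does give the uniform $L^2$ (hence $\rho$-uniform) convergence you claim. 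Second, for the final step the paper verifies the double-sequence criterion (iii) of Proposition \ref{ap-p} pointwise and cites Dudley for continuity of convolution, whereas you prove the quantitative bound $\rho(\mu_1*\nu_1,\mu_2*\nu_2)\le\rho(\mu_1,\mu_2)+\rho(\nu_1,\nu_2)$ (valid, by translation-invariance of the $BL$-norm) and use Bochner's single-sequence uniform-limit criterion; this is a clean, slightly stronger route. One citation should be tightened: Remark \ref{BN} only asserts that \eqref{lincon} admits \emph{some} almost periodic in distribution solution, while you need that the strong minimal solution $X_0$ itself is almost periodic in distribution; the correct chain is Remark \ref{refa} (Favard condition holds), Lemma \ref{Fav} and Remark \ref{rem-Fav} (uniqueness of the minimal solution), and Lemma \ref{uniap} (minimal solutions are almost periodic in distribution), exactly as in the paper. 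This is a matter of pointing at the right lemmas, not a gap in the argument.
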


\begin{proof}
By Remark \ref{refa} we know that the Favard condition holds for \eqref{lincon},
so it follows from Lemmas \ref{exis} and \ref{Fav} that there is a unique strong minimal solution $X_0$ for equation
\eqref{lincon}, which is almost periodic in distribution by Lemma \ref{uniap}.  Let $Y(\cdot)= X(\cdot)-X_0(\cdot)$. Then $Y$ is
an $L^2$-bounded solution of the equation $\dot x = Ax$ on $\R$, which is almost periodic in $L^2$-sense  by \cite[Theorem 5.3]{Fink74}\footnote{
Note that the argument there still applies when the initial value is replaced by an $L^2$-random variable.},
i.e. the mapping $t\mapsto Y(t)$ is an almost periodic $L^2(P,\R^d)$-valued mapping. By the Bochner's definition for almost periodicity, it follows that
$Y(\cdot)$ is almost periodic in distribution since $L^2$-convergence implies convergence in distribution.

Since $X_0$ is the solution of \eqref{lincon} with ``initial value" $X_0(\tau)$, by the strong solution theorem of Yamada-Watanabe
(see \cite{YW} or \cite[Theorem IV.1.1]{IW}),
\begin{equation}\label{strong}
X_0(\cdot)=F(X_0(\tau),W) \hbox{ for some measurable function } F
\end{equation}
and $t\ge \tau$. Since the coefficients of \eqref{lincon} are globally Lipschitz in $x$,
Kunita's stochastic flow theorem implies that
we may regard that \eqref{strong} holds
for all $t\in\R$. Similarly, for the equation $\dot x=Ax$, we have $Y(\cdot) = G(Y(\tau))$ for some measurable function $G$.
Since the random variable $Y(\tau)$ is independent of $X_0(\tau)$ and $W$, $Y(\cdot)$ is independent of $X_0(\cdot)$.
In particular, $X_0(t)$ is independent of $Y(t)$ for each $t\in\R$. So we have
\[
\cal L(X(t)) = \cal L(X_0(t)) * \cal L(Y(t)) \qquad \hbox{for each } t\in\R.
\]

Denote $\mu_1(t)= \cal L(X_0(t))$ and $\mu_2(t)=\cal L(Y(t))$ for each $t\in\R$. For arbitrary sequences $\alpha'$ and $\beta'$,
it follows from the almost periodicity of $\mu_1$ and $\mu_2$ that there are common subsequences $\alpha\subset\alpha'$
and $\beta\subset\beta'$ such that
\[
T_{\alpha+\beta}\mu_i(t) = T_\alpha T_\beta \mu_i(t)  \qquad \hbox{for each } t\in\R, i=1,2.
\]
Since the convolution of probability measures is continuous (see, e.g. \cite[Theorem 9.5.9]{Dud}), it follows that
\[
T_{\alpha+\beta} [\mu_1(t) * \mu_2(t)] = T_\alpha T_\beta [\mu_1(t) * \mu_2(t)]  \qquad \hbox{for each } t\in\R.
\]
That is, $\cal L(X(\cdot))$ is almost periodic by Proposition \ref{ap-p}. The proof is complete.
\end{proof}


\section{Amerio separation for nonlinear stochastic equations}

In this section, we consider the nonlinear equation \eqref{SNE}. Firstly, let us state the
following standing hypothesis which is used frequently in the sequel:

{\bf(H)} Assume that $f(t,x)$ is an $\R^d$-valued uniformly almost periodic function,
$g(t,x)$ is a $(d\times m)$-matrix-valued uniformly almost periodic function,
and $W$ is a standard $m$-dimensional Brownian motion. Assume further that
$f$ and $g$ are globally Lipschitz in $x$ with Lipschitz constants independent of $t$.

\begin{lem}\label{aaptoap}
Consider \eqref{SNE} and assume {\bf (H)}. If \eqref{SNE} admits an $L^2$-bounded solution $X$ on $\R$ which is
asymptotically almost periodic in distribution on $\R_+$, then \eqref{SNE} admits a solution $Y$ on $\R$ which is
almost periodic in distribution such that
\[
\lim_{t\to+\infty} \rho (\cal L(X(t)), \cal L(Y(t))) =0 \quad \hbox{and} \quad \|Y\|_\infty \le \|X\|_\infty.
\]
In particular, $\cal L(Y)$ is the almost periodic part of $\cal L(X)$.  The similar result holds when $X$ is asymptotically
almost periodic in distribution on $\R_-$.
\end{lem}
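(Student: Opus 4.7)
My plan is to build the almost-periodic-in-distribution solution $Y$ as a distributional translation-limit of $X$ itself, choosing shifts $t_n\to+\infty$ so as to simultaneously preserve the equation $(f,g)$ and pass to the almost-periodic part of $\cal L(X)$. Let $\mu(t):=\cal L(X(t))$ and let $\pi:\R\to\cal P(\R^d)$ denote the almost-periodic part of $\mu$ on $\R_+$ (well defined and unique by the remark after Definition \ref{aap}); so $\pi$ is almost periodic and $\rho(\mu(t),\pi(t))\to 0$ as $t\to+\infty$.

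The first step is the choice of shifts. Since $f,g$ are uniformly almost periodic and $\pi$ is almost periodic, for each $n\in\N$ the intersection
\[
T(1/n,f,B_n)\cap T(1/n,g,B_n)\cap T(1/n,\pi)
\]
of their $1/n$-almost-period sets is relatively dense in $\R$; I pick $t_n$ from it with $t_n\ge n$. Then $t_n\to+\infty$, $T_{t_n}(f,g)\to(f,g)$ uniformly on $\R\times S$ for every compact $S\subset\R^d$, and $T_{t_n}\pi\to\pi$ uniformly on $\R$.

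Next, set $X_n(\cdot):=X(\cdot+t_n)$ and $(f_n,g_n):=(f(\cdot+t_n,\cdot),g(\cdot+t_n,\cdot))$; then $X_n\in\cal B^{(f_n,g_n)}_{\|X\|_\infty}$, and $(f_n,g_n)$ inherit the global Lipschitz and linear-growth constants of $(f,g)$ (the latter from boundedness of $f(\cdot,0),g(\cdot,0)$, cf.\ Remark \ref{apbounded}). Since $f_n\to f$ and $g_n\to g$ pointwise on $\R\times\R^d$, Theorem \ref{key} yields a subsequence along which $X_n$ converges in distribution, uniformly on compact intervals, to some $Y\in\cal B^{(f,g)}_{\|X\|_\infty}$, so $\|Y\|_\infty\le\|X\|_\infty$. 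To identify $\cal L(Y)$ I will use, for each $t\in\R$,
\[
\rho(\mu(t+t_n),\pi(t))\le\rho(\mu(t+t_n),\pi(t+t_n))+\rho(\pi(t+t_n),\pi(t)),
\]
where the first term tends to $0$ because $t+t_n\to+\infty$ uniformly for $t$ in a compact set, and the second by the choice of $t_n$. Hence $\cal L(Y(t))=\pi(t)$ on $\R$, which makes $Y$ almost periodic in distribution, and $\rho(\cal L(X(t)),\cal L(Y(t)))=\rho(\mu(t),\pi(t))\to 0$ as $t\to+\infty$ identifies $\cal L(Y)$ as the almost-periodic part of $\cal L(X)$. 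The $\R_-$ case is handled identically with $s_n\to-\infty$ drawn from the symmetric common-almost-period sets.

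The main obstacle lies in the very first step: one must force the limit equation to remain the \emph{original} equation $(f,g)$ rather than some arbitrary hull equation, and at the same time force the shifted laws to converge to $\pi$ rather than to some other element of $H(\pi)$. The common-almost-period construction above, built by intersecting $\varepsilon$-almost-period sets of $f,g$ (relative to growing balls $B_n$) with those of $\pi$, is tailored to achieve both at once; once it is in place, Theorem \ref{key} supplies the rest.
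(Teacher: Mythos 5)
Your argument is correct, but it reaches the conclusion by a somewhat different mechanism than the paper. The paper fixes an arbitrary sequence $\alpha'$ with $\alpha'_n\to+\infty$ and makes a Bochner-type ``out-and-back'' passage: it first extracts $\alpha\subset\alpha'$ so that $T_\alpha(f,g)$ and $T_\alpha\mu=T_\alpha\eta$ exist (one application of Theorem \ref{key} shows $T_\alpha\eta$ is the law of an $L^2$-bounded solution of the hull equation), and then returns with $T_{-\alpha}$, using $T_{-\alpha}T_\alpha f=f$, $T_{-\alpha}T_\alpha g=g$, $T_{-\alpha}T_\alpha\eta=\eta$ and a second application of Theorem \ref{key} to realize the almost periodic part $\eta$ as the law of a solution of the original equation; everything stays inside the sequential framework of Propositions \ref{ap-p}--\ref{uap-p2}. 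You instead engineer the shift sequence in advance, taking $t_n\ge n$ from the relatively dense set of common $1/n$-almost periods of $f,g$ (on $B_n$) and of the almost periodic part $\pi$, so that a single limit passage (one application of Theorem \ref{key}) lands directly back on the equation $(f,g)$ with limit law $\pi$; the triangle-inequality identification $\rho(\mu(t+t_n),\pi(t))\le\rho(\mu(t+t_n),\pi(t+t_n))+\rho(\pi(t+t_n),\pi(t))\to0$ is exactly right, as are the uniform Lipschitz/linear-growth constants for the shifted coefficients and the Fatou-type bound $\|Y\|_\infty\le\|X\|_\infty$ delivered by Theorem \ref{key}. What your route buys is economy (one limit passage instead of two, no explicit use of hull equations); what it costs is the auxiliary fact, invoked but not proved, that finitely many almost periodic objects (here $f,g$ viewed as almost periodic maps into $C(B_n)$ together with the $\cal P(\R^d)$-valued $\pi$) have a relatively dense set of common $\epsilon$-almost periods. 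That fact is classical and follows from Bochner's criterion applied to the vector-valued map $t\mapsto(f(t+\cdot,\cdot)|_{B_n},g(t+\cdot,\cdot)|_{B_n},\pi(t))$, but since the paper never states it, you should either cite it (e.g.\ Fink's book) or include the one-line Bochner argument; with that reference supplied, your proof is complete, including the symmetric $\R_-$ case.
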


\begin{proof}
Denote $\mu(t):=\cal L(X(t))$ for each $t\in\R$ and fix a sequence $\alpha'\subset\mathbb{R}_+$ with $\alpha'_n\rightarrow\infty$. Since $f,g$ are uniformly almost periodic,
there is a subsequence $\alpha$ of $\alpha'$ such that $T_\alpha f$ and $T_\alpha g$ uniformly exist
on every $\mathbb R\times S$, with $S\subset \R^d$ being compact. By the proof of Theorem \ref{key}, the above subsequence $\alpha$ can be chosen
such that $T_\alpha\mu$ exists uniformly on any compact interval of $\R$.
On the other hand, since $\mu$ is asymptotically almost periodic on $\R_+$, there is a $\mathcal P(\mathbb R^d)$-valued almost periodic function $\eta$
such that $\lim_{t\to \infty} \rho(\mu(t),\eta(t))=0$. Note that the above subsequence $\alpha$
can be chosen such that $T_\alpha\eta$ uniformly exits on $\mathbb R$ by the almost periodicity of $\eta$
(so $T_\alpha\eta$ is almost periodic by Proposition \ref{uap-p2} (i)) and
\[
T_\alpha\mu(t)=T_\alpha \eta(t) \quad \hbox{for all } t\in\mathbb R.
\]
Also the proof of Theorem \ref{key} implies that $T_\alpha\mu$ (and hence $T_\alpha\eta$) is the law of some $L^2$-bounded solution $\tilde X(t)$,
with $\|\tilde X\|_\infty \le \|X\|_\infty$, of the limit equation
\[
\rmd X(t) = T_\alpha f(t,X(t)) \rmd t + T_\alpha g(t,X(t)) \rmd W(t).
\]


We take a subsequence of $\alpha$ if necessary (still denote it by $\alpha$) such that
\[
T_{-\alpha} T_\alpha f =f, ~T_{-\alpha} T_\alpha g =g
\]
uniformly on every $\mathbb R\times S$
and
\[
T_{-\alpha} T_\alpha \eta =\eta
\]
uniformly on $\mathbb R$. It follows from the proof of Theorem \ref{key} again that $T_{-\alpha} T_\alpha \eta$
is the law of some $L^2$-bounded solution $Y$ of the equation
\[
\rmd X(t) =T_{-\alpha} T_\alpha f(t,X(t)) \rmd t + T_{-\alpha} T_\alpha g(t,X(t)) \rmd W(t)
\]
with $\|Y\|_\infty \le \|\tilde X\|_\infty$.
That is, the almost periodic function $\eta$ is the law of the solution $Y$ of the equation \eqref{SNE}.

The proof in the case of $X$ being asymptotically almost periodic on $\R_-$ is similar.
\end{proof}


The following Amerio type result is Theorem B in the Introduction.

\begin{thm}\label{Amerio}
Consider \eqref{SNE}. Assume {\bf (H)} and that the Amerio positive (or negative)
semi-separation condition holds for \eqref{SNE} in $\cal D_r$. Then $|\cal D_r^{\eqref{SNE}}|$ is finite.
If $\cal B_r^{\eqref{SNE}}$ is non-empty, then it consists of almost periodic in distribution
solutions of \eqref{SNE}.
\end{thm}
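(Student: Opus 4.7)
The plan is to prove the two claims — finiteness of $\cal D_r^{\eqref{SNE}}$ and almost periodicity in distribution of each element of $\cal B_r^{\eqref{SNE}}$ — by compactness-plus-separation arguments anchored on Theorem \ref{key}.

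For finiteness, I argue by contradiction. Assume $\cal D_r^{\eqref{SNE}}$ contains infinitely many distinct $\mu_n = \cal L(X_n(\cdot))$ with $X_n \in \cal B_r^{\eqref{SNE}}$. Theorem \ref{key} applied with the constant sequence $(f_n, g_n) = (f, g)$ produces a subsequence converging uniformly on compact intervals to some $\mu^{\ast} \in \cal D_r^{\eqref{SNE}}$. At most one $\mu_n$ equals $\mu^{\ast}$, so by the positive semi-separation $\rho(\mu_n(0), \mu^{\ast}(0)) \ge d(\mu^{\ast}) > 0$ for the remaining indices, which contradicts pointwise convergence at $0$. The same reasoning, applied within any hull equation, yields the finiteness of $\cal D_r^{(\tilde f, \tilde g)}$ for every $(\tilde f, \tilde g) \in H(f, g)$.

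For almost periodicity of a fixed $\mu \in \cal D_r^{\eqref{SNE}}$, Proposition \ref{ap-p} reduces the task to showing that every sequence $\alpha'$ admits a subsequence $\alpha$ along which $T_\alpha \mu$ converges uniformly on $\R$. Proposition \ref{uap-p} (ii) and a diagonal application of Theorem \ref{key} first give a subsequence with $T_\alpha(f, g) = (\tilde f, \tilde g)$ uniformly on $\R \times S$ and $T_\alpha \mu = \tilde \mu$ uniformly on every compact interval, with $\tilde \mu \in \cal D_r^{(\tilde f, \tilde g)}$. To upgrade to uniform convergence on $\R$, suppose the opposite: extract $\epsilon_0 > 0$ and $t_n$ with $|t_n| \to \infty$ and $\rho(\mu(t_n + \alpha_n), \tilde \mu(t_n)) \ge \epsilon_0$; without loss of generality $t_n \to -\infty$ (the case $t_n \to +\infty$ being handled symmetrically under negative semi-separation). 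Applying Proposition \ref{uap-p} (iii) to $(\alpha, t')$ with $t' = \{t_n\}$ together with Theorem \ref{key} for the shifted laws, I extract common subsequences yielding a common hull equation $(\hat f, \hat g) := T_{t'}(\tilde f, \tilde g) = T_{\alpha + t'}(f, g)$ and two laws
\[
\hat \mu(\cdot) := \lim_n \tilde \mu(\cdot + t_n), \qquad \hat \mu'(\cdot) := \lim_n \mu(\cdot + t_n + \alpha_n),
\]
both in $\cal D_r^{(\hat f, \hat g)}$, with $\rho(\hat \mu(0), \hat \mu'(0)) \ge \epsilon_0$ so that $\hat \mu \neq \hat \mu'$. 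The positive semi-separation then forces $\inf_{t \ge 0} \rho(\hat \mu(t), \hat \mu'(t)) \ge d(\hat \mu) > 0$.

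To close the contradiction, I run the reverse shift $T_{-t'}$. Proposition \ref{uap-p} (iii) applied to $(t', -t')$ gives $T_{-t'}(\hat f, \hat g) = (\tilde f, \tilde g)$ uniformly on $\R \times S$, and a sufficiently sparse diagonal extraction, together with Theorem \ref{key}, arranges that $T_{-t'}\hat \mu = \tilde \mu$ and $T_{-t'}\hat \mu' = \tilde \mu$ uniformly on compact intervals. The triangle inequality then gives $\rho(\hat \mu(s - t_n), \hat \mu'(s - t_n)) \to 0$ for each fixed $s$. Since $t_n \to -\infty$, the sequence $s - t_n$ is eventually in $\R_+$, contradicting the lower bound $d(\hat \mu) > 0$ on $\R_+$. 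The main obstacle is precisely this reverse-shift identification: the Bochner commutation $T_{-t'} T_{t'}(f, g) = (f, g)$ is automatic from Proposition \ref{uap-p} (iii), but $\mu$ is not a priori almost periodic, so the analogous commutation for the law must be built by iteratively invoking Theorem \ref{key} and exploiting the finiteness of $\cal D_r^{(\cdot, \cdot)}$ at each stage to keep the tower of extractions terminating and the resulting limits confined to a finite candidate set.
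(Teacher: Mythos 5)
Your finiteness argument is sound and coincides with the paper's (Theorem \ref{key} gives a subsequential limit in $\cal D_r^{\eqref{SNE}}$ uniformly on compact intervals, and that limit could not be semi-separated). The gap is in the almost periodicity part, at exactly the step you flag as "the main obstacle," and it is not a technicality that finiteness repairs. Your contradiction needs $T_{-t'}\hat\mu=\tilde\mu$ and $T_{-t'}\hat\mu'=\tilde\mu$ (or at least $T_{-t'}\hat\mu=T_{-t'}\hat\mu'$) along common subsequences; this is precisely a Bochner-type commutation $T_{-t'}T_{t'}\mu=\mu$ for the law $\mu$, i.e.\ essentially the almost periodicity you are trying to prove. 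Theorem \ref{key} and the finiteness of $\cal D_r^{(\tilde f,\tilde g)}$ only tell you that the reverse-shifted limits exist (after extraction) and lie in a finite set; they do not force them to be $\tilde\mu$, nor even to coincide with each other. Indeed, nothing prevents the consistent alternative: since $\rho(\hat\mu(t),\hat\mu'(t))\ge d$ for $t\ge 0$ and $s-t_n\to+\infty$, any limits $\nu_1=T_{-t'}\hat\mu$, $\nu_2=T_{-t'}\hat\mu'$ satisfy $\rho(\nu_1(s),\nu_2(s))\ge d$ for every $s$, which is perfectly compatible with the separation hypothesis, so no contradiction is reached. A second, independent problem is the case $t_n\to+\infty$: the theorem assumes positive \emph{or} negative semi-separation, not both, so under the positive condition you cannot dispatch that case "symmetrically under negative semi-separation"; after the reverse shift the relevant points $s-t_n$ land in $\R_-$, where the assumed condition gives no lower bound.

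The paper avoids both difficulties by matching the one-sided hypothesis to a one-sided criterion. It first shows (using shifts $\alpha_n\to+\infty$, which preserve $\inf_{t\in\R_+}$-separation) that all hull equations have the same finite $|\cal D_r|$ and a common separation constant $d^{H(f,g)}$; then, for $\mu\in\cal D_r^{\eqref{SNE}}$, it verifies the Seifert--Fink criterion of Proposition \ref{aap-p}: any two limits $T_{\delta+\beta}\mu$, $T_{\delta+\gamma}\mu$ (with $\beta,\gamma\subset\eta$) are, by Theorem \ref{key}, laws of solutions of the \emph{same} hull equation $T_\delta T_\eta(f,g)$, hence either equal or separated by $d^{H(f,g)}$ on $\R_+$; this yields asymptotic almost periodicity in distribution on $\R_+$. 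Finally, Lemma \ref{aaptoap} shows the almost periodic part $p$ of $\mu$ is again the law of a solution in $\cal B_r^{\eqref{SNE}}$, and since $\rho(\mu(t),p(t))\to 0$ as $t\to+\infty$, the semi-separation forces $\mu\equiv p$. If you want to keep your direct Bochner-criterion strategy, you would have to first establish something like this asymptotic structure anyway; as written, the proposal does not close.
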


\begin{proof}
We only consider the case of positive semi-separation since the negative semi-separation case is similar.

Firstly, $\cal D_{r}^{\eqref{SNE}}$ consists of finite number of elements, so the separation constant for
\eqref{SNE} depends only on the equation itself, independent of elements of $\cal D_{r}^{\eqref{SNE}}$.
Indeed, by Theorem \ref{key}, 
if there are infinite elements in
$\cal D_{r}^{\eqref{SNE}}$, then there exists a subsequence which converges, uniformly on
compact intervals, to an element of $\cal D_{r}^{\eqref{SNE}}$. But this limit cannot be positively semi-separated,
a contradiction to the Amerio semi-separation condition.

Secondly, the separation constant can be taken the same for all the hull equations of \eqref{SNE}. To see this, for any $(\tilde f,\tilde g)
\in H(f,g)$, by Proposition \ref{uap-p2} (iii) there exists a sequence $\alpha'$ with $\alpha'_n\to +\infty$ such that
$T_{\alpha'} (f,g)=(\tilde f,\tilde g)$ uniformly on $\R\times S$ for any compact $S\subset \R^d$.
Denote $\cal D_{r}^{\eqref{SNE}}=\{\mu_1,\ldots,\mu_\kappa\}$ and the separation constant for \eqref{SNE} by $d^{(f,g)}$.
Then there exists a subsequence $\alpha$ of $\alpha'$ such that
$T_\alpha\mu_i(\cdot), i=1,\ldots,\kappa$ exist uniformly on any compact interval and $T_\alpha\mu_i\in \cal D_{r}^{(\tilde f,\tilde g)}$
by the proof of Theorem \ref{key}. 
Note that $\inf_{t\in\R_+}\rho(\mu_i(t),\mu_j(t))\ge d^{(f,g)}$ implies that
\begin{equation}\label{sep}
\inf_{t\in\R_+}\rho(T_\alpha\mu_i(t),T_\alpha\mu_j(t))\ge d^{(f,g)}\qquad
\hbox{for } 1\le i,j\le \kappa \hbox{ and } i\neq j.
\end{equation}
That is, $T_\alpha\mu_i$ are distinct and so $|\cal D_{r}^{(\tilde f,\tilde g)}|\ge \kappa$.
Conversely, it follows from the fact $(f, g) \in H(\tilde f,\tilde g)$ that the above argument holds symmetrically. So
$|\cal D_{r}^{(f,g)}|$ is  no less than $|\cal D_{r}^{(\tilde f,\tilde g)}|$. This enforces that
$|\cal D_{r}^{(\tilde f,\tilde g)}|= |\cal D_{r}^{(f,g)}|$.
In the meantime, it follows from \eqref{sep} that $\cal D_{r}^{(\tilde f,\tilde g)}=\{T_\alpha\mu_1,\ldots,T_\alpha\mu_\kappa\}$
and $d^{(\tilde f,\tilde g)}\ge d^{(f,g)}$, with $d^{(\tilde f,\tilde g)}$ being the separation constant for the equation
$(\tilde f,\tilde g)$. By symmetry, we have $d^{(f,g)}\ge d^{(\tilde f,\tilde g)}$. So $d^{(f,g)}= d^{(\tilde f,\tilde g)}$, which
we denote by $d^{H(f,g)}$.

Thirdly, any $X\in\cal B_r^{\eqref{SNE}}$, with $\mu(\cdot)=\cal L(X(\cdot))$, is asymptotically almost periodic in distribution on $\R_+$.
Indeed, for any sequence $\eta' >0$ with $\eta'_n\rightarrow\infty$, there exists a subsequence
$\eta\subset\eta'$ such that $T_\eta (f,g)$ exists uniformly on $\R\times S$ for any compact $S\subset\R^d$
by the uniform almost periodicity of $f,g$ and $T_\eta \mu$ exists uniformly on any compact interval by
the proof of Theorem \ref{key}. Assume that sequences $\delta >0$, $\beta\subset\eta,\gamma\subset\eta$ are such that
\[
T_{\delta+\beta}\mu=\nu_1 \quad\hbox{and}\quad T_{\delta+\gamma}\mu=\nu_2
\]
exist pointwise on $\mathbb{R}_+$. By taking subsequences of $\delta,\beta,\gamma$ if necessary,  we may assume that
$T_{\delta+\beta}\mu$, $T_{\delta+\gamma}\mu$ exist uniformly on any compact interval of $\R$, and
\begin{align*}
T_{\delta+\beta} (f,g) = T_\delta T_\beta (f,g) = T_\delta T_\eta (f,g), \quad  T_{\delta+\gamma} (f,g) = T_\delta T_\gamma (f,g) = T_\delta T_\eta (f,g)
\end{align*}
uniformly on $\R\times S$ for any compact $S$. So by the proof of Theorem \ref{key} again, $\nu_1$ and $\nu_2$ are
the laws on $\R^d$ of solutions for the same equation $T_\delta T_\eta (f,g)$. Then it follows from the Amerio positive
separation condition that $\nu_1(t)\equiv \nu_2(t)$ or $\rho(\nu_1(t),\nu_2(t))\ge d^{H(f,g)}$ on $\R_+$. That is,
$\mu$ is an asymptotically almost periodic $\cal P(\R^d)$-valued mapping by Proposition \ref{aap-p}. Therefore, $X$ is
asymptotically almost periodic in distribution on $\R_+$.

Finally, the above given $X\in \cal B_r^{\eqref{SNE}}$ is actually almost periodic in distribution. To see this, note that
the almost periodic part $p$ of the law $\mu(\cdot)=\cal L(X(\cdot))$ is indeed the law of some solution of \eqref{SNE}
by Lemma \ref{aaptoap}.  That is, $\mu,p\in \cal D_r^{\eqref{SNE}}$. But $\lim_{t\to +\infty} \rho(\mu(t),p(t))=0$,
so the Amerio semi-separation condition enforces that $\mu(t)\equiv p(t)$ on $\R$. That is, $X$ is almost periodic in distribution.
The proof is complete.
\end{proof}

\begin{rem}
Note that, under the assumptions of Theorem \ref{Amerio}, we have $|\cal D_r^{(\tilde f,\tilde g)}|=|\cal D_r^{(f,g)}|$
for any hull equation $(\tilde f, \tilde g)$ of \eqref{SNE} and all the elements of $\cal D_r^{(\tilde f, \tilde g)}$ are almost
periodic.
\end{rem}

The following trivial separation case is very important in applications.

\begin{cor}\label{Amerio-unique}
Consider \eqref{SNE}. Assume {\bf (H)} and that each hull equation $(\tilde f, \tilde g)$ of \eqref{SNE}
admits a unique distribution in $\cal D_r$, i.e. $|\cal D_r^{(\tilde f, \tilde g)}| =1$. Then
$\cal B_r^{\eqref{SNE}}$ consists of almost periodic in distribution
solutions with the unique common distribution in $\cal D_r^{\eqref{SNE}}$.
\end{cor}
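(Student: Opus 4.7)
The plan is to observe that under the hypothesis the Amerio positive (equivalently, negative) semi-separation condition is vacuously satisfied for every hull equation, so the statement drops straight out of Theorem~\ref{Amerio}. There are no genuine computations to carry out.

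First I would note that taking $(\tilde f, \tilde g) = (f, g)$ in the hypothesis gives $|\cal D_r^{(f,g)}| = 1$, so in particular $\cal B_r^{\eqref{SNE}}$ is non-empty and $\cal D_r^{\eqref{SNE}}$ consists of exactly one distribution $\mu$. Consequently, once one has shown that every element of $\cal B_r^{\eqref{SNE}}$ is almost periodic in distribution, the assertion that they share the common law $\mu$ follows automatically from the definition of $\cal D_r^{\eqref{SNE}}$.

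Next I would verify the Amerio positive semi-separation condition for \eqref{SNE} in $\cal D_r$. Fix an arbitrary hull equation $(\tilde f, \tilde g) \in H(f,g)$; by hypothesis $\cal D_r^{(\tilde f, \tilde g)}$ contains a single element $\tilde\mu$. The requirement that for every other $\nu \in \cal D_r^{(\tilde f, \tilde g)}$ one has $\inf_{t\ge 0} \rho(\tilde\mu(t), \nu(t)) \ge d(\tilde\mu)$ is then vacuous, because no such $\nu$ exists; any positive constant $d(\tilde\mu)$ will do. Hence the Amerio positive semi-separation condition holds for \eqref{SNE} in $\cal D_r$.

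Finally, I would invoke Theorem~\ref{Amerio} directly to conclude that every $X \in \cal B_r^{\eqref{SNE}}$ is almost periodic in distribution, and by the first step its law must be $\mu$. I do not foresee any obstacle: this corollary is precisely the degenerate instance of Theorem~\ref{Amerio} in which the separation condition is trivially satisfied, so all of the real work (passage to the limit via Theorem~\ref{key}, the asymptotic-almost-periodicity argument via Proposition~\ref{aap-p}, and the lift to almost periodicity via Lemma~\ref{aaptoap}) is already packaged inside Theorem~\ref{Amerio}.
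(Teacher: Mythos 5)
Your proposal is correct and matches the paper's intent exactly: the paper offers no separate proof, presenting Corollary~\ref{Amerio-unique} as the ``trivial separation case'' of Theorem~\ref{Amerio}, which is precisely your observation that $|\cal D_r^{(\tilde f,\tilde g)}|=1$ for every hull equation makes the Amerio semi-separation condition vacuously true. Your preliminary remark that $(f,g)\in H(f,g)$ guarantees non-emptiness of $\cal B_r^{\eqref{SNE}}$ and uniqueness of the common law is the right way to finish.
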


\begin{rem}
In the literature, the only applications of deterministic Amerio type theorems to specific models are the trivial separation case, i.e. there
is a unique solution in some given compact subset of $\R^d$ ($\cal D_r$ in our case); see, e.g. \cite{Fink74}. For stochastic equations,
it is certainly interesting to find (if possible) applications in nontrivial separation case.
\end{rem}

One weakness of Amerio type results (Theorem \ref{Amerio} and Corollary \ref{Amerio-unique}) is that they impose hypotheses on all hull equations.
This may be partly remedied by the inheritance property, as stated in the following result.

\begin{thm}\label{Amerio-inherit}
Consider \eqref{SNE} and assume {\bf (H)}. Assume that the property $P$ is negative semi-separating in $\cal D_r^{\eqref{SNE}}$ and inherited in distribution
in $\cal D_r$, and that the number of elements of $\cal D_r^{\eqref{SNE}}$ satisfying property $P$ is finite. Then every element of $\cal D_r^{\eqref{SNE}}$ with
property $P$ is almost periodic. In particular, \eqref{SNE} admits almost periodic in distribution solutions in $\cal B_r$.
\end{thm}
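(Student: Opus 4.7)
The plan is to parallel the proof of Theorem~\ref{Amerio}, adapting it to negative semi-separation and replacing the blanket hypothesis on all hull equations by the inheritance hypothesis on the property~$P$. The crux is to bootstrap the (a priori pair-dependent) negative-separation constant of~$P$ into a single constant that is \emph{universal} across every hull equation, so that the Bochner/Seifert--Fink machinery already used for Theorem~\ref{Amerio} can be run verbatim on the $\R_-$ side.

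Step one promotes negative semi-separation to every hull. Write $\cal D_r^{\eqref{SNE},P} = \{\mu_1,\dots,\mu_\kappa\}$ and set $d := \min_{i\neq j} d^{\mu_i,\mu_j} > 0$. For an arbitrary $(\tilde f,\tilde g) \in H(f,g)$ I would use Proposition~\ref{uap-p2}(iii) in the form $\alpha_n \to -\infty$ with $T_\alpha(f,g)=(\tilde f,\tilde g)$ uniformly on $\R\times S$ for every compact~$S$; a diagonal refinement based on the proof of Theorem~\ref{key} makes every $T_\alpha\mu_i$ exist uniformly on compact intervals, and each limit is the law on $\R^d$ of some solution in $\cal B_r^{(\tilde f,\tilde g)}$. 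Inheritance then attaches property~$P$ to each $T_\alpha\mu_i$ in $\cal D_r^{(\tilde f,\tilde g)}$. Because $\alpha_n \to -\infty$, for each fixed $t \le 0$ we have $t+\alpha_n \le 0$ eventually, so $\rho(\mu_i(t+\alpha_n),\mu_j(t+\alpha_n)) \ge d$ passes to the limit as $\inf_{t \le 0}\rho(T_\alpha\mu_i(t),T_\alpha\mu_j(t)) \ge d$, which shows the limits are pairwise distinct and $|\cal D_r^{(\tilde f,\tilde g),P}| \ge \kappa$. The symmetric argument (Proposition~\ref{uap-p2}(iii) again, producing $(f,g)$ from $(\tilde f,\tilde g)$ via a sequence tending to $-\infty$, plus a second use of inheritance) gives the reverse inequality, so $|\cal D_r^{(\tilde f,\tilde g),P}| = \kappa$ and $d$ is a universal negative-separation constant for all hulls.

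Step two shows every $\mu \in \cal D_r^{\eqref{SNE},P}$ is asymptotically almost periodic in distribution on $\R_-$, via the obvious $\R_-$ analog of Proposition~\ref{aap-p}. Given $\alpha' < 0$ with $\alpha'_n \to -\infty$, extract $\alpha \subset \alpha'$ so that $T_\alpha(f,g)$ exists uniformly on $\R\times S$ and $T_\alpha\mu$ exists uniformly on compact intervals. For any $\delta$ with $\delta_n < 0$ and any $\beta,\gamma \subset \alpha$ producing pointwise limits $T_{\delta+\beta}\mu=h_1$, $T_{\delta+\gamma}\mu=h_2$ on $\R_-$, a further refinement together with Proposition~\ref{uap-p}(iii) yields $T_{\delta+\beta}(f,g) = T_\delta T_\alpha(f,g) = T_{\delta+\gamma}(f,g)$; hence $h_1,h_2$ are laws of solutions of the same hull equation $T_\delta T_\alpha(f,g)$. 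Inheritance gives both $h_1,h_2$ property~$P$ in that hull, and the universal separation from step one (applied in this hull, noting that $\delta_n+\beta_n \to -\infty$) forces either $h_1 \equiv h_2$ on $\R_-$ or $\inf_{t\le 0}\rho(h_1(t),h_2(t)) \ge d$. Taking $d(\alpha)=d/2$ completes the asymptotic-almost-periodicity criterion on $\R_-$. Then the $\R_-$ analog of Lemma~\ref{aaptoap} produces the almost periodic part $\nu$ of $\mu$ as the law of an $L^2$-bounded solution of \eqref{SNE}, with $\lim_{t\to-\infty}\rho(\mu(t),\nu(t))=0$. Since this construction realizes $\nu$ as $T_{-\alpha}T_\alpha\mu$ for a suitable $\alpha_n\to-\infty$, a double application of inheritance (into the hull, then back) equips $\nu$ with property~$P$. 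Therefore $\mu,\nu \in \cal D_r^{\eqref{SNE},P}$, and the negative semi-separation of $P$ on $\cal D_r^{\eqref{SNE}}$ forces $\mu \equiv \nu$, so $\mu$ is almost periodic in distribution; the ``in particular'' clause follows since $\cal L(X(\cdot)) \in \cal D_r^{\eqref{SNE},P}$ whenever $X \in \cal B_r^{\eqref{SNE}}$ satisfies $P$.

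The main obstacle is step one. The inheritance hypothesis is stated only one-directionally, so the argument that $|\cal D_r^{(\tilde f,\tilde g),P}|$ equals $\kappa$ with the same constant $d$ requires symmetric application of Proposition~\ref{uap-p2}(iii) and inheritance in \emph{both} directions, and the choice $\alpha_n \to -\infty$ (rather than $+\infty$) is forced on us because negative semi-separation is not preserved under right shifts that move $\R_-$ out of itself. Once step one is in hand, steps two and three are essentially the $\R_-$-mirror of the argument already used for Theorem~\ref{Amerio} and Lemma~\ref{aaptoap}.
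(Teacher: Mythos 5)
Your proposal is correct and follows essentially the same route as the paper: first make the negative-separation constant among the $P$-elements universal over all hull equations by translating along sequences tending to $-\infty$ and invoking inheritance (with the symmetric pull-back), then run the Theorem \ref{Amerio} argument with $\R_+$ replaced by $\R_-$, using the $\R_-$ version of Proposition \ref{aap-p} and Lemma \ref{aaptoap}. The extra details you supply (the refinement to uniform convergence on compact intervals so inheritance applies to $h_1,h_2$, and the double use of inheritance to give the almost periodic part property $P$) are exactly what the paper's phrase ``the remaining proof is almost the same as that of Theorem \ref{Amerio}'' tacitly requires, so this is a faithful expansion rather than a different argument.
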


\begin{proof}
Assume that the elements of $\cal D_r^{\eqref{SNE}}$ with property $P$
are $\mu_1,\ldots,\mu_\kappa$, so the separation constant depends only on the equation \eqref{SNE}, which we denote by $d^{(f,g)}$.
That is, $\inf_{t\in\R_-}\rho(\mu_i(t),\mu_j(t))\ge d^{(f,g)}$ for $i,j=1,\ldots,\kappa$ and $i\neq j$.

We now show that any hull equation $(\tilde f,\tilde g)$ of \eqref{SNE} also admits $\kappa$ elements of
$\cal D_r^{(\tilde f,\tilde g)}$ with property $P$ and the separation constant for the equation $(\tilde f,\tilde g)$
can be chosen the same as that of  \eqref{SNE}. Indeed, by Proposition \ref{uap-p2} (iii) we may take a sequence $\alpha'$ with $\alpha'_n\to -\infty$ such that
$T_{\alpha'} (f,g)=(\tilde f,\tilde g)$ uniformly on $\R\times S$ for any compact $S\subset \R^d$.
Take a subsequence $\alpha$ of $\alpha'$ such that
$T_\alpha\mu_i(\cdot), i=1,\ldots,\kappa$ exist uniformly on any compact interval and $T_\alpha\mu_i\in \cal D_{r}^{(\tilde f,\tilde g)}$
by the proof of Theorem \ref{key}. Furthermore, $\inf_{t\in\R_-}\rho(T_\alpha\mu_i(t),T_\alpha\mu_j(t))\ge d^{(f,g)}$
for $i\neq j$ and it follows from the inheritance of property $P$ that
each $T_\alpha\mu_i$ also satisfies property $P$. That is, the equation $(\tilde f,\tilde g)$ admits at least $\kappa$ elements of
$\cal D_r^{(\tilde f,\tilde g)}$ with property $P$. By symmetry, there exists a sequence $\beta'$ with $\beta'_n\to-\infty$ such
that $T_{\beta'}(\tilde f,\tilde g)=(f,g)$, so the same argument yields that equation $(\tilde f,\tilde g)$ admits exactly
$\kappa$ elements of $\cal D_r^{(\tilde f,\tilde g)}$ with property $P$ which are $T_\alpha\mu_i, i=1,\ldots,\kappa$. The
symmetric argument also implies that the separation constant among these elements of $\cal D_r^{(\tilde f,\tilde g)}$ with property $P$
can be chosen the same as that of \eqref{SNE}, which we denote by $d^{H(f,g)}$.

The remaining proof is almost the same as that of Theorem \ref{Amerio}, just replacing $\R_+$ by $\R_-$. The proof is complete.
\end{proof}

In applications, the inheritance property is usually checked through stability properties. Since this topic itself deserves
a separate paper, here we will not discuss it further in this direction.

To conclude this section, we give a sufficient condition on the existence of $L^2$-bounded solutions.

\begin{thm}\label{L2exis}
Consider \eqref{SNE} and assume {\bf (H)}. If \eqref{SNE} admits a solution $\varphi$ on $[t_0, \infty)$ with
$\sup_{t\in[t_0, \infty)} \|\varphi(t)\|_2\le M$ for some $t_0\in\R$ and constant $M>0$, then \eqref{SNE} admits an $L^2$-bounded solution $\tilde\varphi$ on $\R$
satisfying $\|\tilde\varphi\|_\infty \le M$.
\end{thm}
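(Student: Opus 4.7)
The plan is to translate the given half-line solution $\varphi$ back in time along an appropriate sequence coming from the almost periodicity of $(f,g)$, and then to compactness-extract a limit that is defined on all of $\R$. This is essentially the same recipe as in the proof of Theorem \ref{key}, with the half-line domains exhausting $\R$ as $n\to\infty$.

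First I would invoke Proposition \ref{uap-p2}(iii) applied to the pair $(f,g)$ viewed as an element of its own hull: this provides a sequence $\alpha$ with $\alpha_n\to+\infty$ such that $T_\alpha f=f$ and $T_\alpha g=g$ uniformly on $\R\times S$ for every compact $S\subset\R^d$. Define the shifted solutions $\varphi_n(t):=\varphi(t+\alpha_n)$; these are defined for $t\ge t_0-\alpha_n$, satisfy $\|\varphi_n(t)\|_2\le M$ wherever defined, and obey
\[
\varphi_n(t)=\varphi_n(s)+\int_s^t f_n(r,\varphi_n(r))\rmd r+\int_s^t g_n(r,\varphi_n(r))\rmd W_n(r),
\]
where $f_n(r,x):=f(r+\alpha_n,x)$, $g_n(r,x):=g(r+\alpha_n,x)$, and $W_n(r):=W(r+\alpha_n)-W(\alpha_n)$ is a Brownian motion. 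By the almost periodicity of $(f,g)$, the pair $(f_n,g_n)$ converges to $(f,g)$ uniformly on $\R\times S$ for any compact $S$, and $f_n,g_n$ share the common Lipschitz and linear-growth constants of $f,g$ (the latter holds by Remark \ref{apbounded}).

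Next, fix any bounded interval $[a,b]$. For $n$ large enough, $[a,b]\subset[t_0-\alpha_n,\infty)$, so the estimate \eqref{l2c} in the proof of Theorem \ref{key} applies verbatim to $\varphi_n$ and yields
\[
E|\varphi_n(t)-\varphi_n(s)|^2\le 4K^2(M^2+1)(b-a+1)(t-s),\qquad a\le s\le t\le b.
\]
Hence the laws $\mu_n(\cdot):=\cal L(\varphi_n(\cdot))$ form an equicontinuous family on $[a,b]$ (eventually), and $\mu_n(t)\in\cal D_M$, which is compact in $\cal P(\R^d)$ by Prohorov and Fatou, as noted in the proof of Theorem \ref{key}. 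Applying the Arzel\`a--Ascoli theorem on each bounded interval and a diagonal extraction, I pass to a subsequence (still denoted $\{\mu_n\}$) that converges uniformly on every compact subinterval of $\R$ to a function $\mu:\R\to\cal P(\R^d)$.

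The remaining step is to show that $\mu$ is the law of some weak solution $\tilde\varphi$ of \eqref{SNE} on $\R$ with $\|\tilde\varphi\|_\infty\le M$; this is exactly the argument in the second half of the proof of Theorem \ref{key}. Namely, fix $a\in\R$; on the probability space furnished by Skorohod representation for $\mu_n(a)\to\mu(a)$, use the approximation theorem (\cite[p.54, Thm 3]{GS}) to obtain convergence of the strong solutions on $[a,b]$ driven by a common Brownian motion, identifying $\mu$ on $[a,b]$ as the law of a solution of $(f,g)$; then use Kunita's stochastic flow inversion to extend this solution to $(-\infty,a]$, matching $\mu$ on $(-\infty,a]$. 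Finally Fatou gives $E|\tilde\varphi(t)|^2\le\liminf_n E|\varphi_n(t)|^2\le M^2$, and a transfer to the fixed $(\Omega,\cal F,P,W)$ via weak uniqueness produces the desired solution $\tilde\varphi$.

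The main obstacle is really notational rather than substantive: Theorem \ref{key} is stated for solutions already defined on $\R$, whereas here the $\varphi_n$ only live on expanding half-lines, and the driving equations vary with $n$. What makes the strategy work is that the half-lines eventually cover every compact interval, the equations converge uniformly on $\R\times S$ with uniform Lipschitz/linear-growth constants, and the law-level compactness argument is intrinsically local in $t$. Thus one essentially re-runs the proof of Theorem \ref{key} with the shifted data, and the conclusion follows.
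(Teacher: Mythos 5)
Your proposal is correct and follows essentially the same route as the paper: translate $\varphi$ by a sequence $\alpha_n\to+\infty$ with $T_\alpha(f,g)=(f,g)$, observe the shifted solutions live on the expanding half-lines $[t_0-\alpha_n,\infty)$ with the same $L^2$-bound, and extract a limit solution on $\R$ via the compactness and identification machinery of Theorem \ref{key} together with a diagonal argument, concluding with Fatou's lemma. The only immaterial difference is that the paper invokes Theorem \ref{key} on each half-line $[a,\infty)$ and diagonalizes over $a\to-\infty$, whereas you re-run its proof with the diagonal extraction placed at the Arzel\`a--Ascoli stage.
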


\begin{proof}
By Proposition \ref{uap-p} (i)--(ii) and Proposition \ref{uap-p2} (iii), we may take a sequence $\alpha$ such that
$T_\alpha(f,g)=(f,g)$ uniformly on $\R\times S$ for any compact $S\subset \R^d$,
with $\alpha_n\to+\infty$ as $n\to\infty$.
Note that for any $t\ge s\ge t_0$
\[
\varphi(t) = \varphi(s) + \int_{s}^t f(r,\varphi(r))\rmd r + \int_{s}^t g(r,\varphi(r)) \rmd W(r)
\]
for some Brownian motion $W$. Similar to the proof of Lemma \ref{limit}, let $\varphi_n, f_n, g_n, W_n$ be defined the same as there.
Then $\varphi_n$ is defined on $[t_0-\alpha_n, \infty)$, satisfies $\sup_{r\in [t_0-\alpha_n, \infty)}\|\varphi_n(r)\|_2\le M$ and
\[
\varphi_n(t) = \varphi_n(s) + \int_s^t f_n(r,\varphi_n(r))\rmd r + \int_s^t g_n(r,\varphi_n(r)) \rmd W_n(r)
\]
for any $t\ge s \ge t_0-\alpha_n$.

For any fixed $a\in\R$, $\varphi_n$ is defined on $[a, \infty)$ when $n$ is large, and by Theorem \ref{key} there is
a subsequence of $\{\varphi_n\}$ which converges in distribution, uniformly on compact intervals of $[a, \infty)$,
to some $\tilde\varphi$ as $n\to\infty$ which satisfies
the limit equation (i.e. \eqref{SNE} itself) on $[a, \infty)$, i.e. for any $a\le s\le t$
\[
\tilde\varphi(t) = \tilde\varphi(s) + \int_s^t  f(r,\tilde\varphi(r))\rmd r + \int_s^t  g(r,\tilde\varphi(r)) \rmd \tilde W(r)
\]
for some Brownian motion $\tilde W$. By choosing $a$ to be a sequence converging to $-\infty$ and the standard diagonal method,
we may assume that the limit $\tilde\varphi$
satisfies \eqref{SNE} on $\R$. The Fatou's lemma implies that $\|\tilde \varphi\|_\infty \le \sup_{t\in[t_0, \infty)}\|\varphi(t)\|_2\le M$.
The proof is complete.
\end{proof}

\section{Applications}

In this section, we give some applications of our results.

\begin{thm}\label{Amerio-Lya}
Consider \eqref{SNE} and assume {\bf (H)}. Assume that the function $V:\R\times \R^d\to \R_+$ is $C^2$ in $t\in \R$ and $C^3$ in $x\in\R^d$,
and that the differentials $D^iV$ of $V$ with $i=0,1,2$ and the derivatives $V_{tx_ix_j}$, $V_{x_ix_jx_k}$, $i,j,k=1,\ldots, d$
are bounded on $\R\times S$ for any compact $S\subset\R^d$. Assume further that $V$ satisfies:
\begin{align}
\mathscr{L}V(t,x-y):= & \frac{\partial V}{\partial t}(t,x-y)+\sum_{i=1}^d(f_i(t,x)-f_i(t,y))\frac{\partial V}{\partial x_i}(t,x-y)\notag\\
&~ +\frac{1}{2}\sum_{i,j=1}^d \sum_{l=1}^m (g_{il} (t,x)-g_{il}(t,y))\frac{\partial^2V}{\partial x_i\partial x_j}(t,x-y) (g_{jl}(t,x) -g_{jl}(t,y)) \notag\\
\geq & a(|x-y|) \quad \text{ for all } t\in\R \text{ and } x,y\in\R^d, \tag{$h_0$}
\end{align}
\begin{equation}
\inf_{t\in \R}V(t,x)>0\text{ for each $x\neq 0$, and $V(t,0)=0$ for any $t\in \R$,} \tag{$h_1$}
\end{equation}
and
\begin{equation}
V(t,x)\leq b|x|^2+c\text{ for all $ (t,x)\in\R\times\mathbb{R}^d$ and some positive constants $b,c$,} \tag{$h_2$}
\end{equation}
where $a:\mathbb{R}_+\rightarrow\mathbb{R}_+$ is continuous, $a(0)=0$, $a(r)>0$ for $r>0$ and $\liminf_{r\rightarrow \infty}a(r)>0$.
Then $\cup_{r>0}\cal {D}_r^{\eqref{SNE}}$ is empty or consists of a unique element which is almost periodic in $t$.
\end{thm}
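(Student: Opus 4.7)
The plan is to reduce the statement to showing that every hull equation $(\hat f,\hat g)\in H(f,g)$ admits at most one law in $\bigcup_{r>0}\cal D_r^{(\hat f,\hat g)}$; once this is secured, Corollary~\ref{Amerio-unique} immediately yields almost periodicity in distribution of the unique (if any) element.

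To establish the uniqueness, let $X_1,X_2$ be two $L^2$-bounded solutions of a fixed hull equation. By Remark~\ref{comm-key} I may realize both as strong solutions on a common filtered probability space driven by the same Brownian motion $W$; set $Y:=X_1-X_2$. Applying It\^o's formula to $V(t,Y(t))$ and invoking $(h_0)$, the finite-variation part of $V(t,Y(t))$ dominates $a(|Y(t)|)\ge 0$, so $\phi(t):=E\,V(t,Y(t))$ is non-decreasing; hypothesis $(h_2)$ together with the $L^2$-boundedness of $X_1,X_2$ forces $\phi$ to be bounded on $\R$. Consequently,
\[
\int_{-\infty}^{+\infty} E\,a(|Y(r)|)\,dr \;\le\; \phi_{+\infty}-\phi_{-\infty}\;<\;+\infty.
\]
Combined with the structural assumptions on $a$ (in particular $\liminf_{r\to\infty}a(r)>0$ and $a>0$ on $(0,\infty)$), this integrability yields a sequence $r_n\to+\infty$ with $E\,a(|Y(r_n)|)\to 0$, whence $|Y(r_n)|\to 0$ in probability.

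Next, I would exploit the uniform almost periodicity of $(f,g)$ together with the local bounds on $V$, $D^iV$, $V_{tx_ix_j}$ and $V_{x_ix_jx_k}$ to extract, via Arzel\`a--Ascoli and a diagonal procedure, a subsequence (still denoted $r_n$) along which $T_{r_n}(f,g)\to(\tilde f,\tilde g)$ uniformly on compacts and $V(\cdot+r_n,\cdot)\to\tilde V$ in the $C^{2,3}$-topology on compacts of $\R\times\R^d$. The limit $\tilde V$ inherits $(h_0)$, $(h_1)$, $(h_2)$ with the same $a$, so it is a Lyapunov function for the hull equation $(\tilde f,\tilde g)$. Applying Theorem~\ref{key} to the joint $2d$-dimensional system solved by $(X_1,X_2)$, a further subsequence of $(X_1,X_2)(\cdot+r_n)$ converges in distribution, uniformly on compact intervals, to a pair $(\tilde X_1,\tilde X_2)$ of $L^2$-bounded solutions (in joint form) of the limit equation. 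Since $|Y(r_n)|\to 0$ in probability, $\tilde X_1(0)=\tilde X_2(0)$ a.s., and the global Lipschitz property plus the Kunita backward-flow argument used in Lemma~\ref{homo} propagates this equality to all of $\R$, giving $\tilde X_1\equiv\tilde X_2$ a.s.

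The same Lyapunov/shift machinery applies inside any hull equation --- the family of shifted Lyapunov functions $\tilde V$ covers $H(f,g)$ --- so any two $L^2$-bounded laws within a given hull equation must coincide. Hence $|\bigcup_{r>0}\cal D_r^{(\hat f,\hat g)}|\le 1$ for every $(\hat f,\hat g)\in H(f,g)$, and Corollary~\ref{Amerio-unique} delivers the almost periodicity.

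The main obstacle is closing the gap between ``every shifted limit pair $(\tilde X_1,\tilde X_2)$ satisfies $\tilde X_1\equiv\tilde X_2$'' and ``$\cal L(X_1(t))=\cal L(X_2(t))$ for every $t$''. The delicate point is a uniform-integrability / submartingale-convergence argument for $V(t,Y(t))$: its almost-sure Doob limit is forced to be zero by the shift-limit analysis, after which the non-decreasing property of $\phi$ combined with $(h_1)$ yields $\phi\equiv 0$ and thus $Y\equiv 0$ a.s.
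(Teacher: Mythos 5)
Your overall architecture matches the paper's: prove uniqueness of the $L^2$-bounded solution for the equation and for every hull equation via the Lyapunov function (transporting $(h_0)$--$(h_2)$ to the hulls by Arzel\`a--Ascoli applied to the shifts of $V$ and its derivatives), then invoke Corollary \ref{Amerio-unique}. The first half of your uniqueness argument is also the paper's: realize the two solutions as strong solutions with a common Brownian motion (Remark \ref{comm-key}), apply It\^o's formula to $V(t,Y(t))$ with $Y=X_1-X_2$, use $(h_0)$ to see that $\phi(t)=E\,V(t,Y(t))$ is nondecreasing and, by $(h_2)$ and $L^2$-boundedness, bounded, and extract $t_n\to+\infty$ with $E\,a(|Y(t_n)|)\to 0$, hence $|Y(t_n)|\to 0$ in probability.

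The genuine gap is exactly where you flag it, and it is the crux. At this point the paper does \emph{not} pass to a shifted limit equation: it concludes directly that $E\,V(t_n,Y(t_n))\to 0$ (dominated convergence together with $(h_1)$--$(h_2)$; note also that the assumed local bounds on $DV$ give $\sup_t V(t,x)\le C_S|x|$ for $x$ in a compact $S$, so $V(t_n,Y(t_n))\to 0$ in probability), and then the monotonicity inequality $E\,V(t_0,Y(t_0))\le E\,V(t_n,Y(t_n))$ forces $E\,V(t_0,Y(t_0))=0$, whence $Y(t_0)=0$ a.s.\ by $(h_1)$ and $Y\equiv 0$ by pathwise uniqueness (Remark \ref{comm-key}(ii)). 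Your detour --- applying Theorem \ref{key} to the joint $2d$-dimensional system along $r_n$, identifying the limit pair $(\tilde X_1,\tilde X_2)$ and showing $\tilde X_1\equiv\tilde X_2$ via the Kunita flow --- only yields uniqueness in the \emph{limit} hull equation and, as you acknowledge, does not transfer back to $\mathcal L(X_1(t))=\mathcal L(X_2(t))$ for the original equation; your proposed repair via a ``submartingale-convergence/Doob limit'' for $V(t,Y(t))$ is not carried out and runs into precisely the uniform-integrability issue you name (an a.s.\ limit equal to zero does not by itself give $\phi(t_n)\to 0$, which is what the monotonicity argument needs). So the missing idea is the paper's short closing step: convergence of the \emph{expectations} $E\,V(t_n,Y(t_n))$ to zero combined with monotonicity of $\phi$, after which the entire hull-limit machinery in your uniqueness step becomes unnecessary.
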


\begin{proof}
We divide the proof into 3 steps.

{\em Step 1. Uniqueness of strong $L^2$-bounded solutions.} Assume that ${X}$ and ${Y}$ are two strong ${L}^2$-bounded
solutions of \eqref{SNE} on $\R$ for given Brownian motion $W$.
Fix $t_0\in \R$. Then for any $t\geq t_0$,
\begin{align*}
X(t)-Y(t)=& X(t_0)-Y(t_0) + \int_{t_0}^t [ f(s,X(s))-f(s,Y(s))]\rmd s \\
&\quad + \int_{t_0}^t [ g(s,X(s))-g(s,Y(s))]\rmd W(s).
\end{align*}
We define a sequence of stopping times as follows:
\[
\tau_n :=\inf\{t\ge t_0: \max\{|X(t)|, |Y(t)|\} \ge n\}.
\]
Then we have by It\^o's formula and $(h_0)$
\begin{align*}
E V(t\wedge\tau_n, X(t\wedge\tau_n)-Y(t\wedge\tau_n)) & = E V(t_0,X(t_0)-Y(t_0)) + E \int_{t_0}^{t\wedge\tau_n}\mathscr{L}V(s,X(s)-Y(s))\rmd s \nonumber\\
& \ge E V(t_0,X(t_0)-Y(t_0))+ E \int_{t_0}^{t\wedge\tau_n} a (|X(s)-Y(s)|) \rmd s.
\end{align*}
Noting that $\tau_n\to \infty$ almost surely as $n\to \infty$, we have
\begin{equation}\label{LV}
E V(t, X(t)-Y(t))\ge E V(t_0,X(t_0)-Y(t_0))+ E \int_{t_0}^t a (|X(s)-Y(s)|)\rmd s.
\end{equation}
In particular,
\begin{equation}\label{V-mono}
{E}V(t,X(t)-Y(t))\geq E V(t_0,X(t_0)-Y(t_0)).
\end{equation}

By the ${L}^2$-boundedness of $X(t)$, $Y(t)$ and ($h_2$), the limit $\lim_{t\to \infty} {E}V(t,X(t)-Y(t))$ exists. This together with \eqref{LV}
implies that
\[
\lim_{n\to \infty} E \int_{n}^{\infty} a(|X(s)-Y(s)|)\rmd s =0.
\]
Since $a$ is nonnegative, there exists a sequence $\{t_n\}$, with $t_n\to \infty$ as $n\to \infty$,
such that $\lim_{n\to\infty}E a(|X(t_n)-Y(t_n)|) =0$. Since the function $a$ only vanishes at $0$ and $\liminf_{r\to \infty} a(r) >0$, this
enforces that $\lim_{n\to\infty} |X(t_n)-Y(t_n)| =0$ in probability.
So the Lebesgue dominated convergence theorem and $(h_1)$-$(h_2)$ yield that
\[
\lim_{n\to\infty} {E}V(t_n,X(t_n)-Y(t_n)) =0,
\]
which implies $E V(t_0,X(t_0)-Y(t_0))=0$ by \eqref{V-mono} and the non-negativeness of $V$. So $X(t_0)=Y(t_0)$ almost surely by $(h_1)$ again.
Since $f,g$ are global Lipschitz, we have $X(t)=Y(t)$ on $\R$ almost surely by Remark \ref{comm-key} (ii).

{\em Step 2. Convergence of $V$ and inheritance of $(h_0)$-$(h_2)$.}
For given sequence $\alpha'$, let $V_n(\cdot,\cdot):= V(\cdot+\alpha'_n,\cdot)$.
For any compact interval $I\subset\R$ and compact subset $S\subset\R^d$, since $V,V_t,V_{x_i}$ are bounded on
$\R\times S$, $V_n$ are uniformly bounded and equi-continuous on $I\times S$. So it follows from the Arzela-Ascoli theorem
that there exists a subsequence $\alpha$ of $\alpha'$ so that $T_\alpha V=\lim_{n\to\infty} V(\cdot+\alpha_n,\cdot)$
uniformly exists on $I\times S$; by the
diagonalization argument, the subsequence $\alpha$ may be chosen such that $T_\alpha V$ uniformly exists on any
compact subsets of $\R\times\R^d$. In the same way, by the hypothesis on the bounded differentials and derivatives of $V$,
the subsequence $\alpha$ can be further chosen such that $T_\alpha V_t$, $T_\alpha V_{x_i}$, $T_\alpha V_{x_ix_j}$
(the meaning of these notations is like $T_\alpha V$) uniformly exists on any compact subsets of $\R\times\R^d$.

Since $T_\alpha V$ and $T_\alpha V_t$ uniformly exist
on any compact subsets of $\R\times\R^d$, it follows that $\frac{\partial T_\alpha V}{\partial t}= T_\alpha V_t$
on $\R\times\R^d$. Similarly, we have
$\frac{\partial T_\alpha V}{\partial x_i} = T_\alpha V_{x_i}$ and  $\frac{\partial T_\alpha V_{x_i}}{\partial x_j} = T_\alpha V_{x_ix_j}$
on  $\R\times\R^d$ for $i,j=1,\ldots,d$, which implies further that $\frac{\partial^2 T_\alpha V}{\partial x_i \partial x_j}= T_\alpha V_{x_ix_j}$
on $\R\times\R^d$.

Since the above sequence $\alpha'$ is arbitrary, for given $(\tilde f, \tilde g)\in H(f,g)$, we
may assume that $\alpha'$ is such that $T_{\alpha'}(f,g)=(\tilde f, \tilde g)$ uniformly on $\R\times S$
for any compact $S\subset\R^d$. So, in this case we have
\[
T_\alpha (\mathscr LV) = \mathscr L_{T_\alpha(f,g)}(T_\alpha V) \quad \hbox{on } \R\times \R^d
\]
with
\begin{align*}
\mathscr L_{T_\alpha(f,g)} (T_\alpha V) & =  \frac{\partial T_\alpha V}{\partial t}(t,x-y)
+\sum_{i=1}^d(T_\alpha f_i(t,x)-T_\alpha f_i(t,y))\frac{\partial T_\alpha V}{\partial x_i}(t,x-y)\\
& +\frac{1}{2}\sum_{i,j=1}^d \sum_{l=1}^m (T_\alpha g_{il} (t,x)-T_\alpha g_{il}(t,y))
\frac{\partial^2T_\alpha V}{\partial x_i\partial x_j}(t,x-y) (T_\alpha g_{jl}(t,x) -T_\alpha g_{jl}(t,y)).
\end{align*}

It is immediate to see that $T_\alpha V$ satisfies $(h_1)$-$(h_2)$, and $\mathscr L_{T_\alpha(f,g)}(T_\alpha V)\ge a(|x-y|)$, i.e.
$(h_0)$ also holds with $T_\alpha V$, $T_\alpha f$, $T_\alpha g$ replacing $V,f,g$, respectively.

{\em Step 3. Conclusion. }  Consider the hull equation
\begin{equation*}
\rmd X = T_{\alpha}f(t,X)\rmd t+ T_{\alpha}g(t,X)\rmd W.
\end{equation*}
Clearly the hypothesis {\bf (H)} holds for this equation, so the same argument as in Step 1 implies that
this hull equation admits a unique strong $L^2$-bounded solution on $\R$ for given Brownian motion $W$.

By the proof of Theorem \ref{key}, we know that if the unique strong $L^2$-bounded solution $X$ of \eqref{SNE} satisfies
$\|X\|_\infty\le r_0$ for some $r_0>0$, then the unique $L^2$-bounded solutions of the hull equations are also bounded
with the same $r_0$. Note that the pathwise uniqueness implies the uniqueness of laws
on $\R^d$, so $|\cal D_{r_0}^{T_\alpha(f,g)}|=|\cal D_{r_0}^{(f,g)}|\le 1$ and hence $|\cup_{r>0}\cal D_r^{T_\alpha(f,g)}|=|\cup_{r>0}\cal D_r^{(f,g)}|\le 1$.
The result now follows from Corollary \ref{Amerio-unique}.
\end{proof}


Now we give some applications of Theorem \ref{Amerio-Lya}.

\begin{cor}
Consider one dimensional linear stochastic differential equation with $m$-dimensional Brownian motion:
\begin{equation}\label{ex2}
\rmd X(t)=(A(t)X(t)+f(t))\rmd t+ \sum_{i=1}^m (B_i(t)X(t)+g_i(t)) \rmd W_i(t),
\end{equation}
where $A$, $f$, $B_i$, $g_i$ are almost periodic functions. If there exists
some constant $c>0$ such that $2 A(t)+\sum_{i=1}^m B_i^2(t)\geq c$ for all $t\in \R$, then
all the ${L}^2$-bounded solutions of \eqref{ex2} share the same distribution on $\R$ which is almost periodic.
\end{cor}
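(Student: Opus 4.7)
The plan is to invoke Theorem \ref{Amerio-Lya} with the time-independent quadratic Lyapunov function $V(t,x)=x^2$. The coercivity assumption $2A(t)+\sum_{i=1}^m B_i^2(t)\geq c$ is tailor-made to deliver $(h_0)$, and the remaining hypotheses will be essentially automatic for this simple $V$ and the affine coefficients of \eqref{ex2}.

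First I would verify that \eqref{ex2} satisfies hypothesis $\mathbf{(H)}$. Writing $F(t,x):=A(t)x+f(t)$ and $G_i(t,x):=B_i(t)x+g_i(t)$, the affine-in-$x$ structure together with the almost periodicity of $A,f,B_i,g_i$ and the boundedness from Remark \ref{apbounded} makes Bochner's characterization (Proposition \ref{uap-p}) immediate: one extracts a common subsequence on which the scalar coefficients converge uniformly, and uniform convergence of $F$ and $G_i$ on $\R\times S$ for each compact $S\subset\R$ follows. The Lipschitz constants in $x$ are $\sup_t|A(t)|$ and $\sup_t|B_i(t)|$, both finite. As for $V(t,x)=x^2$, it is smooth with $V_t\equiv 0$, $V_x=2x$, $V_{xx}=2$ and all third-order $x$-derivatives zero, so the boundedness requirements on $\R\times S$ for compact $S\subset\R$ are trivial. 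Condition $(h_1)$ is immediate since $V(t,0)=0$ and $V(t,x)=x^2>0$ for $x\neq 0$, while $(h_2)$ holds because $V(t,x)\leq|x|^2+1$.

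The core computation is $(h_0)$. Using $F(t,x)-F(t,y)=A(t)(x-y)$ and $G_i(t,x)-G_i(t,y)=B_i(t)(x-y)$, substitution into the generator gives
\[
\mathscr{L}V(t,x-y)=2A(t)(x-y)^2+\sum_{i=1}^m B_i^2(t)(x-y)^2=\Bigl(2A(t)+\sum_{i=1}^m B_i^2(t)\Bigr)(x-y)^2\geq c\,(x-y)^2,
\]
so $(h_0)$ is satisfied with $a(r)=cr^2$, which meets $a(0)=0$, $a(r)>0$ for $r>0$, and $\liminf_{r\to\infty}a(r)=+\infty$. Theorem \ref{Amerio-Lya} then yields that $\bigcup_{r>0}\cal D_r^{\eqref{ex2}}$ is empty or a singleton whose unique element is almost periodic in $t$. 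Since any $L^2$-bounded solution $X$ satisfies $\cal L(X(\cdot))\in\cal D_{\|X\|_\infty}^{\eqref{ex2}}\subset\bigcup_{r>0}\cal D_r^{\eqref{ex2}}$, all such solutions must share this common almost periodic distribution, which is exactly the conclusion. There is no genuine obstacle in the argument; the only mildly delicate point is the uniform (rather than merely pointwise-in-$x$) almost periodicity of the affine coefficients, which the Bochner criterion dispatches immediately thanks to the linearity of the $x$-dependence.
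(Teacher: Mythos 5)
Your proposal is correct and takes essentially the same approach as the paper: both apply Theorem \ref{Amerio-Lya} with a quadratic Lyapunov function, the only difference being that you use $V(t,x)=x^2$ while the paper uses $V(t,x)=x^2\exp\{\arctan t\}$, whose extra factor merely adds the harmless positive term $\frac{1}{t^2+1}V(t,x-y)$ to $\mathscr{L}V$ and is not needed. Your verifications of {\bf (H)}, $(h_0)$--$(h_2)$ and the final passage from the uniqueness of the element of $\cup_{r>0}\cal D_r^{\eqref{ex2}}$ to the stated conclusion are all sound.
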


\begin{proof}
Consider the nonnegative function $V(t,x)=x^2\exp\{\arctan t\}$ for $t\in\R$ and $x\in \R$.
Note that $(h_1)$-$(h_2)$ hold and
the derivatives of $V$ satisfy the boundedness condition on $\R\times S$ for any compact subset $S\subset\R$,
which are required in Theorem \ref{Amerio-Lya}.
Since
\begin{align*}
\mathscr{L}V(t,x-y) & =\frac{1}{t^2+1}V(t,x-y)+(2 A(t)+\sum_{i=1}^m B_i^2(t))V(t,x-y)\\
& \geq cV(t,x-y)\geq c \rme^{-\pi/2} (x-y)^2,
\end{align*}
the condition $(h_0)$ holds.  The result now follows from Theorem \ref{Amerio-Lya}.
\end{proof}

\begin{cor}
Consider one dimensional stochastic differential equation with $m$-dimensional Brownian motion:
\begin{equation}\label{ex1}
\rmd X(t)=f(t,X(t))\rmd t+\sum_{i=1}^m g_i(t)\rmd W_i(t),
\end{equation}
where $g_i$ are almost periodic, $f(t,x)$ is uniformly almost periodic and global Lipschitz in $x$.
If there is a constant $L_0>0$ such that
\begin{equation}\label{con-f}
L_0(x-y)^2\leq (f(t,x)-f(t,y))(x-y)\quad \text{ for all } t,x,y\in\R,
\end{equation}
then all the ${L}^2$-bounded solutions of \eqref{ex1} share the same distribution on $\R$ which is almost periodic.
\end{cor}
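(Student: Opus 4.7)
My plan is to apply Theorem \ref{Amerio-Lya} to equation \eqref{ex1} with a suitably chosen Lyapunov function. Since the diffusion coefficients $g_i(t)$ are independent of $x$, the second-order terms in the operator $\mathscr{L}V$ acting on differences $x-y$ vanish identically, which makes a simple quadratic Lyapunov function sufficient.

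Concretely, I would take $V(t,x) = x^{2}\exp\{\arctan t\}$. I first check the hypotheses on $V$ required by Theorem \ref{Amerio-Lya}. Since $e^{-\pi/2}\le \exp\{\arctan t\}\le e^{\pi/2}$, we have $e^{-\pi/2}x^2\le V(t,x)\le e^{\pi/2}x^2$, which immediately yields $(h_1)$ and $(h_2)$ (with $b=e^{\pi/2}$, $c=0$). The derivatives $V_t, V_x, V_{xx}, V_{tx}, V_{txx}, V_{xxx}$ are all products of polynomials in $x$ with bounded functions of $t$ (namely $\exp\{\arctan t\}$ and $\exp\{\arctan t\}/(t^2+1)$), so they are bounded on $\R\times S$ for every compact $S\subset\R$, as required.

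Next, I compute $\mathscr{L}V(t,x-y)$. Because $g_i(t,x)=g_i(t)$ does not depend on $x$, the entire It\^o correction drops out, leaving
\begin{align*}
\mathscr{L}V(t,x-y) &= \frac{\exp\{\arctan t\}}{t^{2}+1}(x-y)^{2} + 2(x-y)\exp\{\arctan t\}\bigl(f(t,x)-f(t,y)\bigr)\\
&\geq 2\exp\{\arctan t\}\cdot L_{0}(x-y)^{2}\\
&\geq 2L_{0}e^{-\pi/2}(x-y)^{2},
\end{align*}
where the first inequality uses \eqref{con-f} (and discards the nonnegative $\partial_{t}V$ term). Thus $(h_{0})$ holds with $a(r):=2L_{0}e^{-\pi/2}r^{2}$, which clearly satisfies $a(0)=0$, $a(r)>0$ for $r>0$, and $\liminf_{r\to\infty}a(r)=\infty$.

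With all hypotheses of Theorem \ref{Amerio-Lya} verified, I conclude that $\bigcup_{r>0}\cal D_{r}^{\eqref{ex1}}$ is either empty or a singleton consisting of an almost periodic $\cal P(\R)$-valued map. In particular, every $L^{2}$-bounded solution of \eqref{ex1} has the same distribution on $\R$, and this common distribution is almost periodic. I do not anticipate any real obstacle here: the only nontrivial ingredient is the observation that additive noise eliminates the second-derivative contribution of $V$ to $\mathscr{L}V(t,x-y)$, so the monotonicity assumption \eqref{con-f} alone suffices to dominate the entire operator.
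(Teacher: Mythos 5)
Your proof is correct and takes essentially the same route as the paper: verify the Lyapunov hypotheses of Theorem \ref{Amerio-Lya} for \eqref{ex1} (noting that the additive noise kills the second-order term in $\mathscr{L}V$) and invoke that theorem. The only difference is the choice of $V$: the paper uses $V(t,x)=\exp\{\arctan t\}\ln(x^2+1)$, giving the bounded minorant $a(r)=2L_0e^{-\pi/2}r^2/(r^2+1)$, while you reuse the quadratic $V(t,x)=x^2\exp\{\arctan t\}$ from the linear corollary; your $V$ also meets $(h_0)$--$(h_2)$ and the boundedness requirements on $\R\times S$, so the verification goes through (only a cosmetic point: in $(h_2)$ take $c=1$, say, since the constant there is required to be positive).
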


\begin{proof}
Consider the function $V:\mathbb{R}\times\mathbb{R}\rightarrow\mathbb{R}_+$ given by
\[
V(t,x)=\exp\{\arctan t\}\ln (x^2+1).
\]
It is immediate to check that $(h_1)$-$(h_2)$ hold and the derivatives of $V$ satisfy the boundedness condition on $\R\times S$ for any compact subset $S\subset\R$.
On the other hand,
\begin{equation*}
\begin{split}
\mathscr{L}V(t,x-y)= & \frac{1}{t^2+1}V(t,x-y)+\frac{2(x-y)(f(t,x)-f(t,y))\exp\{\arctan t\}}{(x-y)^2+1}\\
\geq & \frac{2L_0(x-y)^2\exp\{\arctan t\}}{(x-y)^2+1}.
\end{split}
\end{equation*}
That is, $(h_0)$ also holds. So the result follows from Theorem \ref{Amerio-Lya}.
\end{proof}

\section{Some discussions and improvements}

From the viewpoint of differential equations and dynamical systems, it is very natural to consider how
the distribution of solutions evolves with the time, as we did in previous sections. Almost periodicity
is an important recurrent property (in the sense of Birkhoff \cite{Bir}), which roughly means that the motion will
turn back repeatedly with any preassigned
small error. When an equation, with recurrent solutions (motions), is stochastically perturbed, does
the perturbed equation still admit recurrent motions in some sense? It is one of our main motivations to partly answer this problem;
and it seems that it is appropriate, by the results in previous sections, to consider the recurrent motions in
distribution sense on $\R^d$. However, some probabilists may prefer to consider properties they are interested in on path spaces,
i.e. they think that properties for sample functions are  more probabilistic. Due to this, in this section we discuss
the almost periodicity of solutions on the path space; we note that the similar concept was considered by Da Prato and Tudor \cite{DT}.

In Sections 3 and 4, we proved that under the Favard or Amerio separation condition (besides other conditions),
equations \eqref{SLE} and \eqref{SNE} admit solutions which are almost periodic in distribution
on $\R^d$. Indeed, under the Favard condition or the trivial Amerio separation condition, these
solutions are almost periodic in distributions on the path space $C(\R,\R^d)$. It is well-known that $C(\R,\R^d)$
is a separable complete metric space with the metric
\[
d(\omega_1,\omega_2)=\sum_{n=1}^\infty \frac{1}{2^n} \min\left\{1, \sup_{-n\le t\le n} |\omega_1(t)-\omega_2(t)|\right\},
\]
i.e. the convergence on the path space means the uniform convergence on any compact interval.

For any solution $X$ of \eqref{SNE}, it determines a distribution on $C(\R,\R^d)$.  Denote the shift mapping
\[
\hat\mu: \R\to \cal P(C(\R,\R^d)),~ t\mapsto \hat\mu(t):=\cal L(X(t+\cdot)),
\]
where $\cal P(C(\R,\R^d))$ stands for the space of probability measures on the path space and $\cal L(X(t+\cdot))$
means the law of the $C(\R,\R^d)$-valued random variable $X(t+\cdot)$. Note that $\cal P(C(\R,\R^d))$ is a
separable complete metric space (see, e.g. \cite[Chapter II, Theorems 6.2 and 6.5]{P}). The solution $X$ is said to be
{\em almost periodic in strong distribution sense} if $\hat \mu$ is a $\cal P(C(\R,\R^d))$-valued almost periodic mapping.
It is clear that if $X$ is almost periodic in strong distribution sense, then it is almost periodic in distribution.

Firstly the result of Theorem \ref{key} can be improved.

\medskip
\noindent {\bf Theorem \ref{key}'.}
{\em Consider the following family of It\^o stochastic equations on $\R^d$
\[
\rmd X = f_n(t,X)\rmd t + g_n(t,X)\rmd W, \quad n=1,2,\cdots,
\]
where $f_n$ are $\R^d$-valued, $g_n$ are $(d\times m)$-matrix-valued, and
$W$ is a standard $m$-dimensional Brownian motion.
Assume that $f_n, g_n$ satisfy the conditions of global Lipschitz and linear growth with common Lipschitz and linear growth constants;
that is, there are constants $L$ and $K$, independent of $t\in\mathbb R$ and $n\in\N$, such that for all $x,y\in\mathbb R^d$
\begin{align*}
&|f_n(t,x)-f_n(t,y)|\vee |g_n(t,x)-g_n(t,y)|\le L|x-y|,\\
& |f_n(t,x)|\vee |g_n(t,x)|\le K (1+|x|).
\end{align*}
Assume further that $f_n\to f$, $g_n\to g$ pointwise on $\mathbb R\times \mathbb R^d$ as $n\to\infty$ and that
$X_n\in \cal B_{r_0}^{(f_n,g_n)}$ for some constant $r_0$, independent of $n$. Then there is a subsequence of $\{X_n\}$ which
converges, in strong distribution sense (the meaning is obvious), to some $X\in \cal B_{r_0}^{(f,g)}$.}

\begin{proof}
We only need to point out the difference from the proof of Theorem \ref{key}.
Note that, in the proof of Theorem \ref{key}, \eqref{appr} actually
implies that $\tilde X_n$ converges in distribution to $\tilde X$ on $C([a,b],\R^d)$.
Since the interval $[a,b]$ is arbitrary, the convergence indeed occurs on $C(\R,\R^d)$ in distribution.
\end{proof}

By Theorem \ref{key}' and minor revising the proof of Lemma \ref{uniap}, we have the following result.

\medskip
\noindent {\bf Lemma \ref{uniap}'.}
{\em Assume that each hull equation of \eqref{SNE} admits a unique minimal solution in strong distribution sense, i.e. all the minimal
solutions of the given hull equation possess the same law on the path space. Then these minimal solutions are almost
periodic in strong distribution sense.}
\medskip

Therefore, we have the following

\medskip
\noindent {\bf Theorem A'.} {\em
Consider \eqref{SLE} with the coefficients $A, B_1,\ldots, B_m$, $f,g_1,\ldots,g_m$ being almost periodic.
Assume further that \eqref{SLE} admits an $L^2$-bounded solution $X$, and that the
Favard condition holds for \eqref{SLE}. Then \eqref{SLE} admits a solution which is almost periodic in strong distribution sense.}

\begin{proof}
The theorem follows immediately from Lemmas \ref{exis}, \ref{Fav}, Remark \ref{rem-Fav}, Corollary \ref{mini}, and Lemma \ref{uniap}'.
\end{proof}

\begin{rem}
Note that Remark \ref{BN} and Corollary \ref{thAco} can be correspondingly improved by Theorem A'.
\end{rem}

For the nonlinear equation \eqref{SNE}, under the trivial Amerio separation condition, we have the following

\medskip
\noindent {\bf Corollary \ref{Amerio-unique}'.}
{\em Consider \eqref{SNE}. Assume {\bf (H)} and that each hull equation $(\tilde f, \tilde g)$ of \eqref{SNE}
admits, in $\cal B_r^{(\tilde f,\tilde g)}$, a unique distribution in strong sense, i.e.
all the elements of $\cal B_r^{(\tilde f,\tilde g)}$ share the same distribution on the path space. Then
$\cal B_r^{\eqref{SNE}}$ consists of solutions which are almost periodic in strong distribution sense
with the unique common distribution on the path space.}

\begin{proof}
The proof is completely similar to that of Lemma \ref{uniap}' since only the uniqueness of distribution on the
path space is essential in the proof.
\end{proof}

\begin{rem}
We note that the conclusion of Theorem \ref{Amerio-Lya} (and hence its applications) can be correspondingly
improved by Corollary \ref{Amerio-unique}'.
\end{rem}

\section*{Acknowledgements}
We sincerely thank Professor Yong Li for his kind pointing out to us a mistake in an early version of the proof of
Theorem \ref{key}, and thank Professor Zhao Dong for his kind suggesting that we give sufficient conditions on the
existence of $L^2$-bounded solutions.

\end{document}